\def\R {\mathbb{R}}
\def\C {\mathcal{C}}
\newtheorem{theorem}{Theorem}
\newtheorem{definition}{Definition}[section]
\newtheorem{lemma}{Lemma}[section]
\newtheorem{proposition}{Proposition}[section]
\newtheorem{remark}{Remark}[section]
\newtheorem{example}{Example}[section]
\numberwithin{equation}{section}
\begin{document}

\title{Qualitative analysis of solutions \\ for a degenerate PDE model\\ of epidemic dynamics}

\author{
Marina Chugunova$^1$, Roman Taranets$^2$\\ and Nataliya Vasylyeva$^{3,4,5}$\\
{\small $^1\,$Claremont Graduate University, }\\
{\small 150 E. 10th Str., Claremont, California 91711, USA}\\
{\small \textup{\texttt{marina.chugunova@cgu.edu}}}
\\
{\small $^2\,$Institute of Applied Mathematics and Mechanics of  NASU,}\\
{\small G.Batyuka Str. 19, 84100, Sloviansk, Ukraine}\\
{\small \textup{\texttt{taranets\_r@yahoo.com}}}\\
{\small $^3\,$Institute of Applied Mathematics and Mechanics of NASU,}\\
{\small G.Batyuka Str. 19, 84100 Sloviansk, Ukraine}\\
{\small $^4\,$Institute of Hydromechanics of NASU,}\\
{\small Zhelyabova Str. 8/4, 03057 Kyiv, Ukraine}\\
{\small $^5\,$Dipartamento di Matematica, Politecnico di Milano}\\
{\small Via Bonardi 9, 20136 Milano, Italy}\\
{\small \textup{\texttt{nataliy\_v@yahoo.com}}}}

\maketitle

\begin{abstract}
Compartmental models are widely used in mathematical epidemiology to describe dynamics of infection disease. A new SIS-PDE model, recently derived by Chalub and Souza, is based on a diffusion-drift approximation of   probability density  in a well-known discrete -- time Markov chain SIS-DTMC model. This new SIS-PDE model is conservative due to degeneracy of the diffusion term at the origin. We analyze a class of degenerate PDE models  and obtain sufficient conditions  for existence of classical solutions with certain regularity properties. Also, we  show that under some additional assumptions about coefficients and initial data classical solutions vanish at the origin at any finite time. 
Vanishing at the origin of solutions is consistent with the conservation property of the model. 
 The main results of this article are: sufficient conditions for existence of weak solutions, analysis of  their asymptotic behavior at the origin, and the proof of existence of weak solutions convergent to Dirac delta function.   
 Moreover, we  study long-time behavior of solutions and confirm our analysis by numerical computations.
\end{abstract}

\section{ Introduction }

 Regardless of the modern improvement in development of highly efficient antibiotics and vaccines, infectious diseases still contribute significantly to deaths worldwide. While the earlier recognized diseases like cholera or the plague still
sometimes create problems in underdeveloped countries erupting occasionally in
epidemics, in the developed countries new diseases are emerging like AIDS (1981)
or hepatitis C and E (1989-1990).  New threads constantly appear like the recent bird
flu (SARS) epidemic in Asia, the very dangerous Ebola virus in Africa and worldwide spread of COVID-19. Overall, infectious diseases continue to be one of the most important health problems. Modeling of epidemiological phenomena has a very long history with the first model for smallpox formulated by Daniel Bernoulli in 1760. From the early twentieth century, in response to epidemics of various infectious diseases, a large number of models has been constructed and analyzed, see for example \cite{al1994, al2000, ba1975, di2013, ew2004, fe2005} (and references therein).

Compartmental models are a very general modeling technique. They are often applied to the mathematical modeling of infectious diseases. The population is assigned to compartments like: susceptible, infectious, and recovered in widely used SIR model
or to: susceptible, infective, and susceptible like in SIS epidemiological scheme. SIS scheme provides the simplest description of the
dynamics of a disease that is contact-transmitted, and that does not lead to immunity like for example COVID-19. Discrete-time Markov chain type  SIR and SIS models are considered to be a classical approaches in modern mathematical modeling in epidemiology. The most recent development in mathematical epidemiology is based on introduction of a continuous modeling based on partial differential equations SIR-PDE model like in \cite{CH2011,CH2014,CH2009a,CH2009b,CH2013}.

In our paper, for $T>0$ and $\Omega=(0,1),$ $\Omega_{T}=\Omega\times(0,T)$, we 
 consider the following equation (see \cite{CH2014}) in the unknown function $p:=p(x,t)$: $\bar{\Omega}_{T}\to\R$:
\begin{equation}\label{c-1}
\frac{\partial p}{\partial t} = \frac{1}{2N}\frac{\partial^{2}}{\partial x^{2}} ( f(x) p) - \frac{\partial }{\partial x}(g(x) p) \quad\text{ in }\quad \Omega_{T},
\end{equation}
coupled with the boundary condition
\begin{equation}\label{c-2}
\frac{1}{2N}\bigg[ (1-R_0) p(1,t) + \frac{\partial}{\partial x}p(1,t) \bigg] + p(1,t) = 0,  \quad t\in[0,T],
\end{equation}
and initial data
\begin{equation}\label{c-3}
  p(x,0) = p_0(x) \quad \text{in} \quad \bar{\Omega}.
\end{equation}
Addressing to the SIR-PDE model, we conclude that  $x \in \bar{\Omega}  $ is the fraction of infected, $N$ is a population of individuals,
$p $ is the probability to find a fraction $x$  at time $t$ in a population of size $N$,
$R_0 \geqslant 0$ is the basic reproductive factor,
$$
f(x) := x(R_0(1-x) +1), \ \  g(x) := x(R_0(1-x) - 1).
$$
This model is degenerate at $x = 0$ that is a boundary point of the domain. 


It is worth noting that processes defined by similar models were studied by Feller in the early 1950s and used to great effect by Kimura, et al. in the 1960s and 70s to give
quantitative answers to a wide range of questions in population genetics. Although, rigorous analysis of analytic properties of these equations is only now
 getting into the focus of applied mathematicians.  The study of initial or/and initial-boundary value problems for degenerate equations including Kimura-type operators has a long history.  We do not provide here a complete survey on the published results concerned to these degenerate equations, but rather present some of them. Indeed, the investigation of elliptic and parabolic problems to degenerate equations  containing the operators like
\[
\mathcal{L}:=a(x)\sum_{ij=1}^{n}a_{ij}(x)\frac{\partial^{2}}{\partial x_{i}\partial x_{j}}+\sum_{i=1}^{n}b_{i}(x)\frac{\partial}{\partial x_{i}}
\] 
with $a(x)\approx |x|^{\alpha},$ $\alpha>0,$ and $a_{ij}$ satisfying ellipticity conditions, are extensively studied by many authors with various analytical approaches (see e.g. \cite{BK,Fi,FP1,FP2,FP3,MP,NP,PPT1,PPT2,PT1,Pu,SV,Ve}) including stochastic calculus 
\cite{ABBP,BP,Ch,CE,EP2011,EM2,EP2,Ku,Po2}.

Under suitable assumptions on the asymptotic behavior of the operator's coefficients at the boundary of the domain, the uniqueness of bounded solutions was shown in \cite{Fi,PT1} without prescribing any boundary conditions  at the origin. 
 In \cite{PPT2}, taking advantage of appropriate super- and subsolutions, similar uniqueness results have been proved in the case of unbounded solution. In \cite{NP,Pu,Pu2}, under some conditions on the coefficients, 
uniqueness results for Cauchy problem to parabolic equation like $\frac{\partial u}{\partial t}-\mathcal{L}u=f$ in the bounded domain in suitable weighted $L^{2}$ spaces are established.

 Epstein et al. \cite{EP2011,EM2,EM3,EP1,EP2} (see also references therein) study the generalized Kimura operators (generalization $\mathcal{L}$ with $\alpha=1$) in the manifold setting. In particular, they discussed maximum principle and the Harnack inequality. The integral maximum principles in the whole $\R^{n}$ for solutions of degenerate parabolic equations are also obtained in \cite{AB}. As for probabilistic approaches, there are abundant theories on existence and uniqueness of solutions to stochastic differential equations with degenerate diffusion coefficients (see \cite{CE,EP2011,EM2,Et,Ku}), besides well-posedness of the related problems in the case of $\alpha=1$ are discussed in \cite{ABBP,BP,Ch}. It is worth noting that, degenerate diffusion is examined in the context of measure-valued process (see \cite{CC,DM,Pe}) as well as via the semigroup techniques \cite{BFR,GL,Po1}. As for well-posedness of parabolic degenerate problems, we quote \cite{BD,BK,De,FP1,FP3,Po2,PPT1,SV,Ve}, where existence of weak and classical solutions is established for different value of $\alpha>0$. In particular, if $\alpha=1$ the solvability of Cauchy problem in semi-space $\R^{n}_{+,T}$ in the weighted H\"{o}lder classes with boundary and without boundary condition on the part of boundary are discussed \cite{BK, EP2011, Po1}, while the case of $\alpha>1$ is analyzed in \cite{CHEN2008,De,Ve}. 
Previous researchers like \cite{CHEN2008,EP2011} restricted their attention to the
solutions with the best possible regularity properties, which leads to considerable simplifications. For real applications it is important to consider
solutions with more complicated behavior that is the goal of our paper.

Outline of the paper is as follows: section 2 contains the main notation and functional setting; in section 3, we show existence of stationary solutions, confirm numerically their meta-stability and analyze convergence; in section 4, we analyze particular classical and weak solutions; in section 5, we study a local asymptotic of solutions at the origin and discuss sufficient conditions which provide the fulfillment of the conservation law.


\section{Functional setting and notations}
\label{s2}
\noindent
Throughout this work, the symbol $C$ will denote a generic positive constant, depending only on the structural quantities of the problem. We will carry out our analysis in the framework of the weighted H\"{o}lder and Sobolev spaces. To this end, throughout the paper, let
\[
\alpha\in(0,1)\quad\text{and}\quad s\in\R
\]
be arbitrarily fixed.

\noindent For any non-negative integer $k$, and any Banach space $(\mathbf{X},\|\cdot\|_{\mathbf{X}}),$ and any $p\geq 1$, we consider spaces
$$
\C([0,T],\mathbf{X}),\quad L^{p}([0,T],\mathbf{X}),\quad \C^{k+\alpha,\frac{k+\alpha}{2}}(\bar{\Omega}_{T}).
$$
The last class so-called the parabolic H\"{o}lder space has been used by several authors, and its definition and properties can be found, for instance, in \cite[(1.10)-(1.12)]{LSU}. Along the paper, we will also encounter the usual spaces $\C^{k+\alpha}(\bar{\Omega}),$ $W^{1,p}(\Omega)$,  $L^{p}(\Omega)$ and $L_{\omega}^2(\Omega)$. It is worth noting that the last class is a weighted space $L^{2}$ with a  weight $\omega$  and induced norm
$$
\|v\|_{L_{\omega}^2(\Omega)}= \bigg(\int \limits_{\Omega}\omega(x)v^2(x)\,dx  \bigg)^{1/2} . 
$$
Moreover, we will use the notations $H^{1}(\Omega)$ and $H_{0}^{1}(\Omega)$ for $W^{1,2}(\Omega)$ and $W_{0}^{1,2}(\Omega)$, respectively.

Let $d(x)$ be the distance from any point $x\in\bar{\Omega}\subset\R$ to the origin and set
\[
\underline{d}=\min\{d(x),d(\bar{x})\},\quad \bar{d}=\max\{d(x),d(\bar{x})\},\quad x,\bar{x}\in\bar{\Omega},
\]
\[
d=\begin{cases}
\underline{d}\quad\text{for}\quad s\geq 0,\\
\bar{d} \quad\text{for}\quad s<0.
\end{cases}
\]
Denoting for any $\gamma\in(0,1)$,  and $x\neq\bar{x},$ $t_1\neq t_2$
\begin{align*}
\langle v\rangle_{x,s,\Omega}^{(\gamma)}&=\underset{\bar{\Omega}}{\sup}\, d^{-s}\underline{d}^{\gamma}\frac{|v(x)-v(\bar{x})|}{|x-\bar{x}|^{\gamma}},\\
\langle v\rangle_{x,s,\Omega_T}^{(\gamma)}&=\underset{\bar{\Omega}_{T}}{\sup}\, d^{-s}\underline{d}^{\gamma}\frac{|v(x,t)-v(\bar{x},t)|}{|x-\bar{x}|^{\gamma}},\\
\langle v\rangle_{t,s,\Omega_T}^{(\gamma)}&=\underset{\bar{\Omega}_T}{\sup}\, d^{-s}\frac{|v(x,t_1)-v(x,t_2)|}{|t_1-t_2|^{\gamma}},
\end{align*}
we give the following definition:
\begin{definition}\label{d2.1}
Functions $v=v(x)$ and $w=w(x,t)$ belong to the spaces $\C^{k+\alpha}_{\alpha}(\bar{\Omega})$ and $\C_{\alpha}^{k+\alpha,\frac{k+\alpha}{2}}(\bar{\Omega}_{T})$, for $k=0,1,2,$ if the norms here below are finite:
\begin{align*}
\|v\|_{\C_{\alpha}^{k+\alpha}(\bar{\Omega})}&=\sum_{j=0}^{k}\Big[\Big\|d^{\lfloor j/2\rfloor }(x)\frac{\partial^{j} v}{\partial x^{j}}\Big\|_{\C(\bar{\Omega})}
+\Big\langle d^{\lfloor j/2\rfloor }(x)\frac{\partial^{j} v}{\partial x^{j}}\Big\rangle_{x,\alpha/2,\Omega}^{(\alpha)}
\Big],\\
\|w\|_{\C_{\alpha}^{k+\alpha,\frac{k+\alpha}{2}}(\bar{\Omega}_T)}&=\|w\|_{\C([0,T],\C^{k+\alpha}_{\alpha}(\bar{\Omega}))}+
\sum_{j=0}^{k}
\Big\langle \frac{\partial^{j} w}{\partial x^{j}}\Big\rangle_{t,0,\Omega_T}^{(\frac{k+\alpha-j}{2})},\quad k=0,1,
\\
\|w\|_{\C_{\alpha}^{2+\alpha,\frac{2+\alpha}{2}}(\bar{\Omega}_T)}&=\|w\|_{\C([0,T],\C^{2+\alpha}_{\alpha}(\bar{\Omega}))}+
\|\partial w/\partial t\|_{\C^{\alpha,\frac{\alpha}{2}}_{\alpha}(\bar{\Omega}_{T})}\\
&+
\Big\langle \frac{\partial w}{\partial x}\Big\rangle_{t,0,\Omega_T}^{(\frac{\alpha}{2})}
+
\sum_{j=1}^{2}
\Big\langle d(x)\frac{\partial^{j} w}{\partial x^{j}}\Big\rangle_{t,0,\Omega_T}^{(\frac{2+\alpha-j}{2})},
\end{align*}
where $\lfloor j \rfloor $ denotes the floor function of $j$ (i.e. the greatest integer less than or equal to $j$).
\end{definition}

It is apparent that in any domain $\Omega_{\varepsilon}=(\varepsilon,l)\subset\Omega$, the spaces $\C_{\alpha}^{k+\alpha}(\bar{\Omega}_{\varepsilon})$ and $\C^{k+\alpha,\frac{k+\alpha}{2}}_{\alpha}(\bar{\Omega}_{\varepsilon,T})$ coincide with the usual H\"{o}lder $\C^{k+\alpha}(\bar{\Omega}_{\varepsilon})$  and the parabolic H\"{o}lder spaces
 $\C^{k+\alpha,\frac{k+\alpha}{2}}(\bar{\Omega}_{\varepsilon,T})$.

Moreover, in our analysis,   the following weighted H\"{o}lder spaces are needed.
\begin{definition}\label{d2.2}
Functions $V:=V(x)$ and $W:=W(x,t)$ belong to the spaces $E_{s}^{k+\alpha}(\bar{\Omega})$ and
$E_{s}^{k+\alpha,\frac{k+\alpha}{2}}(\bar{\Omega}_{T})$, for $k=0,1,2,$ if the norms here below are finite:
\begin{align*}
\|V\|_{E_{s}^{k+\alpha}(\bar{\Omega})}&=\sum_{j=0}^{k}\Big[\Big\|d^{-s+j}(x)\frac{\partial^{j}V}{\partial x^{j}}\Big\|_{\C(\bar{\Omega})}
+\Big\langle\frac{\partial^{j}V}{\partial x^{j}}\Big\rangle_{x,s-j,\Omega}^{(\alpha)}
\Big],\\
\|W\|_{E_{s}^{k+\alpha,\frac{k+\alpha}{2}}(\bar{\Omega}_{T})}&=\|W\|_{\C([0,T],E_{s}^{k+\alpha}(\bar{\Omega}))}
+
\sum_{j=0}^{k}
\Big\langle\frac{\partial^{j}W}{\partial x^{j}}\Big\rangle_{t,s-j,\Omega_{T}}^{(\frac{k+\alpha-j}{2})},
\end{align*}
for $k=0,1$, while for $k=2$
\begin{align*}
\|W\|_{E_{s}^{2+\alpha,\frac{2+\alpha}{2}}(\bar{\Omega}_{T})}&=\|W\|_{\C([0,T],E_{s}^{2+\alpha}(\bar{\Omega}))}
+\|\partial W/\partial t\|_{E_{s}^{\alpha,\frac{\alpha}{2}}(\bar{\Omega}_{T})}\\&
+
\sum_{j=1}^{2}
\Big\langle \frac{\partial^{j}W}{\partial x^{j}}\Big\rangle_{t,s-j,\Omega_{T}}^{(\frac{2+\alpha-j}{2})}.
\end{align*}
\end{definition}
 \begin{definition}\label{d2.3}
For $k=0,1,2,$ we define $\C_{\alpha,0}^{k+\alpha,\frac{k+\alpha}{2}}(\bar{\Omega}_{T})$ to be the space consisting of those functions $w\in\C_{\alpha}^{k+\alpha,\frac{k+\alpha}{2}}(\bar{\Omega}_{T})$, satisfying the zero initial conditions
\[
w|_{t=0}=0\quad\text{and}\quad \frac{\partial w}{\partial t}|_{t=0}=0,\quad m=0,...,\lfloor k/2\rfloor.
\]
\end{definition}
In a similar manner  we introduce the spaces: $\C_{\alpha}^{k+\alpha,\frac{k+\alpha}{2}}(\partial\Omega_{T})$ and
$\C_{\alpha,0}^{k+\alpha,\frac{k+\alpha}{2}}(\partial\Omega_{T})$ for $k=0,1,2$.


\section{Weak solutions: convergence to steady state and asymptotics as $T\to+\infty$}

In this section, as it is mentioned in Introduction, we discuss the long time behavior of a weak solution to problem \eqref{c-1}-\eqref{c-3}. To this end, we first construct the explicit stationary solution $P_{s}:=P_{s}(x):\bar{\Omega}\to\R$ related to \eqref{c-1}-\eqref{c-3}, and when we examine the given data which provides the convergence of the weak solution as $T\to+\infty$. In particular, we consider the case of convergence $p(x,t)$ to $P_{s}(x)$.

\subsection{Existence of a steady state}
\begin{figure}
     \includegraphics[width = 0.49\textwidth]{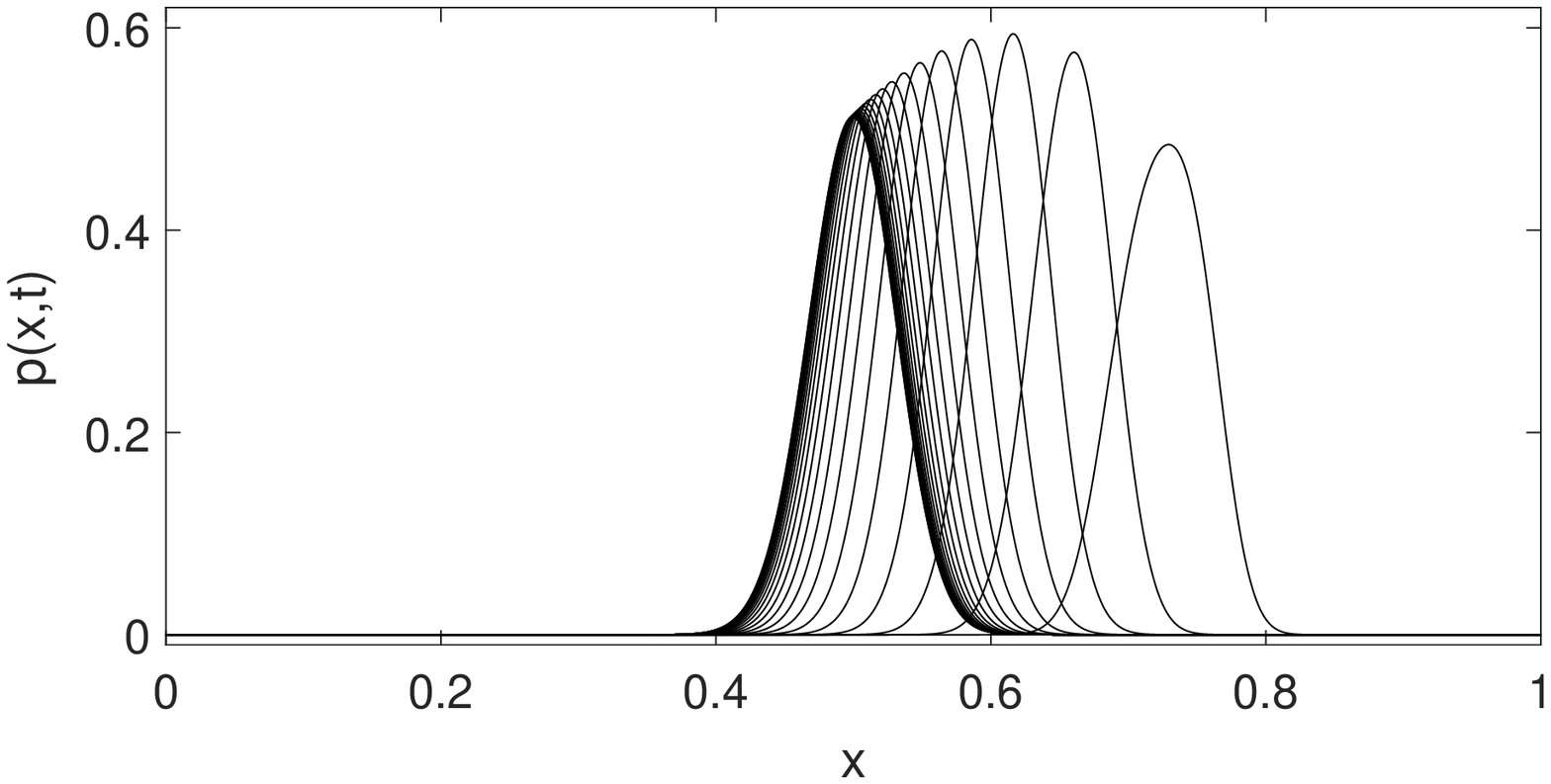}
    \includegraphics[width = 0.49\textwidth]{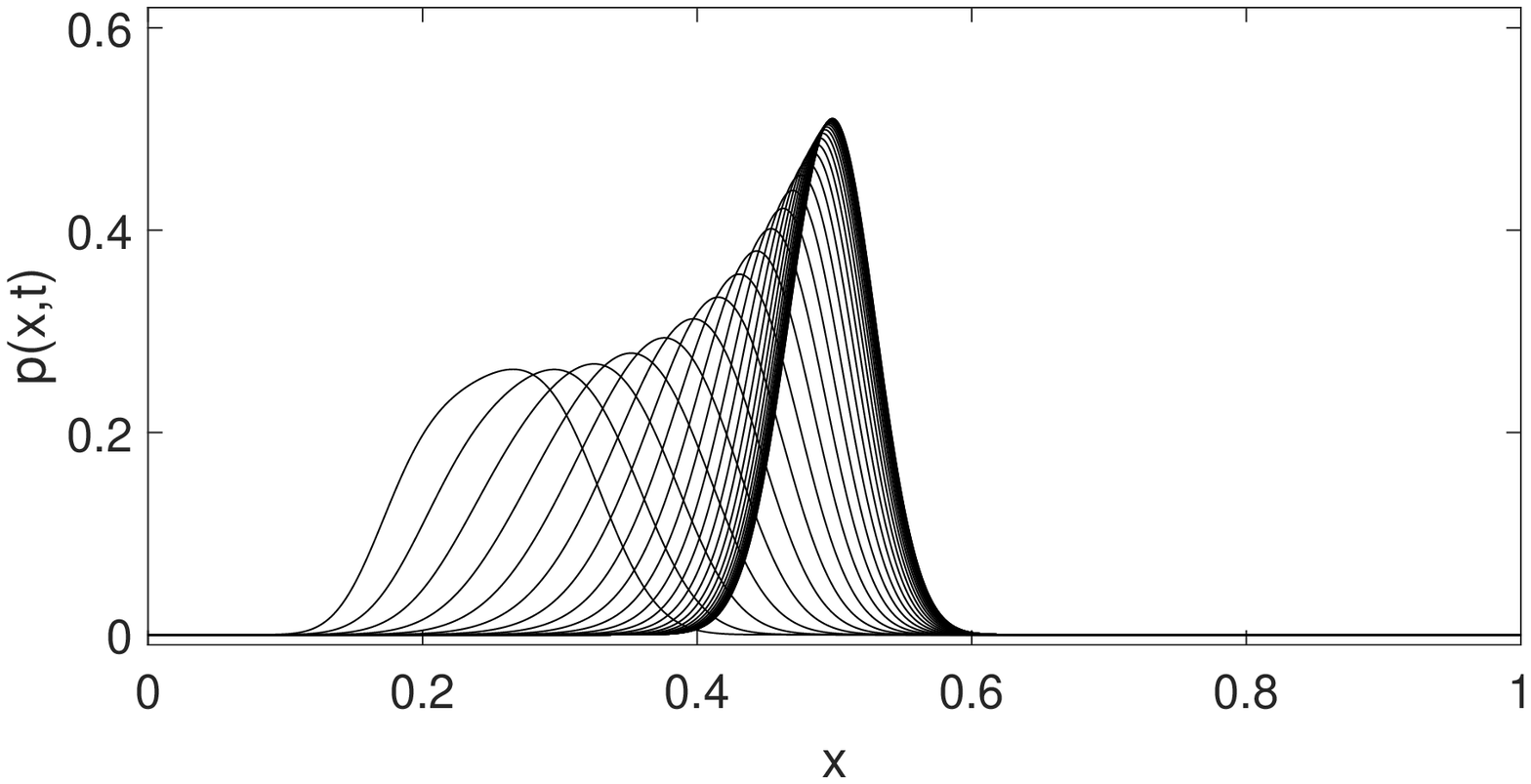}
    \caption{These two pictures illustrate the dominant behavior of convection for short times. (Left) convection moves solution toward the steady state from the right side to the left one for  $R_0 = 2$ and $N = 200$, (Right) convection moves solution toward the steady state from the left side to the right one for the same parameter values.}
    \label{steadyr1a}
\end{figure}

\begin{figure}
     \includegraphics[width = 0.49\textwidth]{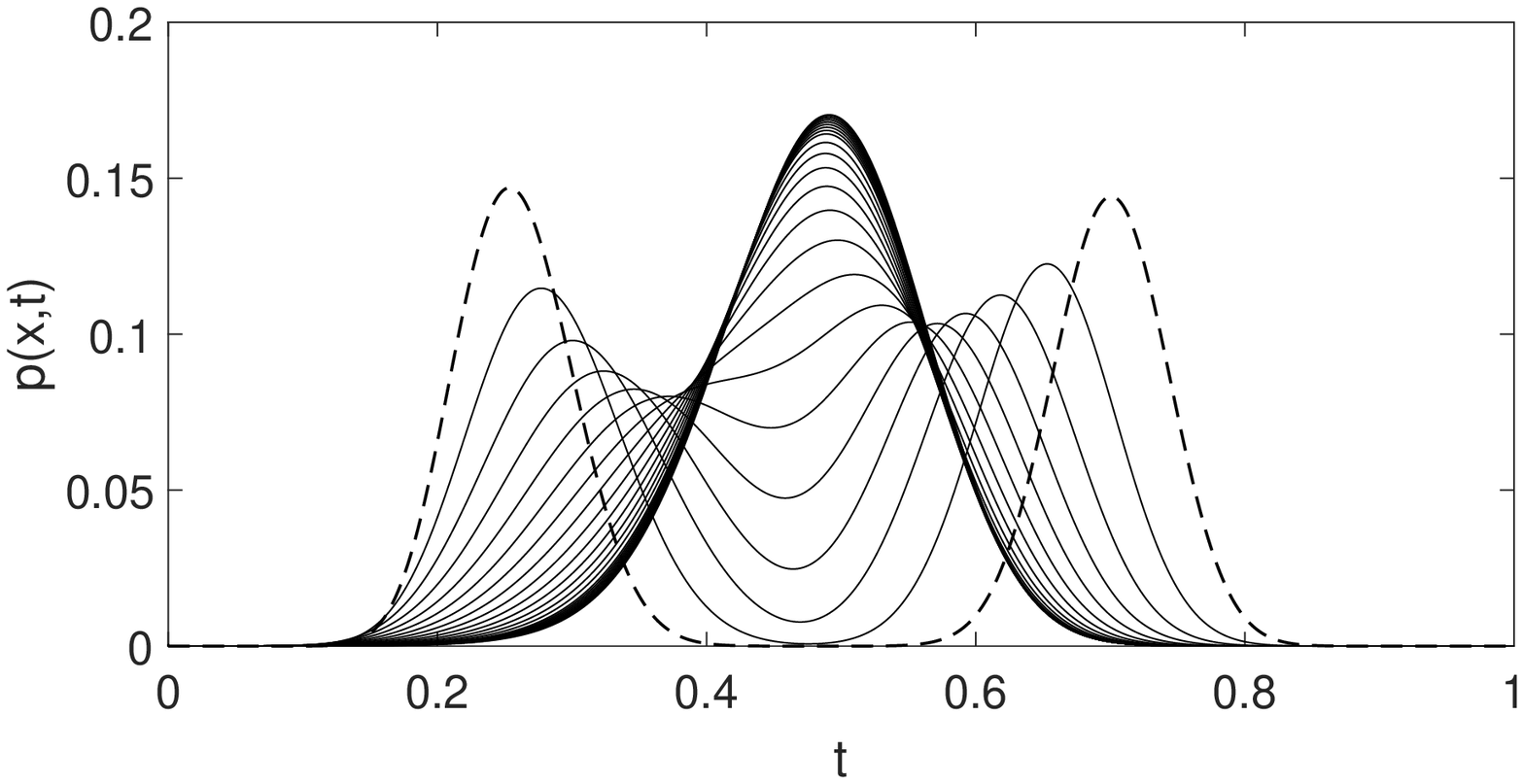}
    \includegraphics[width = 0.49\textwidth]{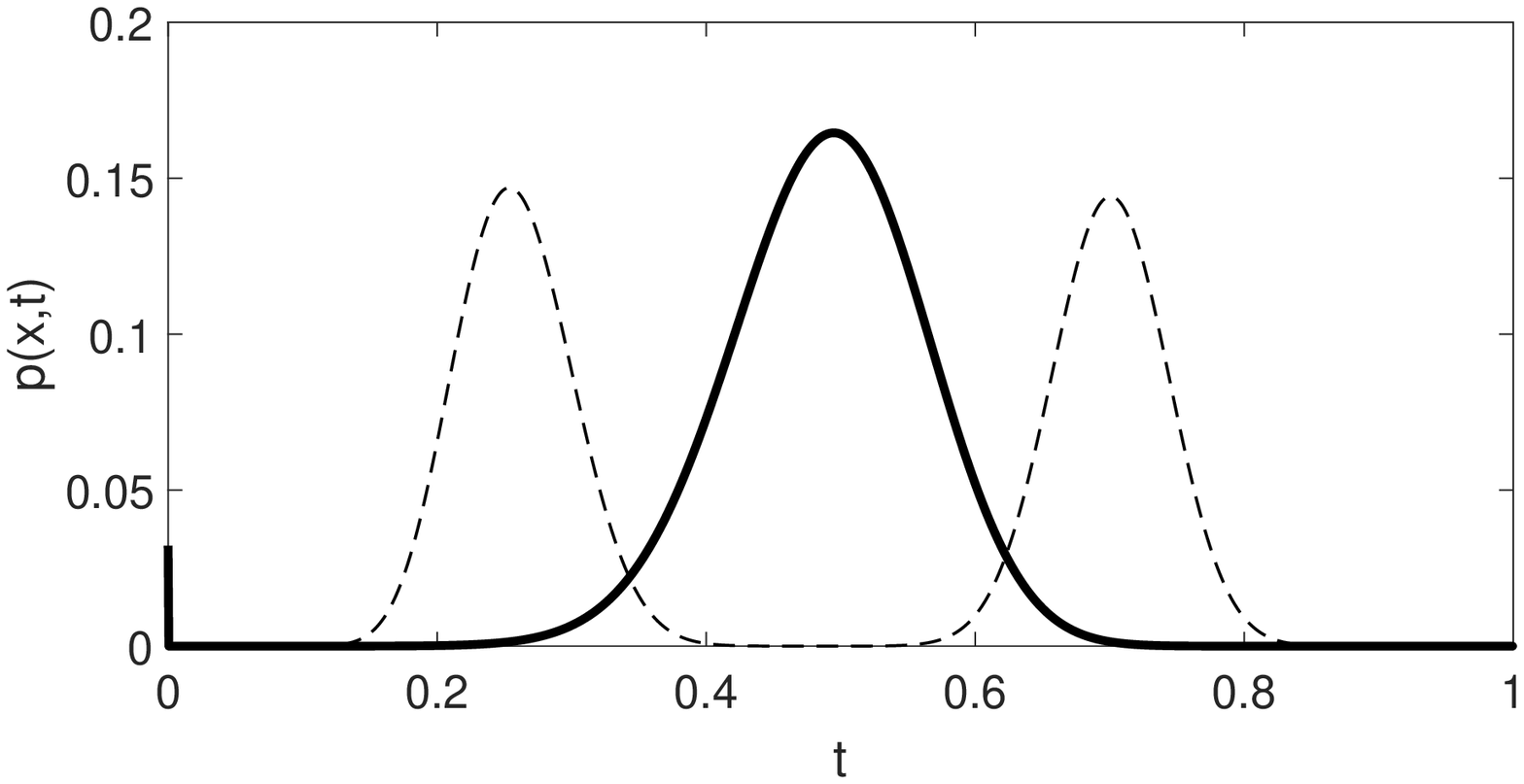}
    \caption{These two pictures illustrate short-time dynamics for $R_0 = 0$ and $N = 100$ (on the left) and long time with blow up at the origin (on the right). The initial data are plotted with a dashed line.}
    \label{steadyr3a}
\end{figure}

\begin{figure}
     \includegraphics[width = 0.49\textwidth]{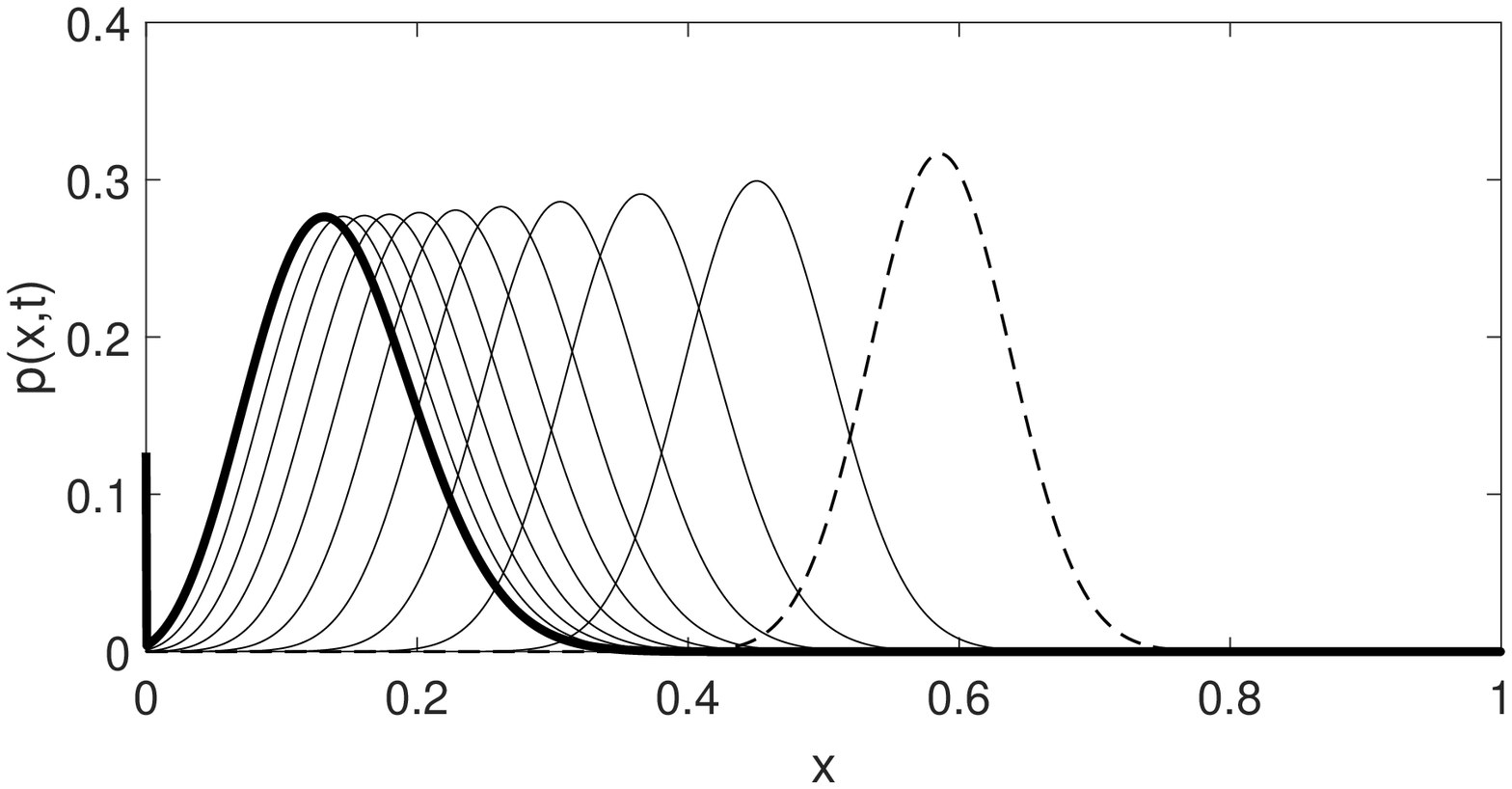}
    \includegraphics[width = 0.49\textwidth]{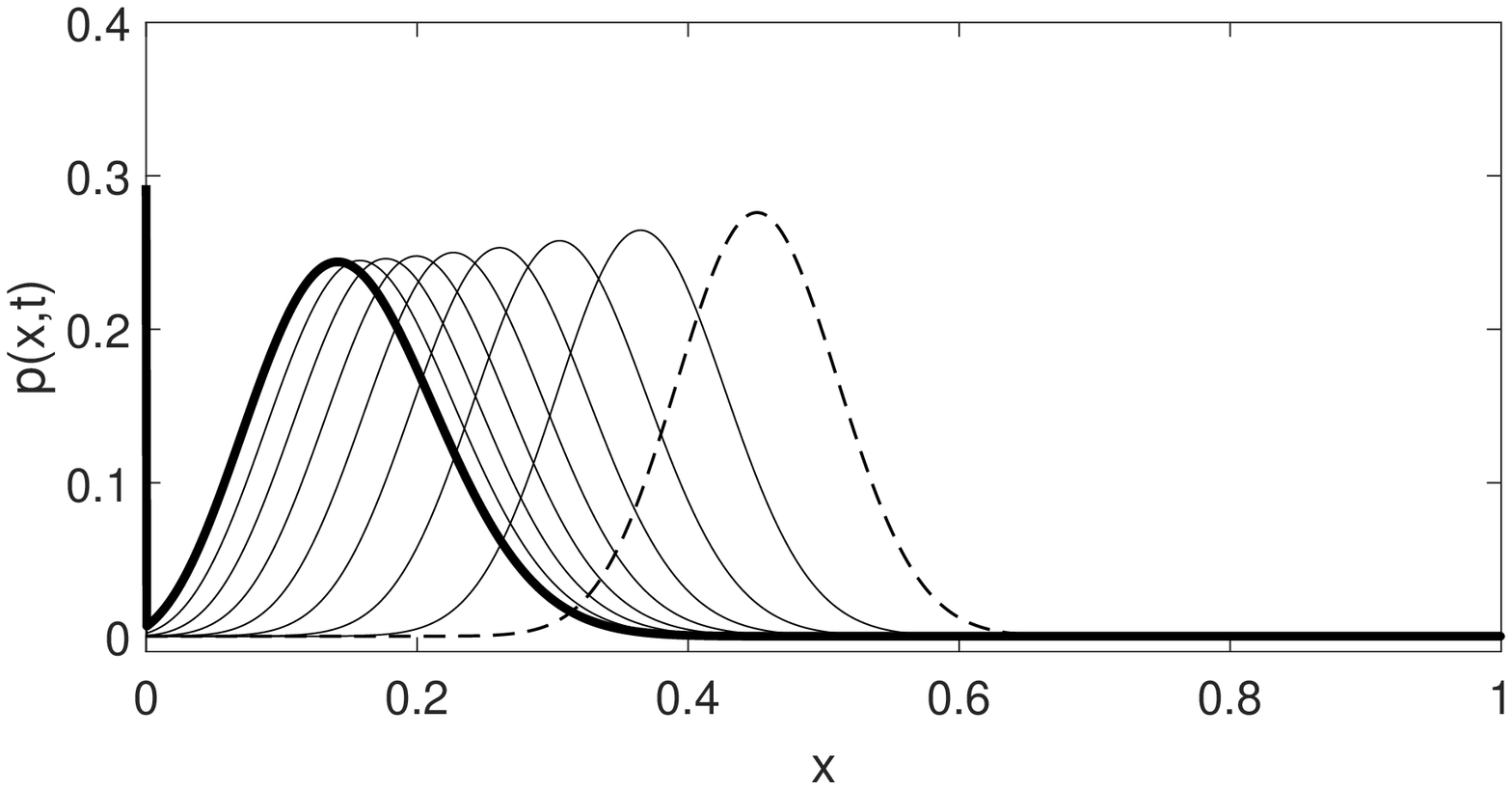}
    \caption{These two pictures illustrate the dominant behavior of convection for short times. (Left) convection moves solution toward the origin $R_0 = 0.5$ and $N = 100$, where solution blows up. (Right) convection again moves solution toward the origin  $R_0 = 1$ and $N = 100$, where it blows up.}
    \label{steadyr2a}
\end{figure}

\begin{figure}
     \includegraphics[width = 0.49\textwidth]{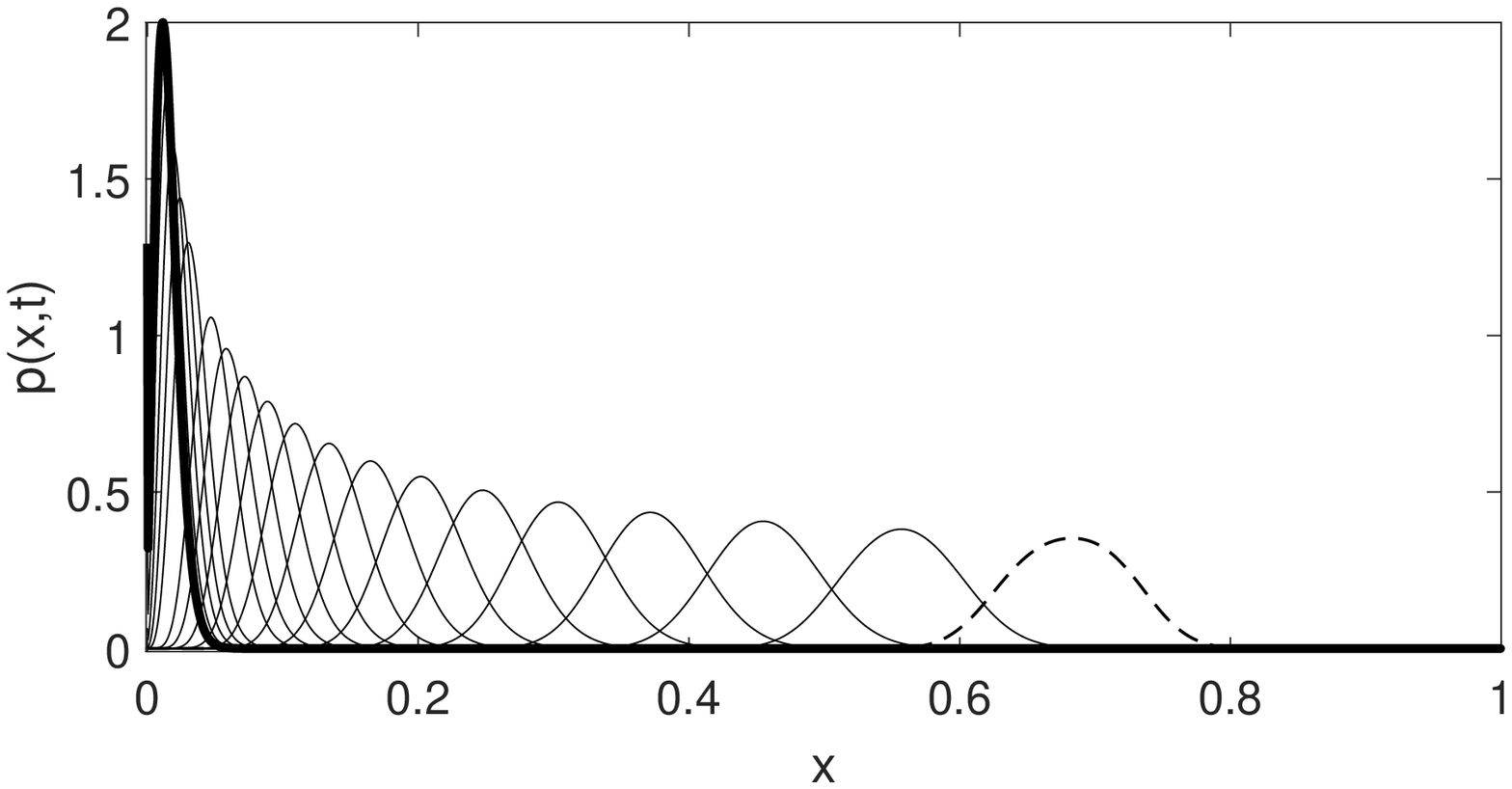}
    \includegraphics[width = 0.49\textwidth]{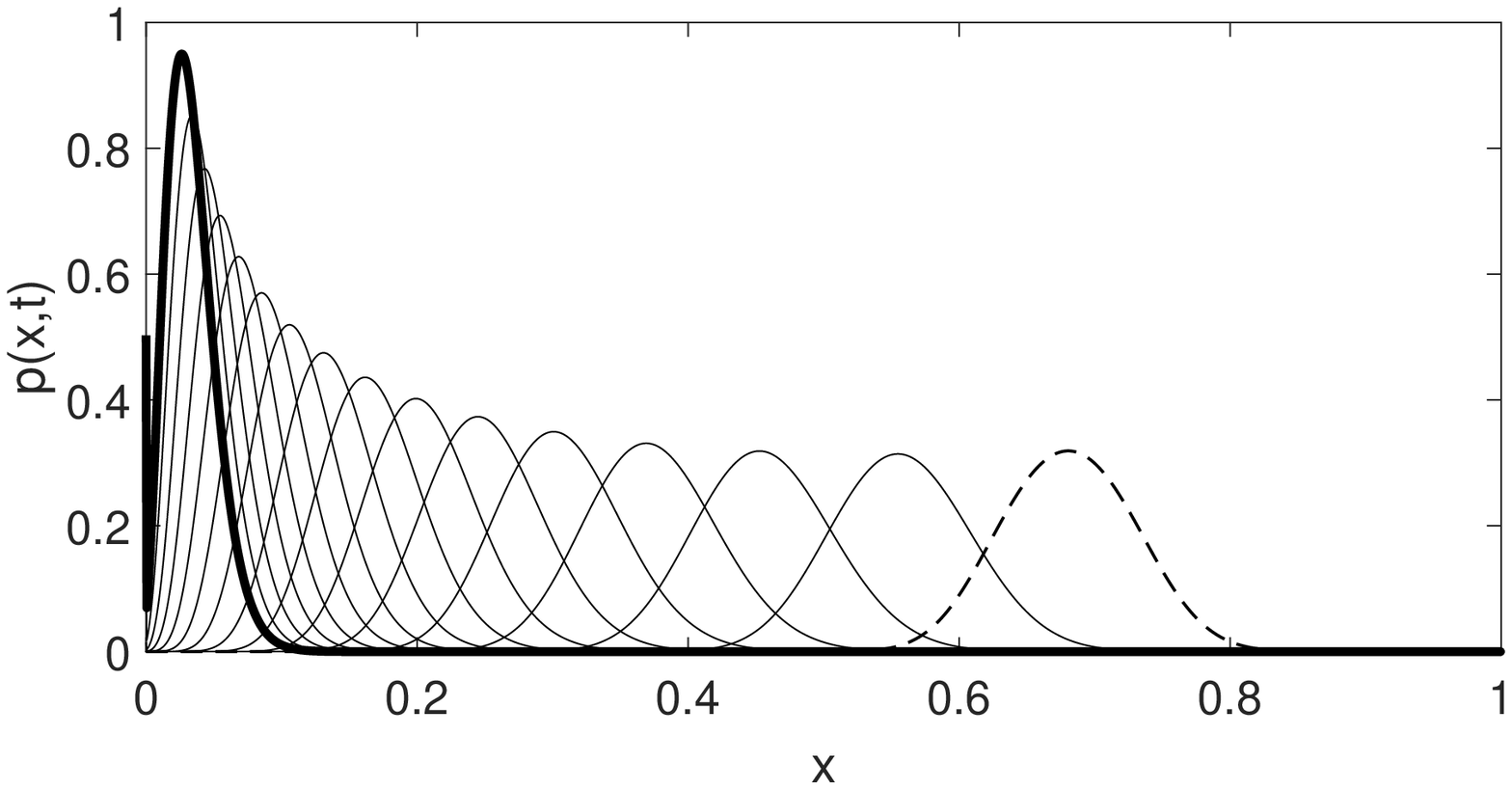}
    \caption{These two pictures illustrate the dominant behavior of convection for short times. (Left) convection moves solution toward the origin $R_0 = 0$ and $N = 100$, where solution blows up. (Right) convection again moves solution toward the origin  $R_0 = 0$ and $N = 200$, where it blows up.}
    \label{steadyr4a}
\end{figure}

First, we will start with getting an analytical formula for a stationary solution for (\ref{c-1}):
\begin{equation}\label{ss-1}
\frac{1}{2N}\frac{\partial}{\partial x}\bigl( \frac{\partial}{\partial x}(f(x)P_{s}) - 2N g(x)P_{s}  \bigr)= 0\quad\text{in}\quad \Omega,
\end{equation}
coupled with the boundary condition:
\begin{equation}\label{ss-2}
  \frac{\partial}{\partial x}P_s(1) = -(2N -R_0 +1)P_{s}(1)   .  
\end{equation}
Integrating (\ref{ss-1}) in $x$ and taking into account (\ref{ss-2}), we have
$$
\frac{\partial}{\partial x}(f(x) P_{s}) = 2N  g(x) P_{s}  .
$$
It is apparent that this equation has a general solution
\begin{equation}\label{rt-00}
f(x) P_s(x)  = \mathcal{C} \, F (x)  ,
\end{equation}
where we put
$$
F (x) := e^{ 2N \int \limits_0^x { \tfrac{g(s)}{f (s)}ds} },
\ \ \mathcal{C} := \mathop {\lim} \limits_{x \to 0} f(x) p(x).
$$
As a result, we obtain the explicit form of the classical stationary solution to \eqref{c-1}-\eqref{c-3}
\begin{equation}\label{rt-03}
P_s(x) =  \tfrac{\mathcal{C}}{\omega (x) } \text{ where }  \omega (x) := \tfrac{f(x)}{F(x)}.
\end{equation}
Hence, the changing sign convection term for $R_0 = 2$ equals to zero at $x = 0.5$ a wave-like solution moves toward this point forming a meta-stable steady state shape. The illustration that the solution short-time behavior is driven by the convection, as can be seen in Figures \ref{steadyr1a}--\ref{steadyr3a}. It takes a long time for a meta-stable steady state to move mass toward the origin, this long-time dynamics is due to a slow diffusion effect and eventually the solution eventually blows-up at the origin. That, for two different sets  of parameter values, as can be seen in Figures \ref{steadyr2a}--\ref{steadyr4a}. All numerical simulations show high accuracy of mass conservation property even for long time computations that
suggests an existence a delta function type weak solution which acts as a global attractor in this dynamical system.



\subsection{Long-time behavior of a weak solution}

Assuming that $\omega(x)$ is defined with \eqref{rt-03} and 
\[
N \geqslant 1,\quad R_0 \geqslant 0\quad \text{and}\quad 0 \leqslant p_0(x) \in L_{\omega}^2(\Omega),
\]
(see Section \ref{s2} as to the definition of $L_{\omega}^2(\Omega)$), we define a weak solution of \eqref{c-1}-\eqref{c-3} in the following sense.
\begin{definition}\label{d.bis1}
A non-negative function $p(x,t) \in L^{\infty}( 0,T ; L_{\omega}^2(\Omega))$ is a weak solution of  problem (\ref{c-1})--(\ref{c-3}) if
$$
p_t  \in L^2(0,T; H^{-1}(\Omega)), \ \ ( \omega(x) p)_x \in L^2(Q_T),
$$
and satisfies (\ref{c-1}) in the sense
$$
\int \limits_0^T { \Big\langle  \frac{\partial p}{\partial t}, \psi \Big\rangle_{H^1,H^{-1}}   \,  dt} +  \\
\iint \limits_{Q_T} {\bigl( \frac{1}{2N} \frac{\partial f p}{\partial x}   - g p  \bigr) \frac{\partial\psi}{\partial x } \,dx dt}  = 0
$$
for all $\psi \in L^2 (0,T;  H_0^1(\Omega) )$. Here $\langle  u,v \rangle_{H^1,H^{-1}}$ is a dual pair of  elements   $u \in H^1$  and $ v \in  H^{-1}$.
\end{definition}
Now we are ready to state our first main result related to 
 asymptotic behavior of a weak solution to \eqref{c-1}-\eqref{c-3}.
\begin{theorem}\label{Th-d}
(i) Let $0 \leqslant p_0(x) \in L_{\omega}^2(\Omega)$ and $ \mathop {\lim} \limits_{x \to 0} \omega(x) p(x,t) = 0 $
then  a weak solution $p(x,t)$ satisfies the relation
$$
\omega^{\frac{1}{2}}(x) p(x,t) \to 0 \text{ strongly in } L^2(\Omega)  \text{ as } t \to +\infty.
$$
Moreover, if $(\omega(x) p_0(x))_x \in L^2(\Omega) $ then there is convergence
\begin{equation}\label{con-00}
\omega(x) p(x,t) \to 0 \text{ strongly  in } W^{1,2}(\Omega) \text{ as } t \to +\infty.
\end{equation}
(ii)  Let $ \omega^{\frac{1}{2}}(x) p_0(x) \in L_{\omega}^2(\Omega)$ and $ \mathop {\lim} \limits_{x \to 0} \omega(x) p(x,t) = \mathcal{C}  > 0 $
then for a weak solution $p(x,t)$ we have
\begin{equation}\label{con-01}
\omega (x) p(x,t) \to \mathcal{C}  \text{ strongly in } L^2(\Omega)  \text{ as } t \to +\infty.
\end{equation}
Moreover, if $\omega(x)(\omega(x) p_0(x))_x \in L^2(\Omega) $ then
$$
\omega(x)\frac{\partial}{\partial x}(\omega(x) p(x,t)) \to 0 \text{ strongly  in } L^2(\Omega) \text{ as } t \to +\infty.
$$
\end{theorem}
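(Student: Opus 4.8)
The plan is to pass to the new unknown $u:=\omega(x)\,p(x,t)$, whose stationary states are precisely the constants, and then to run two successive energy estimates. First I would record two structural facts about the weight. Since $g/f=\frac{R_0(1-x)-1}{R_0(1-x)+1}$ is bounded on $[0,1]$, the exponential $F$ is bounded above and below by positive constants, while $\omega=f/F$ vanishes linearly at the origin, $\omega(x)\sim x$ as $x\to0$, and is strictly positive on $(0,1]$. A direct computation using $F_x=2N\frac{g}{f}F$ then turns \eqref{c-1} into the two equivalent forms
\begin{equation*}
\frac{1}{\omega}u_t=\frac{1}{2N}(Fu_x)_x\qquad\Longleftrightarrow\qquad u_t=\frac{f}{2N}u_{xx}+gu_x,
\end{equation*}
so that the flux appearing in Definition \ref{d.bis1} is $J:=\frac{1}{2N}(fp)_x-gp=\frac{F}{2N}u_x$. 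Evaluating $f,f_x,g$ at $x=1$ shows that \eqref{c-2} is exactly the no-flux condition $J(1,t)=0$, equivalently $u_x(1,t)=0$, while the hypotheses on $\lim_{x\to0}\omega p$ prescribe the boundary value $u(0,t)=\mathcal{C}$ (with $\mathcal{C}=0$ in part (i)).

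For the $L^2$-statements I would test the equation against $\psi=u-\mathcal{C}$. This is admissible even though $\psi\notin H_0^1(\Omega)$, because the boundary term it produces at $x=1$ equals $J(1,t)\,(u-\mathcal{C})(1,t)=0$ by \eqref{c-2}, and the one at $x=0$ vanishes since $(u-\mathcal{C})(0,t)=0$; rigorously one argues on $(\varepsilon,1)$ and lets $\varepsilon\to0$. This yields the energy identity
\begin{equation*}
\frac{d}{dt}\,\frac12\int_\Omega\frac{(u-\mathcal{C})^2}{\omega}\,dx=-\frac{1}{2N}\int_\Omega F u_x^2\,dx\le0.
\end{equation*}
Because $u-\mathcal{C}$ vanishes at the origin and $\omega(x)\gtrsim x$ there, a one-dimensional Hardy inequality gives $\int\frac{(u-\mathcal{C})^2}{\omega}\le C\int u_x^2\le \frac{C}{\inf_\Omega F}\int F u_x^2$, so the energy is controlled by a constant multiple of its own dissipation; Gronwall then forces exponential decay of $\int\frac{(u-\mathcal{C})^2}{\omega}$. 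Since $\omega$ is bounded, this dominates $\int(u-\mathcal{C})^2$ and proves $\omega^{1/2}p\to0$ in part (i) and the convergence \eqref{con-01} in part (ii).

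For the gradient convergence in part (i) I would differentiate the first estimate one further step. Testing against $u_t$ (using $u_x(1,t)=0$ and $u_t(0,t)=0$) produces
\begin{equation*}
\frac{d}{dt}\,\frac{1}{4N}\int_\Omega F u_x^2\,dx=-\int_\Omega\frac{u_t^2}{\omega}\,dx\le0,
\end{equation*}
so $G(t):=\frac{1}{4N}\int_\Omega Fu_x^2$ is nonincreasing; as its time integral is finite (it coincides, up to a constant, with the total dissipation of the previous step), a nonincreasing nonnegative integrable function must tend to $0$. Hence $\int u_x^2\to0$, and together with the $L^2$-decay this gives $\omega p\to0$ in $W^{1,2}$. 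The assumption $(\omega p_0)_x\in L^2(\Omega)$ is exactly what renders $G(0)$ finite.

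The delicate point is the weighted gradient estimate of part (ii), where $u(0,t)=\mathcal{C}\neq0$ forces one to work with the degenerate functional $\Phi(t):=\frac12\int_\Omega\omega^2 u_x^2\,dx$ (finite at $t=0$ precisely under the hypothesis $\omega(\omega p_0)_x\in L^2$), because $u_x$ itself need not be square-integrable near the origin. I would compute $\Phi'$ by testing the non-divergence form against $-(\omega^2u_x)_x$; this produces a good negative term $-\frac{1}{2N}\int f\omega^2 u_{xx}^2$ together with lower-order contributions carrying a weaker weight ($\sim\int x\,u_x^2$ near $x=0$). The main obstacle is to absorb those weaker-weighted terms, and I expect to do so either by interpolating between the good dissipation term and the already-established $L^2$-decay of $u-\mathcal{C}$, or, more cleanly, by applying the first energy estimate to $z:=u_t$ (which solves the same boundary value problem, since $z(0,t)=0$ and $z_x(1,t)=0$, so $\int\frac{u_t^2}{\omega}\to0$) and then recovering $\omega u_x$ from the elliptic identity $(Fu_x)_x=\frac{2N}{\omega}u_t$ with $u_x(1,t)=0$, which yields $\int\omega^2 u_x^2\le C\int\frac{u_t^2}{\omega}\to0$. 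Throughout, the recurring technical burden is the rigorous justification of all these identities for weak solutions at the degenerate endpoint $x=0$: approximation on $(\varepsilon,1)$, control of the boundary terms as $\varepsilon\to0$, and use of parabolic smoothing for $t>0$.
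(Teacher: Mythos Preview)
Your treatment of part (i) is essentially the paper's: you make the same substitution $z=\omega p$, derive the same divergence-form equation $\omega^{-1}z_t=\frac{1}{2N}(Fz_x)_x$ with the mixed boundary conditions, and obtain exponential decay of $\int\omega^{-1}z^2$ via Hardy and Gronwall. For the gradient in (i) you test against $u_t$ and exploit monotonicity plus integrability of $\int Fz_x^2$, whereas the paper tests against $-\omega(Fz_x)_x$ and closes directly by a weighted Poincar\'e inequality on $v=Fz_x$ (which vanishes at $x=1$); both arguments are valid, the paper's gives an explicit exponential rate.

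There is, however, a genuine gap in your handling of part (ii). The hypothesis there is only $\omega^{1/2}p_0\in L^2_\omega(\Omega)$, i.e.\ $z_0=\omega p_0\in L^2(\Omega)$, and this does \emph{not} guarantee that your energy
\[
\int_\Omega \frac{(z_0-\mathcal{C})^2}{\omega}\,dx\;\sim\;\int_0^1\frac{(z_0(x)-\mathcal{C})^2}{x}\,dx
\]
is finite: knowing only that $z_0\in L^2$ and $z_0(0)=\mathcal{C}$ gives no rate of vanishing of $z_0-\mathcal{C}$ at the origin. Your Gronwall step therefore starts from a possibly infinite quantity. The paper circumvents this by multiplying not by $\tilde z=z-\mathcal{C}$ but by $\omega(x)\psi(x)\tilde z$ with the explicit
\[
\psi(x)=\omega^{-1}(x)\int_0^x\frac{dy}{F(y)}\;\longrightarrow\;\frac{1}{1+R_0}\quad(x\to0),
\]
which is \emph{bounded}; the resulting energy $\int\psi\,\tilde z^2$ is then controlled by $\|z_0-\mathcal{C}\|_{L^2}^2<\infty$, and the choice of $\psi$ is exactly what makes the cross term $\frac{1}{4N}\int(F(\omega\psi)')'\tilde z^2$ vanish so that the identity closes with a sign. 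The same issue propagates to your sketch for the weighted gradient in (ii): applying the part-(i) mechanism to $u_t$ again presupposes $\int\omega^{-1}u_t^2|_{t=0}<\infty$, which needs two derivatives of $z_0$. The paper instead introduces a second weight $\phi(x)\sim\frac{x^2}{2(R_0+1)}$, chosen so that the commutator term produced when testing against $-\omega\phi(Fz_x)_x$ matches exactly the dissipation of the $\psi$-identity; adding the two identities yields a closed differential inequality for $\int(\phi F z_x^2+\psi z^2)$ requiring only $\omega z_{0,x}\in L^2$, which is the stated hypothesis.
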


\begin{figure}
     \includegraphics[width = 0.49\textwidth]{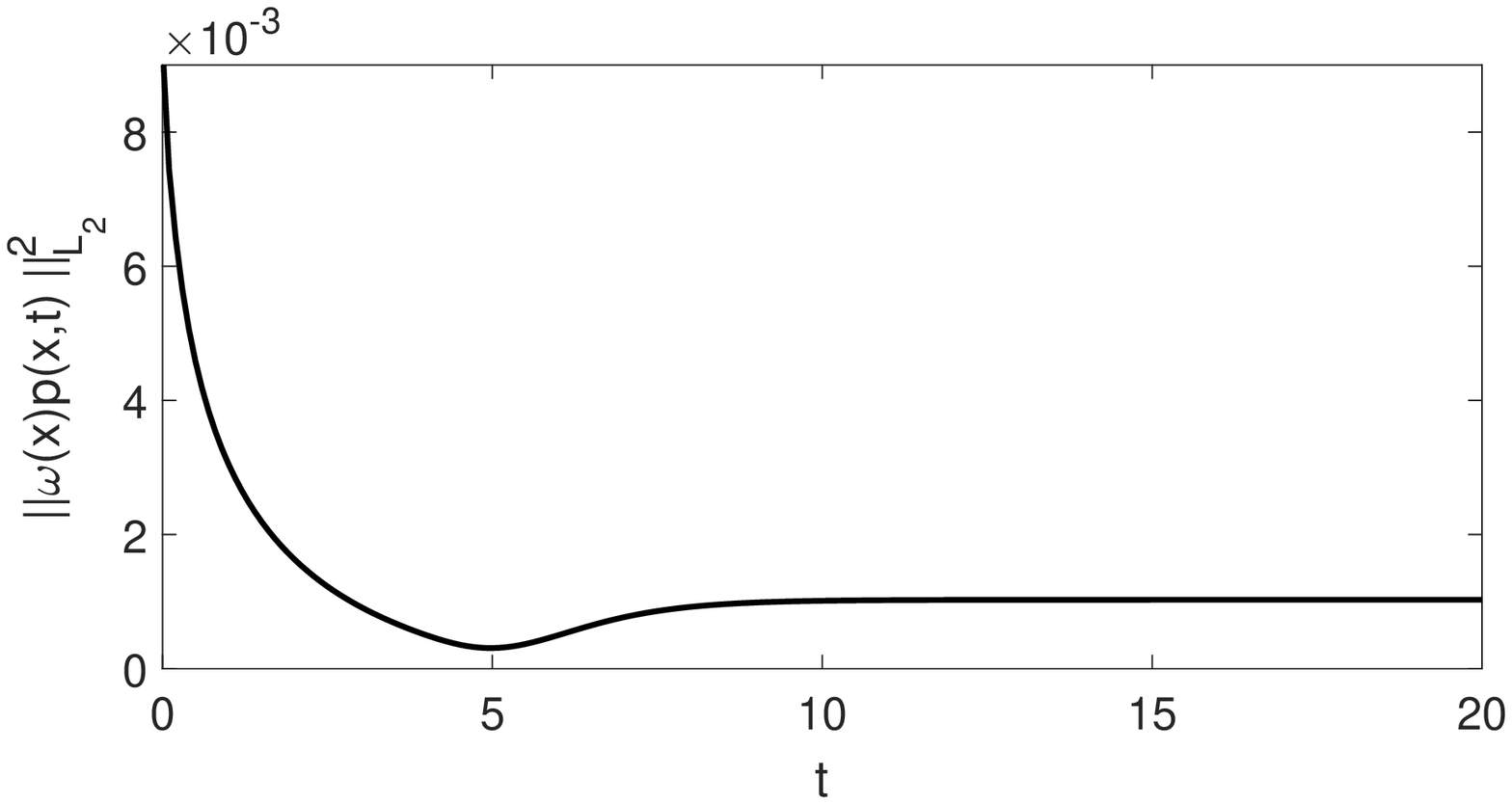}
    \includegraphics[width = 0.49\textwidth]{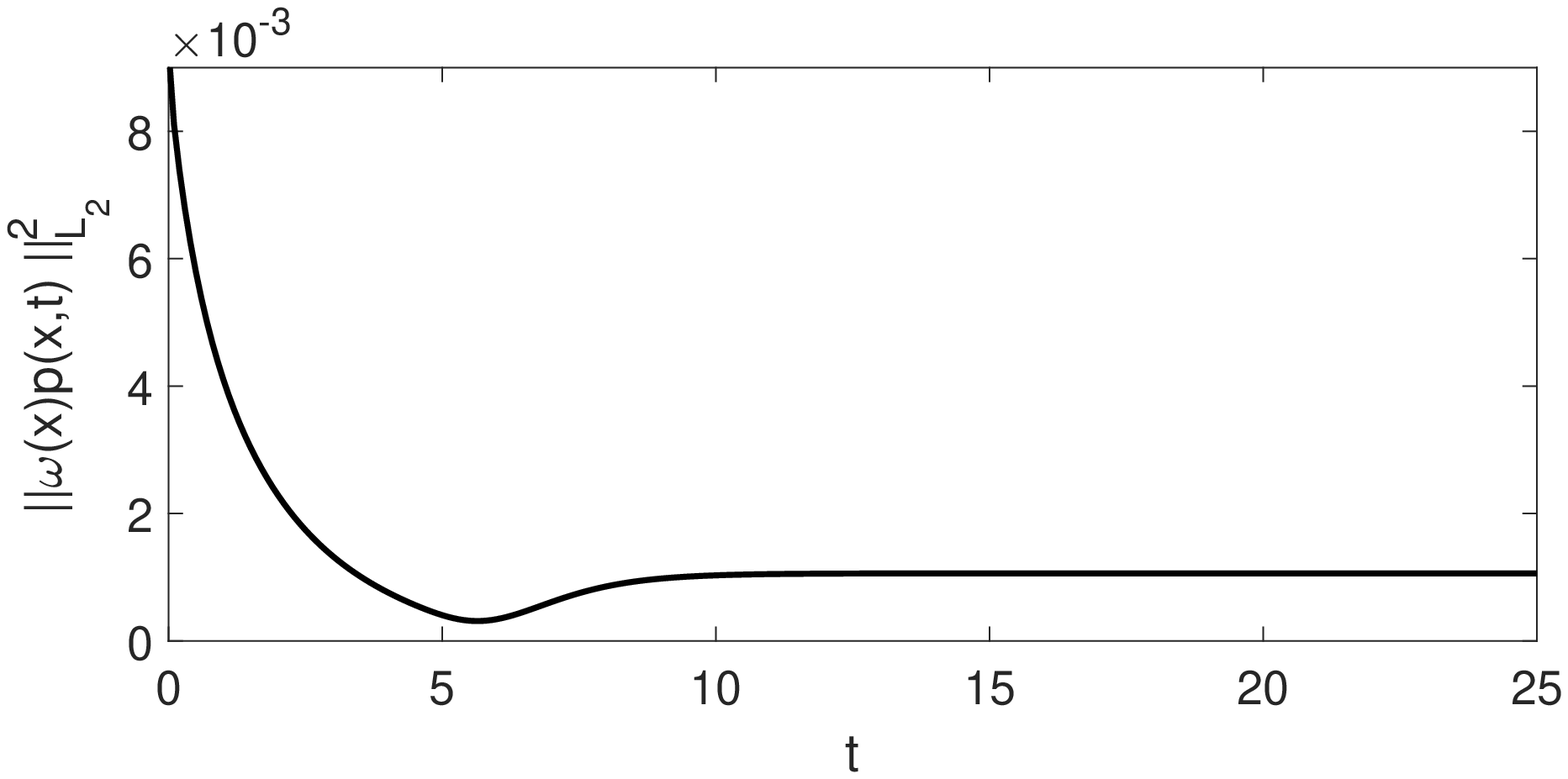}
    \caption{These two pictures illustrate convergence of weighted $L^2$ norm of $p(x,t)$ to a constant for $R_0 = 0$ and $N = 100$ (on the left) and $R_0 = 0$ and $N = 200$ (on the right).}
    \label{steadyr5a}
\end{figure}

\begin{figure}
     \includegraphics[width = 0.49\textwidth]{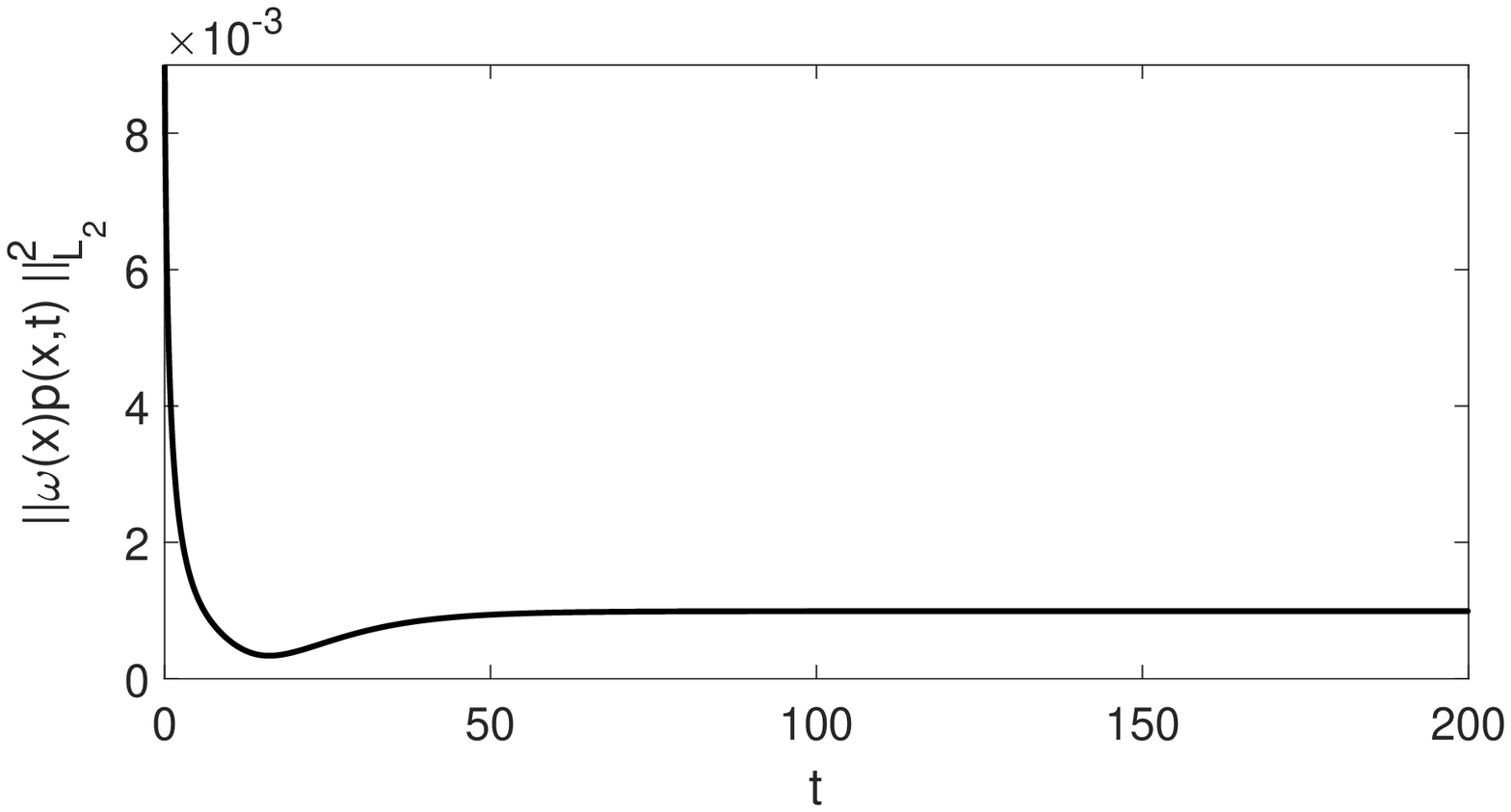}
    \includegraphics[width = 0.49\textwidth]{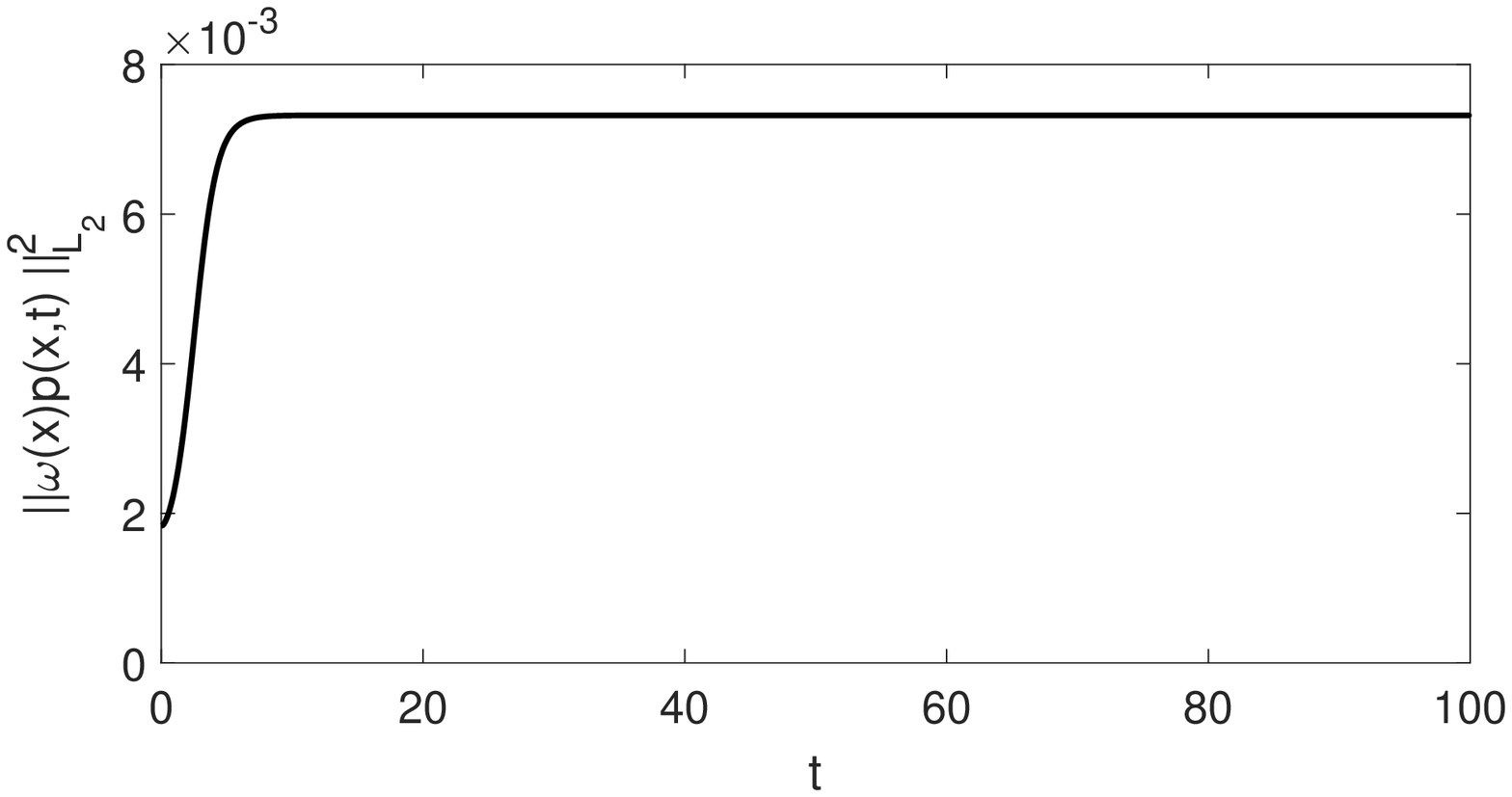}
    \caption{These two pictures illustrate convergence of weighted $L^2$ norm of $p(x,t)$ for  $R_0 = 0.5$ and $N = 100$ (on the left) and  $R_0 = 2$ and $N = 200$ (on the right).}
    \label{steadyr6a}
\end{figure}

Numerical simulations in Figures \ref{steadyr5a}--\ref{steadyr6a} illustrate the convergence result (\ref{con-01}).

\begin{remark}
Note that (\ref{con-00}) implies
$$
\int \limits_{\Omega}{p(x,t)\,dx} \to 0 \text{ as } t \to +\infty,
$$
whence we deduce that $p_{\infty} := p(x,+\infty) = 0$ for a.\,e. $x \in \bar{\Omega}$.
On the other hand, (\ref{con-01}) does the convergence to the steady state, i.\,e. $p_{\infty} = P_s$.
\end{remark}

\begin{proof}[Proof of Theorem~\ref{Th-d}]
Introducing a new function $z: = \omega(x) p(x,t),$  we rewrite  problem (\ref{c-1})--(\ref{c-3})  in more suitable form:
\begin{equation}\label{ch-1}
\begin{cases}
\omega^{-1}(x) \frac{\partial z}{\partial t} = \frac{1}{2N}  \frac{\partial}{\partial x}\big(F(x) \frac{\partial z}{\partial x}\big),\qquad\quad
(x,t)\in\Omega_{T},\\
\frac{\partial z}{\partial x}|_{x=1} = 0 \quad \text{ and }\quad z|_{x=0} = 0,\qquad  t\in[0,T],\\
z(x,0) =  \omega(x) p_0(x):=z_0(x),\qquad\quad x\in\bar{\Omega}.
\end{cases}
\end{equation}
Note that if $z|_{x=0}= \mathcal{C}  > 0$ then for the new function
$\tilde{z} = z - \mathcal{C}$  we again get problem  similar to (\ref{ch-1}).

We begin to  verify the point \textbf{(i)} of the theorem. To this end, multiplying the equation in (\ref{ch-1}) by $  z(x,t) $ and integrating  over
$\Omega$, we obtain
\begin{equation}\label{ch-3}
\frac{1}{2} \frac{d}{dt}\int \limits_{\Omega}{
\omega^{-1}(x) z^2  \,dx}  + \tfrac{1}{ 2N} \int \limits_{\Omega}{
F(x) \bigg(\frac{\partial z}{\partial z}\bigg)^2 \,dx}  =  \frac{1}{2N} F(x) z \frac{\partial z}{\partial z}  \Bigl |_0^1 = 0.
\end{equation}
Next, we take advantage of the Hardy inequality:
\begin{equation*}\label{hardy-2}
\int \limits_{\Omega}{
\omega^{-1}(x) v^2  \,dx} \leqslant C_H(R_0) \int \limits_{\Omega}{
F(x) \bigg(\frac{\partial v}{\partial x}\bigg)^2 \,dx} 
\end{equation*}
with $v(0) =0$.
Here the constant $ C_H(R_0)$ satisfies the inequalities
$$A(R_0) \leqslant C_H(R_0) \leqslant 4 A(R_0)$$ 
with
$$
A(R_0) =    \mathop {\sup} \limits_{r \in (0,1)}  \Bigl(\int \limits_{0}^r {
\tfrac{dx}{F(x)} }  \Bigr) \Bigl(\int \limits_{r}^1 {
\tfrac{dx}{\omega(x)}  }  \Bigr).
$$
Thus, collecting 
 (\ref{ch-3}) with the Hardy inequality, we end up with relations
\begin{equation}\label{ch-3-1}
 \int \limits_{\Omega}{
\omega^{-1}(x) z^2(x,t)  \,dx} \leqslant e^{- \frac{t}{  N C_H(R_0)} } \int \limits_{\Omega}{
\omega^{-1}(x) z_0^2(x)  \,dx} \to 0 \text{ as } t \to +\infty.
\end{equation}
Multiplying the equation in  (\ref{ch-1}) by $- \omega(x) \frac{\partial}{\partial x}\big(F(x) \frac{\partial z}{\partial x}\big)$ and integrating  over
$\Omega$, we obtain the equality
\begin{align*}
&\frac{1}{2} \frac{d}{dt}\int \limits_{\Omega}{
F(x) \bigg(\frac{\partial z}{\partial x}\bigg)^2  \,dx}  + \frac{1}{2N} \int \limits_{\Omega}{
\omega(x) \bigg(\frac{\partial}{\partial x}\big(F(x)\frac{\partial z}{\partial x}\big)\bigg)^2 \,dx}\\
& = F(x) \frac{\partial z}{\partial t}\frac{\partial z}{\partial x}     \Bigl |_0^1 ,
\end{align*}
which in turn provides
  \begin{equation}\label{ch-3-2}
\frac{1}{2} \frac{d}{dt}\int \limits_{\Omega}{
F(x) \bigg(\frac{\partial z}{\partial x}\bigg)^2  \,dx}  + \frac{1}{2N} \int \limits_{\Omega}{
\omega(x) \bigg(\frac{\partial}{\partial x}\big(F(x)\frac{\partial z}{\partial x}\big)\bigg)^2 \,dx}
=0. 
\end{equation}
In order to handle the second term in the left-hand side of this equality, we will exploit
 the inequality
$$
\int \limits_{\Omega}{  \tfrac{ v^2 }{F(x)} \,dx} \leqslant C_P(R_0) \int \limits_{\Omega}{\omega(x) \Big(\frac{\partial v}{\partial x}\Big)^2 \,dx}
\text{ with } v (1) = 0,
$$
where
$$
C_P(R_0) =  \int \limits_{\Omega}{  \tfrac{1}{F(x)} \Bigl( \int \limits_{x}^1 { \tfrac{dy}{\omega(y)} }  \Bigr)   \,dx}.
$$
Hence,  we end up with  relations
\begin{multline}\label{ch-3-3}
\int \limits_{\Omega}{F(x) \bigg(\frac{\partial z}{\partial x}\bigg)^2   \,dx} \leqslant e^{-  \frac{t}{N C_P(R_0)}}\int \limits_{\Omega}{F(x) \bigg(\frac{\partial z_{0}}{\partial x}\bigg)^2 \,dx}   \to 0
\text{ as } t \to +\infty .
\end{multline}
As a result,  we obtain the convergence
$$
z(x,t) \to 0 \quad\text{ strongly  in }\quad W^{1,2}(\Omega) \text{ as } t \to +\infty.
$$
if only the inequality holds
$$
\int \limits_{\Omega}{ \bigg( \omega^{-1}(x) z_0^2(x) + F(x) \bigg(\frac{\partial z_{0}}{\partial x}\bigg)^2 \bigg) \,dx} < +\infty.
$$
The simple consequence of this fact and convergence 
 (\ref{ch-3-3}) is the bound
$$
z(x,t) \leqslant  x^{\frac{1}{2}} e^{-  \frac{t}{2N C_P(R_0)}}\Bigl(\int \limits_{\Omega}{F(x) \bigg(\frac{\partial z_{0}}{\partial x}\bigg)^2 \,dx}\Bigr)^{\frac{1}{2}},
$$
which in turn provides the desired relation
\begin{equation*}\label{asym-1}
p(x,t) \leqslant \tfrac{x^{\frac{1}{2}} F(x)}{f(x)} e^{-  \frac{t}{2N C_P(R_0)}}\Bigl(\int \limits_{\Omega}{F(x) \bigg(\frac{\partial z_{0}}{\partial x}\bigg)^2 \,dx}\Bigr)^{\frac{1}{2}}.
\end{equation*}

At this point we prove statement \textbf{(ii)} of the theorem.  Multiplying (\ref{ch-1}) by $\omega(x) \psi(x)   z(x,t) $ (as for the test function $\psi$ see Definition \ref{d.bis1}) and integrating  over
$\Omega$, we obtain
\begin{align*}\label{ch-911}\notag
&\frac{1}{2} \frac{d}{dt}\int \limits_{\Omega}{
\psi(x) z^2  \,dx}  + \frac{1}{ 2N} \int \limits_{\Omega}{
f(x) \psi(x) \bigg(\frac{\partial z}{\partial x}\bigg)^2 \,dx} \\
& =    
\frac{1}{2N} \bigg( f(x) \psi(x) z \frac{\partial z}{\partial x}  - \frac{ 1}{2} ( \omega (x) \psi(x))' F(x)z^2 \bigg)  \bigg |_0^1\\
&+
\frac{1}{ 4N} \int \limits_{\Omega}z^2\frac{d}{dx}
\bigg(F(x)\frac{d}{dx}\bigg( \omega (x) \psi(x)\bigg) \bigg)  \,dx   .
\end{align*}
Then choosing here 
$$
\psi(x) = \omega^{-1}(x) \int \limits_{0}^x{ \tfrac{dy}{F(y)}}  = \tfrac{F(x)}{f(x)} \int \limits_{0}^x{ \tfrac{dy}{F(y)}}
\to \tfrac{1}{1+R_0} \text{ as } x \to 0,
$$
we arrive at the equality 
\begin{equation}\label{ch-5}
\frac{d}{dt}\int \limits_{\Omega}{
\psi(x) z^2  \,dx} + \frac{1}{N}\int \limits_{\Omega}{
f(x) \psi(x) \bigg(\frac{\partial z}{\partial x}\bigg)^2 \,dx} + \frac{1}{2N} z^2(1,t) = 0.
\end{equation}
In order to manage the left hand-side of this equality, we apply the Hardy inequality
\begin{equation*}\label{hardy-3}
\int \limits_{\Omega}{
\psi(x) v^2  \,dx} \leqslant C_H(R_0) \int \limits_{\Omega}{
f(x)\psi(x) v^2_{x} \,dx} \text{ with } v(0) =0,
\end{equation*}
where $A(R_0) \leqslant C_H(R_0) \leqslant 4 A(R_0)$ and
$$
A(R_0) =    \mathop {\sup} \limits_{r \in (0,1)}  \Bigl(\int \limits_{0}^r {
\tfrac{dx}{f(x)\psi(x) } }  \Bigr) \Bigl(\int \limits_{r}^1 {
\psi(x) \, dx }  \Bigr).
$$
Thus, we easily conclude that
\begin{equation}\label{ch-6}
 \int \limits_{\Omega}{
\psi(x) z^2(x,t)  \,dx} \leqslant e^{- \frac{t}{  N C_H(R_0)} } \int \limits_{\Omega}{
\psi(x) z_0^2(x)  \,dx} \to 0 \text{ as } t \to +\infty.
\end{equation}
Note that as $\psi(x)$ is bounded from bottom then we have  $z \to 0$ in $L^2$ as $t \to +\infty$.

Now, multiplying the equation in (\ref{ch-1}) by $- \omega(x)\phi(x) \frac{\partial}{\partial x}\big(F(x) \frac{\partial z}{\partial x}\big)$ and integrating  over
$\Omega$, we obtain
\begin{align*}
&\frac{1}{2} \frac{d}{dt}\int \limits_{\Omega}{
\phi(x) F(x) \bigg(\frac{\partial z}{\partial x}\bigg)^2  \,dx}  + \frac{1}{2N} \int \limits_{\Omega}
\omega(x)\phi(x) \bigg(\frac{\partial}{\partial x}\bigg(F(x)\frac{\partial z}{\partial x}\bigg)\bigg)^2 \,dx\\& = 
\bigg( \phi(x) F(x) \frac{\partial z}{\partial t} \frac{\partial z}{\partial x}  - \frac{1}{4N} \omega(x) \phi'(x)F^2(x) \bigg(\frac{\partial z}{\partial x}\bigg)^2 \bigg)   \bigg |_0^1 \\
& +
\frac{1}{4N} \int \limits_{\Omega}
(\omega(x) \phi'(x))'  F^2(x)\bigg(\frac{\partial z}{\partial x}\bigg)^2   \,dx   .
\end{align*}
Taking $\phi(x)$ such that $(\omega(x) \phi'(x))'  F^2(x) = 2 f(x) \psi(x)$, i.\,e.
$$
\phi (x) = 2 \int \limits_{0}^x { \tfrac{1}{\omega (y)} \Bigl( \int \limits_{0}^y { \tfrac{1}{F(v)} \Bigl( \int \limits_{0}^v {
\tfrac{ds}{F(s)} } \Bigr) dv}  \Bigr)  dy}  \sim \tfrac{x^2}{2(R_0+1)} \text{ as } x \to 0,
$$
we have
\begin{align*}
 & \frac{d}{dt}\int \limits_{\Omega}
\phi(x) F(x) \bigg(\frac{\partial z}{\partial x}\bigg)^2  \,dx  + \frac{1}{ N} \int \limits_{\Omega}
\omega(x)\phi(x) \bigg(\frac{\partial}{\partial x}\bigg(F(x)\frac{\partial z}{\partial x}\bigg)\bigg)^2 \,dx
\\
& = \frac{1}{N} \int \limits_{\Omega}
f(x) \psi(x) \bigg(\frac{\partial z}{\partial x}\bigg)^2  \,dx.
\end{align*}
Collecting this equality with (\ref{ch-5}),  we arrive at
\begin{align}\label{ch-8}\notag
  &\frac{d}{dt}\int \limits_{\Omega}
\bigg( \phi(x) F(x) \bigg(\frac{\partial z}{\partial x}\bigg)^2+ \psi(x) z^2 \bigg) \,dx \\ \notag
& + \frac{1}{ N} \int \limits_{\Omega}
\omega(x)\phi(x) \bigg(\frac{\partial }{\partial x}\bigg(F(x)\frac{\partial z}{\partial x}\bigg)\bigg)^2 \,dx 
\\&
+ 
  \frac{1}{2N} z^2(1,t) =0.
\end{align}
Then, applying the estimate
$$
\int \limits_{\Omega} \frac{\phi(x)}{F(x)} v ^2  \,dx \leqslant C_P(R_0) \int \limits_{\Omega}\omega(x)\phi(x) \bigg(\frac{\partial v}{\partial x}\bigg)^2 \,dx
\text{ with } v(1) = 0,
$$
where
$$
C_P(R_0) =  \int \limits_{\Omega}{  \tfrac{\phi(x)}{F(x)} \Bigl( \int \limits_{x}^1 { \tfrac{dy}{\omega(y)\phi(y)} }  \Bigr)   \,dx},
$$
to (\ref{ch-8}), we conclude
\begin{align*}\label{ch-9}
&\int \limits_{\Omega}\phi(x) F(x) \bigg(\frac{\partial z}{\partial x}\bigg)^2   \,dx \leqslant
\Bigl[ \int \limits_{\Omega}{\phi(x) F(x) \bigg(\frac{\partial z_{0}}{\partial x}\bigg)^2 \,dx} \\&+
 \tfrac{1}{NC_P(R_0)} \bigg(1 - \tfrac{NC_P(R_0)C_H(R_0)}{C_H(R_0) -C_P(R_0)} +
 \tfrac{NC_P(R_0)C_H(R_0)}{C_H(R_0) -C_P(R_0)}
  e^{\tfrac{C_H(R_0) - C_P(R_0)}{NC_P(R_0)C_H(R_0)}t} \bigg)\\&
	\times\int \limits_{\Omega}\psi(x) z_0^2(x)  \,dx \Bigr] e^{- \frac{t}{NC_P(R_0)}}
	\to 0 \quad 
\text{ as } \quad t \to +\infty .
\end{align*}
As a result, 
$$
z(x,t) \to 0  \text{ and }
x \frac{\partial z}{\partial x}(x,t) \to 0  \text{ strongly in } L^2(\Omega) \text{ as } t \to +\infty,
$$
if only the inequality holds
$$
\int \limits_{\Omega}{ \bigg( \psi(x) z_0^2(x) +  \phi(x)F(x) \bigg(\frac{\partial z_{0}}{\partial x}\bigg)^2 \bigg) \,dx} < +\infty.
$$
Finally, the passage to $p$ in these relations completes the proof of assertion (ii) and, as a consequence, of  Theorem \ref{Th-d}.

\end{proof}



\section{Particular solutions}
In this section, we will illustrate the result of Theorem~\ref{Th-d} by  particular solutions. First, we analyze classical solutions to problem  (\ref{ch-1}) and then we discuss particular weak solutions.
\subsection{Particular classical solutions}
Introducing the new variable
$$
s = \sqrt{2N}\int \limits_0^x {\tfrac{dy}{f^{\frac{1}{2}}(y)}},
$$
and denoting 
$$
l(s) := \sqrt{2N} \tfrac{g(x)}{f^{\frac{1}{2}}(x)} = \sqrt{\tfrac{2N}{R_0}} \,\tfrac{\sin \bigl( \frac{1}{2}\sqrt{\frac{R_0}{2N}} s \bigr)
\bigl[R_0 -1 -(R_0+1) \sin^2 \bigl( \frac{1}{2}\sqrt{\frac{R_0}{2N}} s \bigr) \bigr] }{|\cos\bigl( \frac{1}{2}\sqrt{\frac{R_0}{2N}} s \bigr)|},
$$
$$
s_1 := 2 \sqrt{\tfrac{2N}{R_0}} \arcsin \bigl( \sqrt{\tfrac{R_0 }{R_0+1}} \bigr),
$$
we rewrite problem (\ref{ch-1}) in the form
\begin{equation}\label{asy-0}
\begin{cases}
\frac{\partial z}{\partial t} =  \frac{\partial^{2} z}{\partial s^{2}} + l(s) \frac{\partial z}{\partial s},\qquad s\in(0,s_1),\quad t\in(0,T),\\
z(0,t) =0, \quad \frac{\partial z}{\partial s}(s_1,t) = 0,\quad t\in[0,T].
\end{cases}
\end{equation}
It is worth noting that to reach \eqref{asy-0}, we incorporate the following simple verified relations:
$$
s =
\begin{cases}
 2 \sqrt{\tfrac{2N}{R_0}} \arcsin \bigl( \sqrt{\tfrac{R_0 }{R_0+1}} x^{\frac{1}{2}} \bigr),\quad 
\text{ if }\quad R_0 > 0,\\
\,
\\
2 \sqrt{2N} x^{\frac{1}{2}},\qquad \quad\text{ if }\quad R_0 = 0, 
\end{cases}
$$
or as consequence
$$
x =
\begin{cases}
\tfrac{R_0+1}{R_0} \sin^2 \bigl( \tfrac{1}{2}\sqrt{\tfrac{R_0}{2N}} s \bigr), 
 \quad
\text{ if } \quad R_0 > 0,  \\
\,
\\
\tfrac{1}{8N} s^2,\qquad \qquad\text{ if } \quad R_0 = 0. 
\end{cases}
$$
Separating variables in (\ref{asy-0}): 
$$
z(s,t)= T(t) S(s),
$$
leads to the problems
$$
\tfrac{T'(t)}{T(t)} = \tfrac{S''(s) + l(s) S'(s)}{S(s)} = - \lambda,
$$
whence
$$
T'(t) = - \lambda T(t),
$$
\begin{equation}\label{asy-0-1}
S''(s) + l(s) S'(s) =  - \lambda S(s)
\end{equation}
with
$$
S(0) =0, \ S'(s_1) = 0.
$$
After that, multiplying (\ref{asy-0-1}) by $p(s) := e^{ \int \limits_0^s{ l(y)\,dy}}$,  we immediately obtain the equation
$$
- (p(s)S'(s))' = \lambda p(s) S(s).
$$
Then setting 
$$U(s) = p^{\frac{1}{2}}(s) S(s)\quad q(s) = \tfrac{(p^{\frac{1}{2}}(s))''}{p^{\frac{1}{2}}(s)} = \tfrac{1}{2} \bigl( l'(s) + \tfrac{1}{2} l^2(s) \bigr), 
$$ 
we arrive at the classical Sturm-Liouville problem with the continuous potential $q(s)$
\begin{equation}\label{egen}
\begin{cases}
- U''(s) + q(s) U(s) = \lambda U(s),\qquad s\in(0,s_1), \\
 U(0) = 0, \quad U'(s_1) = 0.
\end{cases}
\end{equation}
The standard technical calculations provide the following asymptotic  behavior of
eigenvalues and eigenfunctions  to problem 
\eqref{egen}
$$
\lambda_k \sim (\tfrac{\pi}{s_1} )^2 \bigl(k + \tfrac{1}{2} \bigr)^2, \ \ U_k(s) \sim \sin \bigl( \tfrac{\pi}{s_1} ( k + \tfrac{1}{2}) s  \bigr),
$$
or returning to  (\ref{asy-0-1}) 
$$
\lambda_k \sim (\tfrac{\pi}{s_1} )^2 \bigl(k + \tfrac{1}{2} \bigr)^2, \ \
S_k(s) \sim e^{-\frac{1}{2} \int \limits_0^s{ l(y)\,dy}} \sin \bigl( \tfrac{\pi}{s_1} ( k + \tfrac{1}{2}) s  \bigr).
$$
Thus, problem (\ref{asy-0}) has a particular solution
$$
z(s,t) = \mathop {\sum} \limits_{k=0}^{+\infty} {c_k e^{-\lambda_k t} S_k(s)},
$$
which in turn means
$$
z(x,t) = \mathop {\sum} \limits_{k=0}^{+\infty} {c_k e^{-\lambda_k t} \varphi_k(x)},
$$
where
$$
\lambda_k \sim  \tfrac{ \pi^2 }{N}\bigl(k + \tfrac{1}{2} \bigr)^2, \ \
\varphi_k(x) \sim e^{- N^{\frac{3}{2}} x} \sin \Bigl(  \pi ( k + \tfrac{1}{2}) \tfrac{\arcsin \bigl( \sqrt{\frac{R_0 }{R_0+1}} x^{\frac{1}{2}} \bigr)}{\arcsin \bigl( \sqrt{\frac{R_0 }{R_0+1}}   \bigr)}  \Bigr).
$$

Finally, keeping in mind the relation $z(x,t) = \omega(x) p(x,t)$, 
we deduce the particular classical solution
$$
p(x,t) = \tfrac{1}{\omega(x)} \mathop {\sum} \limits_{k=0}^{+\infty} {c_k e^{-\lambda_k t} \varphi_k(x)}.
$$
It is worth noting that the asymptotic behavior of the solution $\frac{C_1}{ \sqrt{x}e^{ C_2 t}}$ as $ x \to 0^+$ has a good agreement
with Theorem~\ref{Th-d} (i).


\subsection{Particular weak solutions}
\begin{figure}
     \includegraphics[width = 0.49\textwidth]{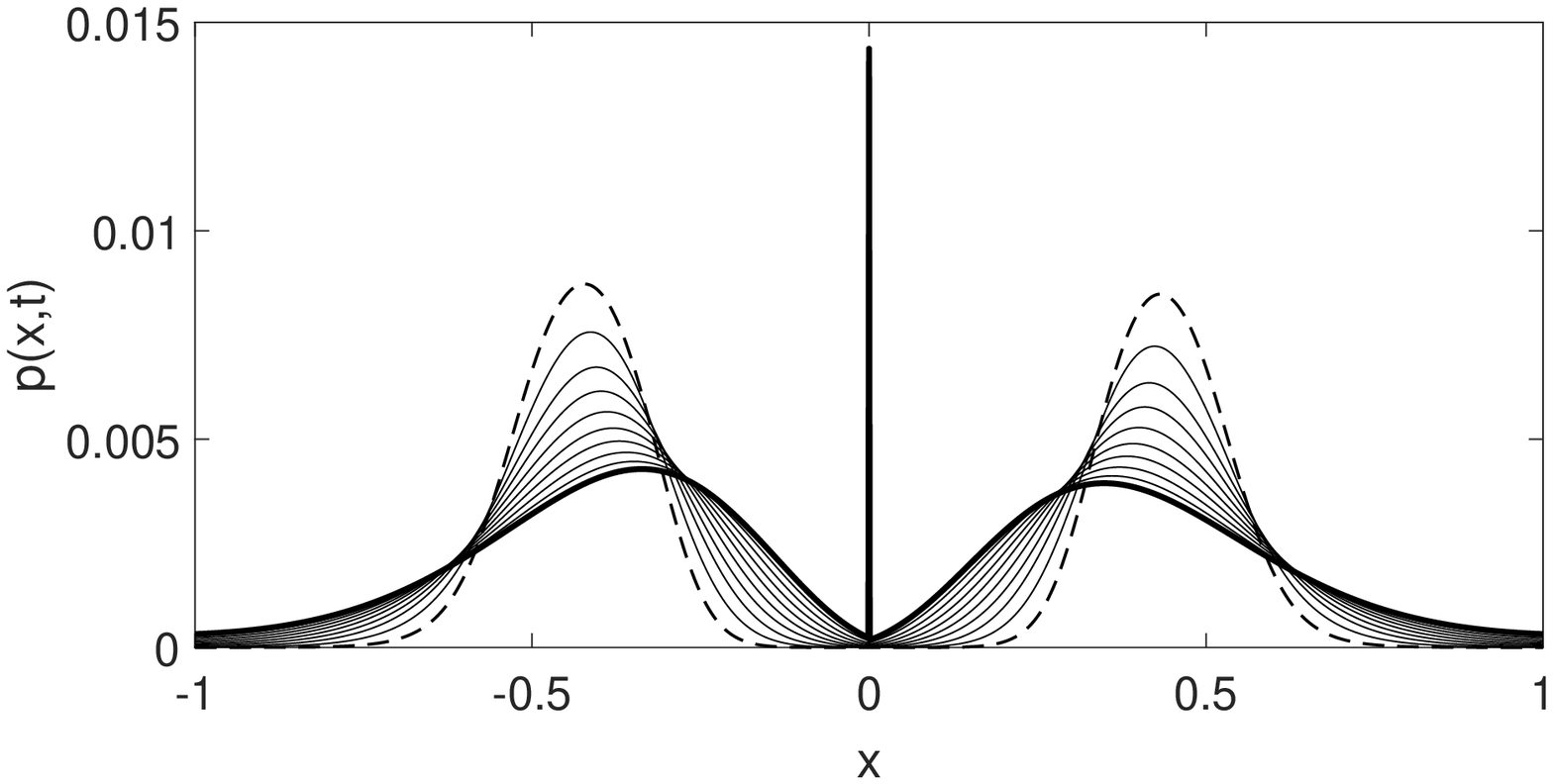}
    \includegraphics[width = 0.49\textwidth]{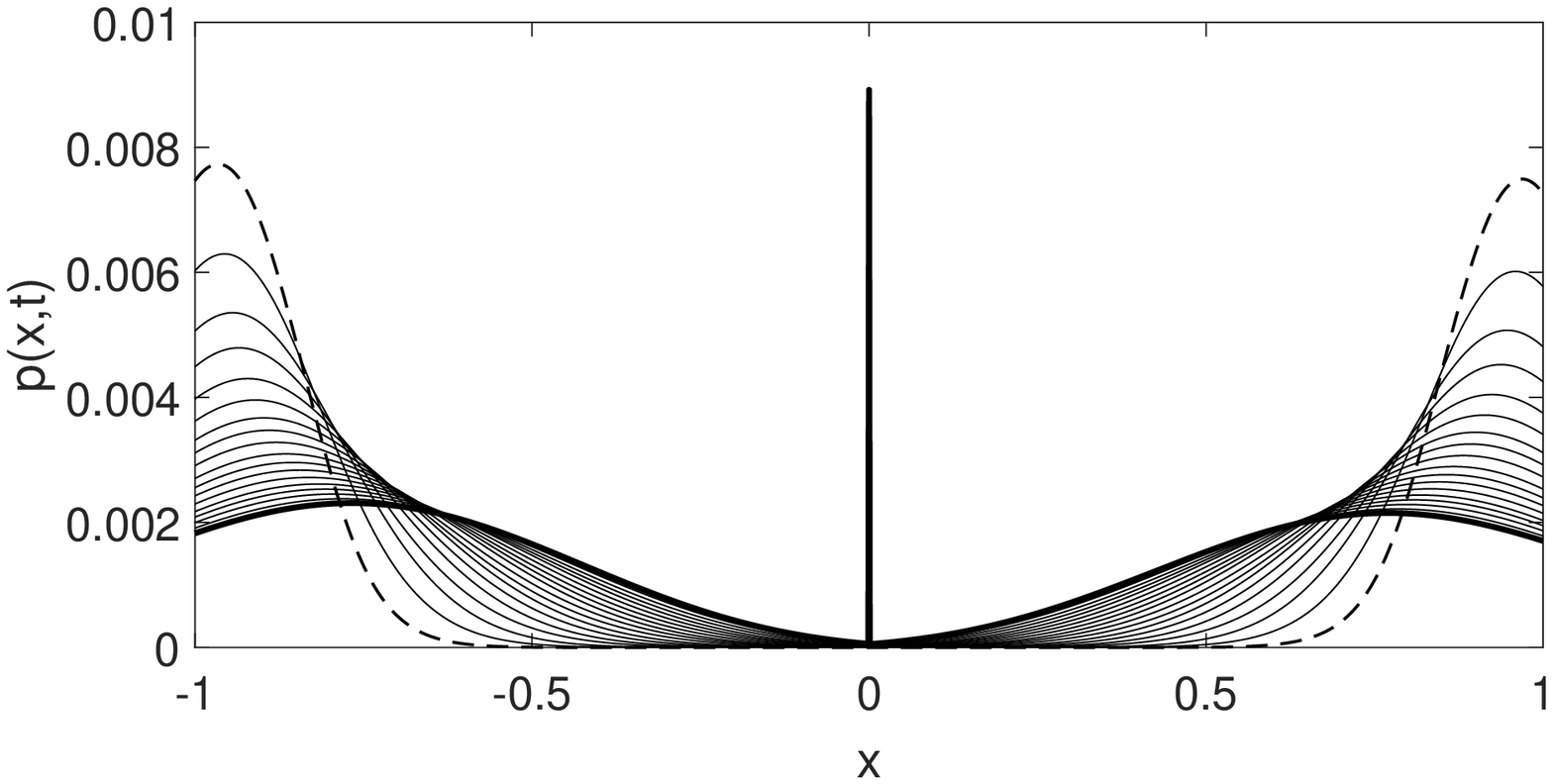}
    \caption{These two pictures illustrate existence of delta function type solution for a symmetrised problems with  $R_0 = 0$ and $N = 10$.}
    \label{steadyr7a}
\end{figure}
First of all we remark that
with zero on the boundary the integral $\int_{0}^{a} f(z) \delta(z) dz$ is \textit{a priori} not well defined. Then, we denote positive and non-negative cut of functions by $s1_{x>0}$ and $s1_{x\geq 0}$,respectively.
 It corresponds to integrating $\delta$ function against the function $f 1_{x>0}$ (or possibly $f 1_{x \geq 0}$ ) which is not continuous at the origin $x=0$, where the support of delta function lies. Some formal not analytically justified expressions have been used in a technical literature like $\int_{0}^{a} f(z) \delta(z) d z=\int_{-a}^{0} f(z) \delta(z) d z=\frac{1}{2} f(0)$. This is the reason why to justify a delta function type solution for our problem we will use a symmetrization method by considering a problem of extended domain $[-1,1]$.
Now we look for a solution to a symmetrically extended problem (\ref{c-1})--(\ref{c-3}) on the interval $(-1,1)$ in the form $p(x,t) = \eta(t) \delta_0(x)$.

Multiplying symmetrized equation (\ref{c-1}) by $\phi (x) \in C^2[-1,1]$ with compact support   and $\phi(0) \neq 0$,
after integrating by parts in $Q_T := (-1,1) \times (0,T)$, we have
$$
\iint \limits_{Q_T} { \frac{\partial p}{\partial t} \phi(x) \,dx dt} =
\frac{1}{2N} \iint \limits_{Q_T} { \bigl( \tilde{f}(x) p \phi''(x) + 2N \tilde{g}(x) p \phi'(x) \bigr) \,dx dt},
$$
where $\tilde{f}$ and $\tilde{g}$ are even continuation of $f$ and $g$, accordingly.
Taking $p(x,t) = \eta(t) \delta_0(x)$ in the last equality, we deduce that
$$
 ( \eta (T) -\eta(0) ) \phi(0)   =   ( \tfrac{1 }{2N} f(0)\phi''(0) + g(0) \phi'(0) ) \int \limits_{0}^T {\eta(t) \,dt} = 0.
$$
Thanks to the inequality $\phi(0) \neq 0$,  we have
$$
\eta (T)  = \eta(0) = M > 0  .
$$
As a result, symmetrized equation (\ref{c-1}) has the following solution:
$$
p (x,t) =  M \delta_0 (x) \text{ for all } (x,t) \in (-1,1) \times (0,+\infty)  .
$$

Convergence of solution toward delta function is illustrated in Figure {\ref{steadyr7a}}. It is interesting to mention that a non-smooth change of variables $y = 2\,\sqrt{x}$ (for the case $R_0 = 0$) will remove the degeneracy from the equation but the whole long-time dynamics will not be recovered as a in terms of $y$ as a global attractor type solution $C e^t$ that satisfied a no-flux boundary conditions in terms of variable $y$ will not be satisfying no-flux boundary conditions in terms of variable $x$. Although the $C e^t$ solves the original problem with Neumann boundary conditions (which make the original problem ill-posed) it is unstable and a slight perturbation will drive the dynamics toward delta function.

\section{Local classical solvability and asymptotic behavior of classical solutions near $x=0$}
 Here, we will discuss existence and behavior of a classical solution near the degenerate point $x=0$. Besides, we will analyze sufficient conditions on the given data in the model which provides the fulfillment of the conservation law. To this end, we consider the initial-boundary value problem to more general linear degenerate equation and the boundary condition than in the original problem (\ref{c-1})--(\ref{c-3}).

Let
$
\Omega=(0,l)\subseteq\R_{+}=\{x:x>0\} $
be a segment with a boundary $\partial\Omega=\{x=0\}\cup\{x=l\}$. In our further consideration, we also use the notation $\Gamma_1$ for the right point of the boundary, i.e. $\Gamma_1=\{x=l\}$.
For an arbitrary fixed time $T>0$, we denote
\[
\partial\Omega_{T}=\partial\Omega\times[0,T]\quad\text{and}\quad \Gamma_{1,T}=\Gamma_1\times[0,T].
\]
We analyze the linear degenerate equation in the unknown function $u=u(x,t):\Omega_{T}\to\R,$
\begin{equation}\label{1.1}
\frac{\partial u}{\partial t}-\mathcal{L}u=f(x,t)
\end{equation}
supplemented with the initial condition
\begin{equation}\label{1.2}
u(x,0)=u_{0}(x)\quad\text{in}\quad\bar{\Omega},
\end{equation}
subject to the condition of the third kind (\textbf{3BC}):
\begin{equation}\label{1.3}
\mathcal{M}u=\psi(x,t)\quad\text{on}\quad\Gamma_{1,T},
\end{equation}
where the functions $f,u_0$ and $\psi$ are prescribed.

Coming to the operator involved, $\mathcal{L}$ is the linear degenerate elliptic operator of the second order with time-dependent coefficients, namely,
\[
\mathcal{L}=x a_{0}(x,t)\frac{\partial^{2}}{\partial x^{2}}+a_1(x,t)\frac{\partial}{\partial x}+a_2(x,t),
\]
while the operator $\mathcal{M}$ reads as
\[
\mathcal{M}=b_1(x,t)\frac{\partial}{\partial x}+b_{2}(x,t).
\]


\subsection{Solvability of \eqref{1.1}-\eqref{1.3} in smooth classes and behavior of the solution at $x=0$}
\label{s3}
\noindent
We start by introducing some general assumptions:
\begin{description}
    \item[H1 (Conditions on the coefficients):] There exist positive $\delta_{0}$ and $\delta_{1}$ such that
		\begin{equation}\label{3.1}
		a_0(x,t)\geq\delta_0\,\text{ for any }(x,t)\in\bar{\Omega}_{T}, \,
		a_1(0,t)\geq\delta_1\text{ for any } t\in[0,T].
		\end{equation}
	Besides,
	\[
	a_{0},a_{1},a_{2}\in\C_{\alpha}^{\alpha,\alpha/2}(\bar{\Omega}_{T}),\, b_{1},b_{2}\in\C^{1+\alpha,\frac{1+\alpha}{2}}(\Gamma_{1,T}),\]
	\[
	b_{1}(l,t)\neq 0\quad \text{for all } t\in[0,T].
	\]
	  \item[H2 (Conditions on the given functions):]
		\[
		u_{0}(x)\in\C_{\alpha}^{2+\alpha}(\bar{\Omega}),\quad f\in\C_{\alpha}^{\alpha,\alpha/2}(\bar{\Omega}_{T}),\quad \psi\in\C^{1+\alpha,\frac{1+\alpha}{2}}(\Gamma_{1,T}).
		\]
		\item[H3 (Compatibility conditions):] When $x\in\Gamma_{1}$, the compatibility condition at $t=0$ holds
		\[
		\mathcal{M}u_{0}(x)|_{t=0}=\psi(x,0).
		\]
		\end{description}	
\begin{remark}\label{r3.1}
It is worth mentioning that assumption \eqref{3.1} on the coefficient $a_1$ and Fiker condition (see. i.e. \cite[(1.13), p.17]{OR})
guarantee well-posedness in smooth classes of problem \eqref{1.1}-\eqref{1.3} (i.e. the initial-boundary value problem is stated without a boundary condition at the point $x=0$).
\end{remark}
We are now in the position to state our main result concerning the solvability of \eqref{1.1}-\eqref{1.3}.
\begin{theorem}\label{t3.1}
Let $T>0$ be arbitrarily fixed and let assumptions \textbf{H1-H3} hold. Then problem \eqref{1.1}-\eqref{1.3} admits a unique classical
solution $u=u(x,t)$ on the space-time rectangle $\Omega_{T}$, satisfying regularity $u\in\C_{\alpha}^{2+\alpha,\frac{2+\alpha}{2}}(\bar{\Omega}_{T})$. Besides, the following estimate holds
\begin{align}\label{3.2}\notag
\|u\|_{\C_{\alpha}^{2+\alpha,\frac{2+\alpha}{2}}(\bar{\Omega}_{T})}&\leq C_{1}[\|u_{0}\|_{\C_{\alpha}^{2+\alpha}(\bar{\Omega})}
+\|f\|_{\C_{\alpha}^{\alpha,\alpha/2}(\bar{\Omega}_{T})}
+\|\psi\|_{\C^{1+\alpha,\frac{1+\alpha}{2}}}
] \\ &:= 
C_{1}\mathfrak{F}(u_0,f,\psi)
\end{align}
with some $C_1>0$ being independent of the right-hand sides of \eqref{1.1}-\eqref{1.3}.

If, in addition, the following assumption holds.

\noindent\textbf{H4:} There is a real $s_0$ such that
\begin{equation}\label{3.3}
\begin{cases}
s_0>\alpha,\\
\quad 2s_0a_{0}(0,t)+a_{1}(0,t)>0,\\
 \underset{x\to 0}{\lim}\bigg|\frac{a_1(x,t)+(s_0-1)a_{0}(x,t)}{x}\bigg|<C,
\end{cases}
\end{equation}
for all $t\in[0,T]$ and
\[
u_{0}\in E_{s_0}^{2+\alpha}(\bar{\Omega}),\quad f\in E_{s_0}^{\alpha,\alpha/2}(\bar{\Omega}_{T}).
\]
Then, the classical solution $u$ satisfies the regularity 
$$(x^{-s_{0}}u)\in\C_{\alpha}^{2+\alpha,\frac{2+\alpha}{2}}(\bar{\Omega}_{T}),$$
 and
\begin{equation}\label{3.4}
\|x^{-s_{0}}u\|_{\C_{\alpha}^{2+\alpha,\frac{2+\alpha}{2}}(\bar{\Omega}_{T})}\leq C_{2}[\|u_{0}\|_{E_{s_0}^{2+\alpha}(\bar{\Omega})}
+\|f\|_{E_{s_0}^{\alpha,\alpha/2}(\bar{\Omega}_{T})}
+\mathfrak{F}(u_0,f,\psi)
].
\end{equation}
\end{theorem}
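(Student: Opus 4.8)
The plan is to handle part (i) — existence, uniqueness, and the weighted Schauder bound \eqref{3.2} — by the method of continuity, and then to deduce the sharp vanishing statement of part (ii) by a conjugation $v=x^{-s_0}u$ that reduces it back to part (i). First I would dispose of the inhomogeneous data: subtracting an explicit lifting of $u_0$ and $\psi$ (a function in $\C_{\alpha}^{2+\alpha,\frac{2+\alpha}{2}}(\bar{\Omega}_{T})$ realizing the initial datum and the third-kind condition on $\Gamma_{1,T}$, which exists because of the compatibility condition \textbf{H3}), I reduce to $u_0\equiv0$, $\psi\equiv0$, i.e.\ to the subspace $\C_{\alpha,0}^{2+\alpha,\frac{2+\alpha}{2}}$ of Definition \ref{d2.3}. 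Everything then rests on the a priori estimate
\[
\|u\|_{\C_{\alpha}^{2+\alpha,\frac{2+\alpha}{2}}(\bar{\Omega}_{T})}\le C\,\|f\|_{\C_{\alpha}^{\alpha,\alpha/2}(\bar{\Omega}_{T})},
\]
with $C$ depending only on $\delta_0,\delta_1,\alpha,T$ and the Hölder norms of the coefficients. Granting this bound \emph{uniformly} along the homotopy $\mathcal{L}_\tau=(1-\tau)\mathcal{L}_0+\tau\mathcal{L}$, $\tau\in[0,1]$, joining $\mathcal{L}$ to the constant-coefficient model $\mathcal{L}_0=\delta_0\,x\,\partial_{xx}+\delta_1\partial_x$ (which satisfies \textbf{H1} and is explicitly solvable), the set of $\tau$ for which \eqref{1.1}--\eqref{1.3} is solvable is open and closed in $[0,1]$ and contains $0$, hence equals $[0,1]$, giving existence at $\tau=1$. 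Uniqueness is then immediate from the same estimate applied to the difference of two solutions (alternatively, from the maximum-principle structure guaranteed by $a_1(0,t)\ge\delta_1>0$, cf.\ Remark \ref{r3.1}).

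The core — and the main obstacle — is the a priori estimate, which I would localize by a partition of unity. Away from $x=0$ the equation is uniformly parabolic, so interior Schauder estimates together with the standard estimate for the oblique third-kind condition at $x=l$ (licit since $b_1(l,t)\neq0$) give the bound in the ordinary parabolic Hölder norm, which coincides with the weighted one on $(\varepsilon,l)$. Near $x=0$ I would freeze the coefficients at the origin and pass to the variable $y\sim x^{1/2}$, which turns $x a_0(0,t)\partial_{xx}$ into $\partial_{yy}$ at the cost of a Bessel-type first-order singularity $\sim y^{-1}\partial_y$; the condition $a_1(0,t)\ge\delta_1>0$ is precisely the Fichera condition making $x=0$ an outflow point where no boundary datum is needed and for which the model (Bessel) semigroup furnishes the sharp weighted estimate. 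Freezing plus a perturbation/bootstrap argument — absorbing the Hölder increments of the coefficients, which is exactly where the weighting built into $\C_{\alpha}^{\alpha,\alpha/2}$ is exploited — then upgrades this to variable coefficients and closes \eqref{3.2}. The delicate technical points are the uniformity of all constants along the homotopy and the matching of the model weighted estimate with the norm of Definition \ref{d2.2}.

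For part (ii) I would set $v=x^{-s_0}u$ and compute that $v$ solves $\partial_t v-\mathcal{L}_{s_0}v=x^{-s_0}f$ with
\[
\mathcal{L}_{s_0}=x a_0\partial_{xx}+\bigl(2s_0a_0+a_1\bigr)\partial_x+\Bigl[\,s_0\,\tfrac{a_1+(s_0-1)a_0}{x}+a_2\,\Bigr],
\]
a transformed third-kind condition on $\Gamma_{1,T}$, and initial datum $x^{-s_0}u_0$. The three clauses of \textbf{H4} in \eqref{3.3} are exactly what keeps $\mathcal{L}_{s_0}$ of the admissible type: the limit condition keeps the zeroth-order coefficient bounded as $x\to0$; the inequality $2s_0a_0(0,t)+a_1(0,t)>0$ reinstates the Fichera condition for the new drift, so no boundary condition at $x=0$ is needed for $v$; and $s_0>\alpha$ renders the weights consistent. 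By the very definition of the spaces $E_{s_0}$ in Definition \ref{d2.2}, one has $x^{-s_0}f\in\C_{\alpha}^{\alpha,\alpha/2}(\bar{\Omega}_{T})$ and $x^{-s_0}u_0\in\C_{\alpha}^{2+\alpha}(\bar{\Omega})$ with controlled norms, and \textbf{H3} transforms accordingly. Applying part (i) to the $v$-problem then yields $x^{-s_0}u\in\C_{\alpha}^{2+\alpha,\frac{2+\alpha}{2}}(\bar{\Omega}_{T})$ together with \eqref{3.4}. The remaining subtlety to verify carefully is that the transformed zeroth-order coefficient lies in $\C_{\alpha}^{\alpha,\alpha/2}$ rather than being merely bounded, which is where one leans on the regularity of $a_0,a_1$ and the quantitative form of the last limit in \eqref{3.3}.
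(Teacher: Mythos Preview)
Your reduction in part~(ii) via the conjugation $v=x^{-s_0}u$ matches the paper exactly: the authors also observe that \eqref{3.4} is an immediate consequence of \eqref{3.2} applied to the equation satisfied by $\mathcal{U}=x^{-s_0}u$, with \textbf{H4} ensuring the transformed operator again satisfies \textbf{H1}.

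For part~(i), your approach is correct but genuinely different from the paper's. You propose the method of continuity: establish the a~priori Schauder estimate uniformly along a homotopy to a constant-coefficient model, and invoke solvability of that model. The paper instead uses the \emph{regularizer} (parametrix) method of Lady\v{z}enskaja--Solonnikov--Ural'ceva \cite[Ch.~4]{LSU}: after reducing to zero initial data via Lemma~\ref{l3.2}, it builds a partition of unity $\{\xi^m,\zeta^m\}$, solves on each patch a frozen-coefficient model problem \eqref{3.12}--\eqref{3.14} (using Lemma~\ref{l3.1} from \cite{BK} for the degenerate Cauchy problem near $x=0$, and classical results near $x=l$ and in the interior), and glues the local solutions into an approximate inverse $\mathfrak{R}$. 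Lemma~\ref{l3.3} shows $\mathcal{A}\mathfrak{R}=I+\mathfrak{T}_1$ and $\mathfrak{R}\mathcal{A}=I+\mathfrak{T}_2$ with $\|\mathfrak{T}_j\|\le\tfrac12$ on a short time interval, so $\mathcal{A}$ is invertible there; iteration in time reaches $T$. Both approaches share the same local ingredients (partition of unity, freezing, model problems), but the paper's regularizer constructs the inverse directly and never needs a continuous deformation, while your continuity argument trades the explicit parametrix for a uniform a~priori bound along the homotopy. The paper's route also avoids your change of variable $y\sim x^{1/2}$ near the origin: instead of passing to a Bessel operator, it invokes Lemma~\ref{l3.1}, which treats the degenerate model $\partial_t v-x\partial_{xx}v-A_1\partial_x v=F$ in $\bar{\R}_{+,T}$ directly in the weighted class $\C_\alpha^{2+\alpha,\frac{2+\alpha}{2}}$.
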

\begin{remark}\label{r3.2}
It is apparent that estimate \eqref{3.4} where $s_0>\alpha$ provides the equality
\begin{equation}\label{3.5}
u(0,t)=0\quad\text{for all}\quad t\in[0,T].
\end{equation}
\end{remark}
\begin{example}\label{e3.1}
The following is an example of coefficients $a_0,a_1$ and the weight $s_0$ satisfying condition \eqref{3.3} in the case of
$\alpha\in(0,1/2)$:
\[
s_0=\frac{1}{2}\quad\text{and}\quad a_1(x,t)=\frac{a_0(x,t)}{2},\quad (x,t)\in\bar{\Omega}_{T},
\]
where $a_0$ meets requirement \textbf{H1}.
\end{example}
It is easy to verify that condition \eqref{3.3} is not satisfied in the case of original problem (\ref{c-1})--(\ref{c-3}). That means, estimate \eqref{3.4} does not provid equality \eqref{3.5}. Nevertheless, our next result states this equality under weaker assumptions on the coefficients than \eqref{3.3}. Namely, we will assume additionally to \textbf{H1-H3} the following conditions.
\begin{description}
 \item[H5] Let  $u_0\in E_{s}^{2+\alpha}(\bar{\Omega}),$ $f\in E_{s-1}^{\alpha,\alpha/2}(\bar{\Omega}_{T})$ with $s>1+\alpha$.
\item[H6] Let  $a_0,a_{1}\in\C([0,T],\C^{1}(\bar{\Omega}))$. Besides, there is $\delta_{2}>0$ and the function $a(x,t)\in\C([0,T],\C^{1}(\bar{\Omega}))\cap\C_{\alpha/2}^{\alpha,\alpha/2}(\bar{\Omega}_{T})$ (including the case $a\equiv 0$) satisfying the inequality
\[
\underset{\bar{\Omega}_{T}}{\sup}\frac{|a(x,t)|}{x}\leq C,
\]
such that there holds
\[
a_{1}(x,t)-a_{0}(x,t)=\delta_{2}+a(x,t).
\]
\end{description}
\begin{theorem}\label{t3.2}
Let $\psi\in\C^{1+\alpha,\frac{1+\alpha}{2}}(\Gamma_{1,T})$. Under assumptions \textbf{H1, H3, H5} and \textbf{H6}, classical solution \eqref{1.1}-\eqref{1.3} vanishes at $x=0$ for all $t\in[0,T]$, i.e. equality \eqref{3.5} holds.
\end{theorem}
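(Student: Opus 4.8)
The plan is to combine the base regularity furnished by Theorem~\ref{t3.1} with the strong outflow structure encoded in \textbf{H6} so as to pin down the trace of the solution at the degenerate endpoint. First I would invoke Theorem~\ref{t3.1} (its hypotheses \textbf{H1}--\textbf{H3} being in force) to obtain the classical solution $u\in\C_{\alpha}^{2+\alpha,\frac{2+\alpha}{2}}(\bar{\Omega}_{T})$ together with estimate \eqref{3.2}. By Definition~\ref{d2.1} the derivative $u_x$ is then continuous up to $x=0$, while $x\,u_{xx}$ is bounded and H\"{o}lder continuous on $\bar{\Omega}_{T}$; since $u_x$ has a finite trace at the origin, an elementary integration of $u_{xx}$ over $(0,\varepsilon)$ forces $\lim_{x\to 0}x\,u_{xx}(x,t)=0$. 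From \textbf{H5} with $s>1+\alpha$ one reads off $u_{0}(x)=O(x^{s})$ and $f(x,t)=O(x^{s-1})$ near the origin, so in particular $u_{0}(0)=0$ and $f(0,t)\equiv 0$. Passing to the limit $x\to 0$ in \eqref{1.1} therefore yields the boundary identity
$$
\frac{d}{dt}u(0,t)=a_{1}(0,t)\,u_{x}(0,t)+a_{2}(0,t)\,u(0,t),\qquad u(0,0)=0,
$$
which is a linear ODE in $g(t):=u(0,t)$. By the Gronwall inequality the desired equality \eqref{3.5} is therefore \emph{equivalent} to establishing that $u_{x}(0,t)=0$ for all $t\in[0,T]$.

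The heart of the matter is thus to upgrade the generic behavior $u=O(1)$ near $x=0$ to $u=O(x^{1+\varepsilon})$ for some $\varepsilon>0$, which simultaneously yields $u(0,t)=0$ and $u_{x}(0,t)=0$. This is where \textbf{H6} is decisive. Writing $a_{1}-a_{0}=\delta_{2}+a(x,t)$ with $\delta_{2}>0$ and $|a(x,t)|/x\le C$, the Fichera function of $\mathcal{L}$ at the origin equals $a_{1}(0,t)-a_{0}(0,t)=\delta_{2}>0$ (cf. Remark~\ref{r3.1}), so no condition is admissible at $x=0$ and the solution is uniquely determined by the data on $\{t=0\}\cup\Gamma_{1,T}$. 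I would then exploit the surplus decay of the data by introducing $v=x^{-\sigma}u$ for a parameter $\sigma\in(1,\min\{s,2\})$ and deriving the equation satisfied by $v$. The zeroth–order coefficient generated by this substitution carries the factor $\sigma\bigl[a_{0}(0,t)(\sigma-1)+a_{1}(0,t)\bigr]=\sigma\bigl[a_{0}(0,t)\sigma+\delta_{2}\bigr]$, and the sign $\delta_{2}>0$ is precisely what keeps the induced singular term under control. Because the spaces $\C_{\alpha}^{k+\alpha}(\bar{\Omega}_{\varepsilon})$ coincide with the ordinary H\"{o}lder spaces on $\Omega_{\varepsilon}=(\varepsilon,l)$, I would apply the interior Schauder bound of Theorem~\ref{t3.1} to $v$ on the family $\Omega_{\varepsilon}$ and try to make the resulting constant uniform in $\varepsilon$, the transformed right–hand side still lying in $E_{s-1-\sigma}^{\alpha,\alpha/2}$ thanks to \textbf{H5}.

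The main obstacle will be closing this weighted estimate uniformly as $\varepsilon\to 0$. The difficulty is structural: with $\delta_{2}>0$ the stationary indicial equation $\beta[a_{0}(0)(\beta-1)+a_{1}(0)]=0$ has roots $0$ and $-\delta_{2}/a_{0}(0)<0$, so there is no positive indicial exponent to lean on — this is exactly why condition \textbf{H4} of Theorem~\ref{t3.1} fails for \eqref{c-1}--\eqref{c-3}, and a purely elliptic/stationary argument cannot produce the gain. The argument must therefore use the \emph{parabolic} character of the problem together with the excess vanishing $s>1+\alpha$ of the data: the plan is to derive a differential inequality for a weighted energy of $v$ in which the boundary flux at $x=\varepsilon$, of order $\varepsilon^{\,1+\delta_{2}/a_{0}(0)}$, tends to zero (again by $\delta_{2}>0$), and then to absorb the positive induced potential against the dissipative term via a Hardy-type inequality in the spirit of \eqref{ch-3-1}. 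Once the uniform bound $\|v\|_{\C_{\alpha}^{\alpha}}\le C$ on $\bar{\Omega}_{T}$ is secured, it gives $u=O(x^{\sigma})$ with $\sigma>1$, hence $u_{x}(0,t)=0$, and the boundary ODE above closes by Gronwall to deliver \eqref{3.5}.
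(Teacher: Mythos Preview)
Your opening observations are fine: Theorem~\ref{t3.1} does supply $u\in\C_{\alpha}^{2+\alpha,\frac{2+\alpha}{2}}(\bar{\Omega}_{T})$, the argument that $x\,u_{xx}(x,t)\to 0$ is correct, and the trace ODE
\[
g'(t)=a_{1}(0,t)\,u_{x}(0,t)+a_{2}(0,t)\,g(t),\qquad g(0)=0,
\]
together with $a_{1}(0,t)\ge\delta_{1}>0$ indeed shows that $u(0,t)\equiv 0$ and $u_{x}(0,t)\equiv 0$ are equivalent. The gap is in the second half. You aim for the much stronger statement $u=O(x^{\sigma})$ with $\sigma>1$, but the substitution $v=x^{-\sigma}u$ produces in the $v$-equation the singular zeroth-order term
\[
\frac{\sigma\bigl[(\sigma-1)a_{0}(x,t)+a_{1}(x,t)\bigr]}{x}\,v
=\frac{\sigma\bigl[\sigma\,a_{0}(0,t)+\delta_{2}\bigr]}{x}\,v+O(1),
\]
whose coefficient is \emph{positive}. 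In a weighted energy identity this contributes $+\int \frac{C}{x}v^{2}\,dx$ as a source, not a sink; a Hardy inequality of the form $\int x^{-1}v^{2}\le C_{H}\int x\,v_{x}^{2}$ can absorb it into the dissipation only if $\sigma[\sigma a_{0}(0,t)+\delta_{2}]$ is small relative to $C_{H}^{-1}\delta_{0}$, and nothing in \textbf{H1}, \textbf{H6} guarantees that. So the closure step ``once $\|v\|_{\C_{\alpha}^{\alpha}}\le C$ is secured'' is precisely the part that is missing and, as written, will not go through.

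The paper avoids this altogether by working with $u$ itself and exploiting \textbf{H6} in a different (and simpler) place. Multiplying \eqref{1.1} by $u$ and integrating over the short interval $(0,y)$, the boundary contribution at $x=0$ is
\[
\lim_{x\to 0}\Bigl[x a_{0}u u_{x}-\tfrac12\bigl(a_{0}+x\partial_{x}a_{0}-a_{1}\bigr)u^{2}\Bigr]
=\tfrac12\bigl(a_{1}(0,t)-a_{0}(0,t)\bigr)u^{2}(0,t)=\tfrac{\delta_{2}}{2}\,u^{2}(0,t),
\]
which lands with a \emph{positive} sign on the left of the energy identity. The boundary terms at $x=y$ are controlled pointwise by estimate \eqref{3.2} together with \textbf{H5} (this is where $s>1+\alpha$ enters, giving $|u(y,t)|\le Ct+Cy^{s}$, etc.). Choosing $t\le \tau=\varepsilon^{\star}y^{\gamma}$ with $\gamma\in(1/2,1)$ and applying Gronwall yields
\[
\int_{0}^{y}u^{2}(x,t)\,dx+\int_{0}^{t}u^{2}(0,s)\,ds\le C_{0}\,ty\,\rho(y),\qquad \rho(y)=O(y^{2\gamma-1}),
\]
first for $t\in[0,\tau]$ and then, by a step-by-step continuation in time, for all $t\in[0,T]$. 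Letting $y\to 0$ forces $\int_{0}^{t}u^{2}(0,s)\,ds=0$, hence $u(0,t)\equiv 0$. In short: \textbf{H6} is used to manufacture the good boundary trace term $\tfrac{\delta_{2}}{2}u^{2}(0,t)$ in a \emph{local} energy estimate, not to tame an induced singular potential after a weighted substitution.
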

The rest part of this section is devoted to the proof of Theorems \ref{t3.1} and \ref{t3.2}.

\subsection{Some technical results}
\label{s3.2}
\noindent In this section we provide some technical results which are key points in the proof of Theorem \ref{t3.1} and as a consequence of Theorem \ref{t3.2}.
\subsubsection{Some auxiliary estimates}
\label{s3.2.1}
\noindent
First we describe some properties of the solution to the initial-value problem for the degenerate linear equation, which will be key point in the proof of Theorem \ref{t3.1} in Section \ref{s3.3}.

Let the function $v_{1}=v_{1}(x,t)$ solves the Cauchy problem
\begin{equation}\label{3.6}
\begin{cases}
\frac{\partial v_{1}}{\partial t}-x\frac{\partial^{2}v_1}{\partial x^{2}}-A_{1}\frac{\partial v_{1}}{\partial x}=F(x,t) \quad \text{in}\quad\R_{+,T}=\R_{+}\times(0,T),\\
v_{1}(x,0)=v_{1,0}(x),\quad\text{in}\quad\R_{+},
\end{cases}
\end{equation}
where $F$ and $v_{1,0}$ are some given functions, and $A_1$ is a given positive number.

The classical solvability of problem \eqref{3.6} was studied in \cite[\S2-3]{BK}. In particularly, the following result, stated as a lemma, subsumes Theorem 3.1 and estimate \eqref{3.2} in \cite{BK}.
\begin{lemma}\label{l3.1}
Let $A_{1}$ and $v_{1,0}\in\C_{\alpha}^{2+\alpha}(\bar{\R}_{+})$, $F\in\C_{\alpha}^{\alpha,\alpha/2}(\bar{\R}_{+,T})$. Besides, we assume that there exists a positive number $r_0$ such that
\[
v_{1,0}(x)=F(x,t)=0\quad \text{if}\quad x>r_0.
\]
Then, for any fixed $T>0$, there is a unique classical solution $v_{1}(x,t)$ to problem \eqref{3.6}. In addition, the following estimate holds:
\[
\|v_{1}\|_{\C_{\alpha}^{2+\alpha,\frac{2+\alpha}{2}}(\bar{\R}_{+,T})}\leq C[\|F\|_{\C_{\alpha}^{\alpha,\alpha/2}(\bar{\R}_{+,T})}+
\|v_{1,0}\|_{\C_{\alpha}^{2+\alpha}(\bar{R}_{+})}].
\]
Here, the generic constant $C$ is independent of the right-hand side in \eqref{3.6} and depends only on $A_1$ and $T$.
\end{lemma}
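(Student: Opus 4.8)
The plan is to treat \eqref{3.6} as the model Cauchy problem for the linearly degenerate operator $\partial_t - x\partial_x^2 - A_1\partial_x$, whose first-order coefficient $A_1>0$ at the degeneracy $x=0$ satisfies the Fichera condition; consequently the problem is well posed with the initial datum alone, and no boundary condition is imposed at $x=0$. Since this is precisely the framework of Theorem~3.1 in \cite{BK}, I would follow that construction: first produce an explicit fundamental solution, then read off existence together with the weighted Schauder bound, and finally establish uniqueness by an energy argument.

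First I would remove the degeneracy by the substitution $y=2\sqrt{x}$. A direct computation gives $x\partial_x^2 + A_1\partial_x = \partial_y^2 + \tfrac{2A_1-1}{y}\partial_y$, i.e. the radial part of a Bessel-type diffusion of dimension $d=2A_1$ and index $\nu = A_1-1 > -1$. For this operator the heat kernel is explicit, of the form
$$
\Gamma_t(y,\eta) = c_{\nu}\, t^{-1}\Big(\frac{\eta}{y}\Big)^{\nu}\eta\, e^{-\frac{y^2+\eta^2}{4t}} I_{\nu}\Big(\frac{y\eta}{2t}\Big),
$$
with $I_\nu$ the modified Bessel function and $c_\nu$ a normalizing constant; transferring back to $x$ yields a kernel $G(x,\xi,t)$ for \eqref{3.6}. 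The solution is then represented by Duhamel's formula,
$$
v_1(x,t) = \int_0^{\infty} G(x,\xi,t)\,v_{1,0}(\xi)\,d\xi + \int_0^t\!\!\int_0^{\infty} G(x,\xi,t-\tau)\,F(\xi,\tau)\,d\xi\,d\tau.
$$
The compact-support hypothesis $v_{1,0}=F=0$ for $x>r_0$ confines the sources to a bounded set, which controls the integrals at infinity and localizes all subsequent estimates near the degenerate point.

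The technical core is the weighted Schauder estimate. I would differentiate the representation and establish pointwise bounds on $G$ and its derivatives respecting the parabolic scaling dictated by the degeneracy, namely $x\sim t$ near the origin (equivalently $y\sim\sqrt{t}$). The precise shape of the norms in Definition~\ref{d2.1}, with the weights $d^{\lfloor j/2\rfloor}(x)$ on $\partial_x^j$, is tailored exactly to this scaling: a function that is H\"older-$\alpha$ in the variable $y$ corresponds, after $y=2\sqrt{x}$, to one in $\C_\alpha^{\alpha,\alpha/2}$, and at the second-order level $x\partial_x^2$ matches a non-degenerate second derivative in $y$. Establishing these kernel bounds and verifying that the convolutions reproduce the full $\C_\alpha^{2+\alpha,\frac{2+\alpha}{2}}$-norm of $v_1$ in terms of the data, with a constant depending only on $A_1$ and $T$ (exactly as in \eqref{3.2}), is where the real work lies.

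Uniqueness I would obtain from a weighted energy estimate: multiplying the homogeneous equation by $v_1$ against the natural weight (the density $x^{A_1-1}$ that symmetrizes the operator) and integrating, the boundary term at $x=0$ vanishes precisely because $A_1>0$, forcing $v_1\equiv 0$. The main obstacle is the Schauder step, that is, controlling the singular kernel and matching the half-order weighted H\"older classes in $x$ with ordinary H\"older classes in $y$; this is exactly the content carried out in detail in \cite{BK}, on which I would rely.
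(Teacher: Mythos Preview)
Your outline is sound and matches the paper's treatment: the paper does not give an independent proof of this lemma but states it as a direct consequence of Theorem~3.1 and estimate~(3.2) in \cite{BK}, precisely the construction (explicit Bessel-type fundamental solution after $y=2\sqrt{x}$, Duhamel representation, weighted kernel/Schauder bounds) that you sketch. There is nothing to compare beyond noting that you have correctly identified and summarized the cited argument.
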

The next lemma allows us to reduce \eqref{1.1}-\eqref{1.3} to problem with homogenous initial data.
\begin{lemma}\label{l3.2}
There exists a universal constant $C^{*}>0$ with the following property: for any functions $\mathcal{V}_{0}=\mathcal{V}_{0}(x)\in\C_{\alpha}^{2+\alpha}(\bar{\Omega})$ and $\mathcal{V}_{1}=\mathcal{V}_{1}(x)\in\C_{\alpha}^{\alpha}(\bar{\Omega})$, there exists a function $\mathcal{V}=\mathcal{V}(x,t)\in\C_{\alpha}^{2+\alpha,\frac{2+\alpha}{2}}(\bar{\Omega}_{T})$ such that
\[
\mathcal{V}(x,0)=\mathcal{V}_{0}(x),\quad \frac{\partial\mathcal{V}}{\partial t}(x,0)=\mathcal{V}_{1}(x),
\]
and
\[
\|\mathcal{V}\|_{\C_{\alpha}^{2+\alpha,\frac{2+\alpha}{2}}(\bar{\Omega}_{T})}\leq C(\|\mathcal{V}_{0}\|_{\C_{\alpha}^{2+\alpha}(\bar{\Omega})}
+
\|\mathcal{V}_{1}\|_{\C_{\alpha}^{\alpha}(\bar{\Omega})}).
\]
\end{lemma}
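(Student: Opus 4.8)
The statement is a Cauchy-data extension lemma of the type used to reduce an inhomogeneous initial condition to a homogeneous one, so the plan is to exhibit $\mathcal{V}$ explicitly and then estimate it in the weighted parabolic norm of Definition~\ref{d2.1}. I would write $\mathcal{V}(x,t)=\mathcal{V}_{0}(x)+W(x,t)$ and handle the two summands separately. The first is immediate: since $\mathcal{V}_{0}\in\C_{\alpha}^{2+\alpha}(\bar{\Omega})$, its time-independent extension $\mathcal{V}_{0}(x)$ already lies in $\C_{\alpha}^{2+\alpha,\frac{2+\alpha}{2}}(\bar{\Omega}_{T})$ — all of its time seminorms vanish, $\partial_{t}\mathcal{V}_{0}\equiv0$, and $\|\mathcal{V}_{0}\|_{\C([0,T],\C_{\alpha}^{2+\alpha}(\bar{\Omega}))}=\|\mathcal{V}_{0}\|_{\C_{\alpha}^{2+\alpha}(\bar{\Omega})}$ — while it realizes $\mathcal{V}_{0}(x,0)=\mathcal{V}_{0}(x)$ and $\partial_{t}\mathcal{V}_{0}(x,0)=0$. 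Thus the entire content reduces to constructing $W$ with
\[
W(x,0)=0,\qquad \frac{\partial W}{\partial t}(x,0)=\mathcal{V}_{1}(x),\qquad
\|W\|_{\C_{\alpha}^{2+\alpha,\frac{2+\alpha}{2}}(\bar{\Omega}_{T})}\leq C\,\|\mathcal{V}_{1}\|_{\C_{\alpha}^{\alpha}(\bar{\Omega})}.
\]

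The difficulty is that $\mathcal{V}_{1}$ carries only $\alpha$ spatial derivatives whereas $W$ must carry $2+\alpha$, so $W$ must \emph{gain two spatial derivatives} as $t$ grows, at the sharp parabolic rate. I would produce this gain by smoothing $\mathcal{V}_{1}$ at the parabolic scale $\sqrt{t}$, using a \emph{one-sided} mollifier. Fix $\rho\in C_{c}^{\infty}(\R)$ with $\int\rho=1$ and $\supp\rho\subset[1,2]$, extend $\mathcal{V}_{1}$ across the non-degenerate endpoint $x=1$ by a standard Hölder extension, and set
\[
W(x,t)=\int_{0}^{t} (K_{\sqrt\tau}\mathcal{V}_{1})(x)\,d\tau,\qquad K_{\sigma}v(x)=\int_{1}^{2}\rho(z)\,v(x+\sigma z)\,dz .
\]
Then $W(x,0)=0$, and $\partial_{t}W(x,t)=(K_{\sqrt t}\mathcal{V}_{1})(x)\to\mathcal{V}_{1}(x)$ as $t\to0^{+}$ by continuity of $\mathcal{V}_{1}$. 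The one-sidedness is crucial near the origin: for $z\in[1,2]$ the sampling point $x+\sigma z$ stays to the right of $x$, so the smoothing window never reaches across the degenerate point $x=0$ and $\mathcal{V}_{1}$ need not be extended past it.

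The step I expect to be the main obstacle is obtaining kernel estimates that are \emph{sharp uniformly up to $x=0$}, in agreement with the weight $d(x)=x$ built into Definition~\ref{d2.1} and with the fact that the seminorm $\langle\mathcal{V}_{1}\rangle_{x,\alpha/2,\Omega}^{(\alpha)}$ itself degenerates there. The naive bound, obtained by subtracting $\mathcal{V}_{1}(x)$, produces a spurious factor $x^{-\alpha/2}$ that would make $\partial_{x}W$ blow up at the origin; the correct estimate instead uses $\int\rho^{(k)}=0$ to subtract a reference value taken \emph{inside} the sampling window $[x+\sigma,x+2\sigma]$, so that the weighted modulus of $\mathcal{V}_{1}$ is evaluated only between points at mutual distance $\lesssim\sigma$ and at distance $\gtrsim x+\sigma$ from the origin. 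This yields
\[
|\partial_{x}^{k}K_{\sigma}\mathcal{V}_{1}(x)|\leq C\,(x+\sigma)^{-\alpha/2}\,\sigma^{\alpha-k}\,\langle\mathcal{V}_{1}\rangle_{x,\alpha/2,\Omega}^{(\alpha)},\qquad k=1,2 ,
\]
together with its Hölder-in-$\sigma$ and weighted spatial-Hölder analogues. Substituting $\sigma=\sqrt\tau$ and integrating in $\tau$, the integrals converge because $\tfrac{\alpha}{2}-1>-1$ and the weight $(x+\sqrt\tau)^{-\alpha/2}$ compensates exactly the derivative loss: one obtains $\sup_{t}\|\partial_{x}W(\cdot,t)\|_{\C(\bar{\Omega})}\leq C\langle\mathcal{V}_{1}\rangle$, $\|x\,\partial_{x}^{2}W(\cdot,t)\|_{\C(\bar{\Omega})}\leq Ct^{\alpha/2}\langle\mathcal{V}_{1}\rangle$, and the mixed time seminorms $\langle\partial_{t}W\rangle_{t,0}^{(\alpha/2)}$, $\langle\partial_{x}W\rangle_{t,0}^{(\alpha/2)}$, and $\langle x\,\partial_{x}^{j}W\rangle_{t,0}^{((2+\alpha-j)/2)}$, $j=1,2$, all dominated by $\langle\mathcal{V}_{1}\rangle_{x,\alpha/2,\Omega}^{(\alpha)}$; away from the origin the weighted classes coincide with the usual parabolic Hölder spaces (as noted after Definition~\ref{d2.1}), so the remaining bounds are classical. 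The truly delicate verification is the weighted spatial Hölder seminorm $\langle x\,\partial_{x}^{2}W\rangle_{x,\alpha/2,\Omega}^{(\alpha)}$ for $x\lesssim\sqrt t$, where one must differentiate the within-window estimate once more and track the interplay of the weight $x$ with the scale $\sqrt\tau$. Collecting the bound for $W$ with the trivial contribution of $\mathcal{V}_{0}$ gives $\mathcal{V}=\mathcal{V}_{0}+W$ with the prescribed Cauchy data and the asserted estimate, the constant being universal in the claimed sense.
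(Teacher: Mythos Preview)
Your approach is plausible in principle but takes a genuinely different route from the paper's. The paper does not build $\mathcal{V}$ by explicit mollification; it \emph{solves a model Cauchy problem}. Concretely, it sets
\[
\mathcal{F}(x):=\mathcal{V}_{1}(x)-x\,\partial_{x}^{2}\mathcal{V}_{0}(x)-A_{1}\,\partial_{x}\mathcal{V}_{0}(x),
\]
extends $\mathcal{V}_{0}$ and $\mathcal{F}$ to compactly supported functions $\bar{\mathcal{V}}_{0},\bar{\mathcal{F}}$ on $\bar{\R}_{+}$ with controlled $\C_{\alpha}^{2+\alpha}$ and $\C_{\alpha}^{\alpha}$ norms (classical extension across $x=l$), and then \emph{defines} $\mathcal{V}$ to be the solution of
\[
\partial_{t}\mathcal{V}-x\,\partial_{x}^{2}\mathcal{V}-A_{1}\,\partial_{x}\mathcal{V}=\bar{\mathcal{F}}(x)\quad\text{in }\R_{+,T},\qquad \mathcal{V}(\cdot,0)=\bar{\mathcal{V}}_{0}.
\]
Lemma~\ref{l3.1} gives existence together with the $\C_{\alpha}^{2+\alpha,\frac{2+\alpha}{2}}$ estimate directly. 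The initial conditions come for free: $\mathcal{V}(x,0)=\mathcal{V}_{0}(x)$ on $\bar{\Omega}$ by construction, and evaluating the equation at $t=0$ gives $\partial_{t}\mathcal{V}(x,0)=x\,\partial_{x}^{2}\mathcal{V}_{0}+A_{1}\,\partial_{x}\mathcal{V}_{0}+\mathcal{F}=\mathcal{V}_{1}$. So the whole lemma is essentially a corollary of Lemma~\ref{l3.1}.

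Your construction instead manufactures the two missing spatial derivatives by one-sided smoothing at the parabolic scale and then verifies every piece of the weighted parabolic H\"{o}lder norm by hand. The one-sided kernel idea is the correct device near $x=0$, and the pointwise bound $|\partial_{x}^{k}K_{\sigma}\mathcal{V}_{1}(x)|\lesssim (x+\sigma)^{-\alpha/2}\sigma^{\alpha-k}$ is right. However, you explicitly leave the weighted spatial seminorm $\langle x\,\partial_{x}^{2}W\rangle_{x,\alpha/2,\Omega}^{(\alpha)}$ near the origin unverified, and that is not a throwaway step: it is exactly the place where the degenerate weight interacts nontrivially with the smoothing scale, and it is the substance of what Lemma~\ref{l3.1} (imported from \cite{BK}) encapsulates. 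The paper's approach buys all of this for free by quoting Lemma~\ref{l3.1}; your approach would be more self-contained but effectively requires reproving the hard part of that lemma from scratch.
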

\begin{proof}
Denoting
$$\mathcal{F}(x):=\mathcal{V}_{1}(x)-x\frac{\partial^{2} \mathcal{V}_{0}}{\partial x^{2}}(x)-A_1\frac{\partial\mathcal{V}_0}{\partial x},$$
we build (see \cite[Theorem 4.1]{LSU})
extensions $\bar{\mathcal{V}}_{0}$ and $\bar{\mathcal{F}}$ of
the functions $\mathcal{V}_{0}$ and $\mathcal{F}$ on $\R_{+}$ such that
\begin{equation}
\label{3.7}
\begin{aligned}
\|\bar{\mathcal{V}}_{0}\|_{\C_{\alpha}^{2+\alpha}(\bar{\R}_{+})} &\leq
C\|\mathcal{V}_{0}\|_{\C_{\alpha}^{2+\alpha}(\bar{\Omega})},\\
\|\bar{\mathcal{F}}\|_{\C_{\alpha}^{\alpha}(\bar{\R}_{+})} & \leq C
\|\mathcal{F}\|_{\C_{\alpha}^{\alpha}(\bar{\Omega})}\leq
C(\|\mathcal{V}_{0}\|_{\C_{\alpha}^{2+\alpha}(\bar{\Omega})}+\|\mathcal{V}_{1}\|_{\C_{\alpha}^{\alpha}(\bar{\Omega})}),
\end{aligned}
\end{equation}
and, besides,  the functions  $\bar{\mathcal{V}}_{0}$ and $\bar{\mathcal{F}}$ have compact
supports.
After that, we define the function $\mathcal{V}(x,t)$ to be the solution to the Cauchy problem
$$
\begin{cases}
\frac{\partial \mathcal{V}}{\partial t}-x\frac{\partial^{2}\mathcal{V}}{\partial x^{2}}-A_1\frac{\partial \mathcal{V}}{\partial x}=
\bar{\mathcal{F}}(x)\quad\text{in }
\R_{+,T},
\\
\mathcal{V}(x,0)=\bar{\mathcal{V}}_{0}(x).
\end{cases}
$$
Applying Lemma \ref{l3.1} to this problem, and taking into
account \eqref{3.7}, the claim is proven.
\end{proof}


\subsubsection{Domains. Some auxiliary propositions. }
\label{s3.2.2}
\noindent
In order to prove the first part of  Theorem \ref{t3.1}, we will construct a
\textit{regularizer} (see \cite[Section 4]{LSU}). To this end, we need a
special covering of the domain $\Omega$. We take two collections
of open sets $\{\varpi^{m}\}$ and $\{\Omega^{m}\}$, which consist
of a finite number $\varpi^{m}$ and $\Omega^{m}$ possessing the
following properties for each small number $\lambda>0$ and any
point $x^{m}\in \bar{\Omega}$:
\begin{enumerate}
    \item Denoting by $\mathfrak{B}_{r}(x^{m})$ the ball about $x^m$ of radius $r$, we have that
    $$
\varpi^{m}=\mathfrak{B}_{\lambda/2}(x^{m})\cap\bar{\Omega},\qquad
\Omega^{m}=\mathfrak{B}_{\lambda}(x^{m})\cap\bar{\Omega},
$$
and
$$
\overline{\varpi^{m}}\subset\Omega^{m}\subset\bar{\Omega},\qquad
\bigcup_{m}\varpi^{m}=\bigcup_{m}\Omega^{m}=\bar{\Omega}.
$$
    \item There exists a number $\mathcal{N}_{0}$ independent of $\lambda$
    such that the intersection of any $\mathcal{N}_{0}+1$ distinct
    $\Omega^{m}$ (and consequently any  $\mathcal{N}_{0}+1$ distinct
    $\varpi^{m}$) is empty.
\end{enumerate}

The index $m$ belongs to one of three
sets: $\mathfrak{M}$, $\mathfrak{N}_0$ or $\mathfrak{N}_1$,  where
\begin{align*}
m \in \mathfrak{M}\quad &\text{if }
\overline{\Omega^{m}}\cap\partial\Omega=\emptyset,\\
m \in\mathfrak{N}_0\quad &\text{if }
\overline{\varpi^{m}}\cap\{x=0\}\neq\emptyset,\\
m \in\mathfrak{N}_1\quad &\text{if }
\overline{\varpi^{m}}\cap\Gamma_1\neq\emptyset
.
\end{align*}
Finally, denoting $\partial\Omega^{m}=\partial\Omega\cap
\mathfrak{B}_{\lambda}(x^{m})$ and
$\Gamma_{1}^{m}=\Gamma_1\cap
\mathfrak{B}_{\lambda}(x^{m})$, we conclude that the covering $\{\varpi^{m}\}$ and
$\{\Omega^{m}\}$ define a partition of unity for the domain
$\Omega$.
Let $\xi^{m}(x):$ $\Omega\to [0,1]$ be a smooth function
such that
\begin{align*}
\xi^{m}(x)= 1 & \quad\text{if } x\in \overline{\varpi^{m}},\\
\xi^{m}(x)=0 & \quad\text{if } x\in \bar{\Omega}\backslash\overline{\Omega^{m}},\\
\xi^{m}(x)\in (0,1) & \quad \text{if } x\in \Omega^{m}\backslash\varpi^{m}.
\end{align*}
Moreover,
$$|D_{x}^{j}\xi^{m}|\leq C \lambda^{-|j|},\qquad 1\leq |j|,\quad 1\leq
\sum_{m}(\xi^{m})^{2}\leq \mathcal{N}_{0}.$$
Then, taking advantage of $\xi^{m}$, we define the function
\begin{equation}\label{3.8*}
\zeta^{m}:=\frac{\xi^{m}}{\sum_{j}(\xi^{j})^{2}}.
\end{equation}
The properties of the functions $\xi^{m}$ tell us that
 the
functions $\zeta^{m}$ vanish for $x\in \bar{\Omega}\backslash
\overline{\Omega^{m}}$; and in addition, $|D_{x}^{j}\zeta^{m}|\leq
C \lambda^{-|j|}$.
Hence, the product $\zeta^{m}\xi^{m}$ defines the partition of
unity via the formula
\begin{equation*}
\sum_{m}\zeta^{m}\xi^{m}=1.
\end{equation*}
After that,  in the spaces
$\C_{\alpha,0}^{k+\alpha,\frac{k+\alpha}{2}\nu}(\bar{\Omega}_{T}),$
$k=0,1,2,$ we introduce the norms associated with the covering
$\{\Omega^{m}\}$:
\[
\langle\langle w\rangle\rangle_{\C_{\alpha}^{k+\alpha,\frac{k+\alpha}{2}}(\bar{\Omega}_{T})}:=
\underset{m}{\sup}\|w\|_{\C_{\alpha}^{k+\alpha,\frac{k+\alpha}{2}}(\overline{\Omega^{m}}_{T})}.
\]
Repeating the arguments of Chapter 4 in \cite{LSU}, we can assert the
following.
\begin{proposition}\label{p3.1}
Let $w\in
\C_{\alpha,0}^{2+\alpha,\frac{2+\alpha}{2}}(\bar{\Omega}_{T})$. Then for $k=0,1,2,$
\begin{align*}
\|w\|_{\C([0,T],\C_{\alpha}^{k}(\bar{\Omega}))}&\leq
CT^{\frac{2+\alpha-k}{2}}\bigg[\langle\partial w/\partial t\rangle^{(\alpha/2)}_{t,0,\Omega_{T}}\\&+\bigg\langle
x \frac{\partial w}{\partial x}\bigg\rangle_{t,0,\Omega_{T}}^{(\frac{\alpha}{2})}
+\sum_{j=1}^{2}\bigg\langle
x \frac{\partial^{j} w}{\partial x^{j}}\bigg\rangle_{t,0,\Omega_{T}}^{(\frac{2+\alpha-j}{2})}
\bigg],
\end{align*}
and, besides, for $k=0,1,$ there holds
\begin{equation*}
\|w\|_{\C_{\alpha}^{k+\alpha,\frac{k+\alpha}{2}}(\bar{\Omega}_{T})}\leq
CT^{\frac{2+\alpha-k}{2}}\bigg[\langle\partial w/\partial t\rangle^{(\alpha/2)}_{t,0,\Omega_{T}}
+\sum_{j=1}^{2}\bigg\langle x
\frac{\partial^{j} w}{\partial x^{j}}\bigg\rangle_{t,0,\Omega_{T}}^{(\frac{2+\alpha-j}{2})}\bigg],
\end{equation*}
\begin{equation*}
\|\partial w/\partial t\|_{\C(\bar{\Omega}_{T})}\leq
CT^{\frac{\alpha}{2}}\langle\partial w/\partial t\rangle^{(\alpha/2)}_{t,0,\Omega_{T}},
\end{equation*}
where the positive constant $C$  does not depend on $T$.
\end{proposition}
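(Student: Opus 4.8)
The plan is to reduce every one of these bounds to a single elementary mechanism: the \emph{vanishing–initial–data} principle. For any function $g$ with $g|_{t=0}=0$, the definition of the time seminorm gives at once
\[
\|g\|_{\C(\bar{\Omega}_T)}\le T^{\gamma}\,\langle g\rangle_{t,0,\Omega_T}^{(\gamma)},
\]
and if moreover $\partial_t g|_{t=0}=0$, then writing $g(x,t)=\int_0^t\partial_\tau g(x,\tau)\,d\tau$ and applying the previous estimate to $\partial_t g$ yields
\[
\|g\|_{\C(\bar{\Omega}_T)}\le C\,T^{1+\gamma}\,\langle\partial_t g\rangle_{t,0,\Omega_T}^{(\gamma)}.
\]
I would record these two facts first, since they are the only source of the positive powers of $T$. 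Here the hypothesis $w\in\C_{\alpha,0}^{2+\alpha,\frac{2+\alpha}{2}}(\bar{\Omega}_T)$ is used through Definition~\ref{d2.3}: both $w|_{t=0}\equiv0$ and $\partial_t w|_{t=0}\equiv0$, so differentiating the identity $w(\cdot,0)\equiv0$ in $x$ shows that $\partial_x^j w(\cdot,0)\equiv0$, hence each of $x\partial_x^2 w$, $x\partial_x w$, and $\partial_t w$ vanishes at $t=0$.

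Next I would dispatch the terms that are controlled by their own top-order seminorm. The third estimate is immediate with $\gamma=\alpha/2$: $\|\partial_t w\|_{\C(\bar{\Omega}_T)}\le T^{\alpha/2}\langle\partial_t w\rangle_{t,0,\Omega_T}^{(\alpha/2)}$. For the first display with $k=0$, the second (two–vanishing) bound with $\gamma=\alpha/2$ gives $\|w\|_{\C(\bar{\Omega}_T)}\le C\,T^{\frac{2+\alpha}{2}}\langle\partial_t w\rangle_{t,0,\Omega_T}^{(\alpha/2)}$, which is the exponent $(2+\alpha-0)/2$. For the highest derivative in the $k=2$ case, $x\partial_x^2 w$ vanishes at $t=0$, so the first bound with $\gamma=\alpha/2$ gives $\|x\partial_x^2 w\|_{\C(\bar{\Omega}_T)}\le T^{\alpha/2}\langle x\partial_x^2 w\rangle_{t,0,\Omega_T}^{(\alpha/2)}$, the exponent $(2+\alpha-2)/2$. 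Since for bounded $T$ (say $T\le T_0$) a larger power of $T$ is dominated by a smaller one, these combine to the stated $\C([0,T],\C_\alpha^{k})$ bounds for $k=0,2$, the extra $T_0$–powers being absorbed into $C$; this is exactly why $C$ may be taken independent of $T$.

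The one genuinely delicate term — and what I expect to be the main obstacle — is the \emph{intermediate} first spatial derivative $\partial_x w$, i.e.\ the $k=1$ case, near the degenerate point $x=0$. Time–integration of $\partial_x w$ is not available, because it would require $\partial_x\partial_t w$, and $\partial_t w$ is only $\C_\alpha^{\alpha,\alpha/2}$, hence not differentiable in $x$; and reconstructing $\partial_x w$ spatially from $x\partial_x^2 w$ produces a logarithmic divergence at $x=0$, since $|\partial_x^2 w|\lesssim x^{-1}\|x\partial_x^2 w\|_{\C}$ is not integrable to the origin. To overcome this I would localize via the covering $\{\Omega^m\}$ of Section~\ref{s3.2.2}: on interior and right–boundary patches ($m\in\mathfrak{M}\cup\mathfrak{N}_1$) the weighted norms coincide with the ordinary parabolic H\"older norms (as noted right after Definition~\ref{d2.1}), so the standard interpolation inequalities of \cite[Ch.~2,~4]{LSU} bound $\|\partial_x w\|$ by $\|w\|^{1/2}\|x\partial_x^2 w\|^{1/2}$ plus lower order, each factor already carrying a $T$–gain from the previous paragraph; on patches abutting $x=0$ ($m\in\mathfrak{N}_0$) the weight $x$ is comparable to the patch radius $\lambda$, so a rescaling matches the model operator $\partial_t-x\partial_x^2-A_1\partial_x$ of \eqref{3.6} and the a priori estimate of Lemma~\ref{l3.1} applies, with $\langle x\partial_x w\rangle_{t,0,\Omega_T}^{((1+\alpha)/2)}$ and $\langle x\partial_x^2 w\rangle_{t,0,\Omega_T}^{(\alpha/2)}$ supplying exactly the missing control near the degeneracy. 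Summing over the finite overlap (the bound $\mathcal N_0$) gives $\|\partial_x w\|_{\C(\bar\Omega_T)}\le C\,T^{\frac{1+\alpha}{2}}[\cdots]$, i.e.\ the exponent $(2+\alpha-1)/2$.

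Finally, for the full H\"older–norm estimates (second display, $k=0,1$) I would apply the same vanishing–initial–data principle, now to the spatial H\"older seminorms: the quantities $\langle\partial_x^j w\rangle_{x,\cdot}^{(\alpha)}$ also vanish at $t=0$ and are $\tfrac{\alpha}{2}$–H\"older in time as elements of the space, so the first bound upgrades each of them by a factor $T^{\alpha/2}$; adding these to the sup bounds already obtained yields the claimed $\C_\alpha^{k+\alpha,\frac{k+\alpha}{2}}(\bar{\Omega}_T)$ estimates with the same exponent $(2+\alpha-k)/2$. Throughout, the only nonroutine point is the degenerate first–order interpolation at $x=0$ described above; everything else is bookkeeping of exponents together with the two elementary $T$–gain inequalities.
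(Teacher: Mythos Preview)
The paper gives no proof of Proposition~\ref{p3.1}; it simply writes ``Repeating the arguments of Chapter~4 in \cite{LSU}, we can assert the following.'' Your overall framework --- exploit the vanishing initial data $w|_{t=0}=\partial_t w|_{t=0}=0$ to extract powers of $T$, and interpolate for the intermediate terms --- is exactly the LSU mechanism the paper has in mind, and the easy cases ($k=0$, the third display, and the top term $x\partial_x^2 w$) are handled correctly.

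There is, however, a genuine gap in your treatment of the delicate case $\|\partial_x w\|_{\C}$ near $x=0$. Two points fail:
\begin{itemize}
\item Your claim that on patches $m\in\mathfrak{N}_0$ ``the weight $x$ is comparable to the patch radius~$\lambda$'' is false: by definition $\mathfrak{N}_0$ consists of patches whose closure meets $\{x=0\}$, so on such a patch $x$ ranges over an interval containing $0$ and is \emph{not} bounded below by a multiple of $\lambda$. (That comparability holds on $\mathfrak{M}\cup\mathfrak{N}_1$ patches, which is why the unweighted LSU interpolation works there; it is precisely the $\mathfrak{N}_0$ patch where the degeneracy bites.)
\item Lemma~\ref{l3.1} is an existence-and-estimate result for \emph{solutions} of the model Cauchy problem~\eqref{3.6}; it is not an interpolation inequality for arbitrary functions in the weighted space. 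Since $w$ in Proposition~\ref{p3.1} is not assumed to solve any equation, invoking Lemma~\ref{l3.1} as stated is circular (writing $F:=\partial_t w - x\partial_x^2 w - A_1\partial_x w$ reintroduces the very $\partial_x w$ you are trying to bound).
\end{itemize}

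What the LSU/\cite{BK} argument actually uses on the degenerate patch is a \emph{weighted} interpolation inequality adapted to the anisotropic scaling of the operator $\partial_t - x\partial_x^2$ (the scaling $x\mapsto \mu x$, $t\mapsto \mu t$ leaves the principal part invariant, and the weighted H\"older seminorms in Definition~\ref{d2.1} are built to respect it). Concretely, for fixed $t$ one writes a Taylor-type identity over an interval of length comparable to $x$ itself (not to $\lambda$), e.g.\ $h=x$, so that $\int_x^{2x}|\partial_x^2 w|\,ds\le (\ln 2)\,\|x\partial_x^2 w\|_{\C}$ stays bounded, and combines this with the $T$-gain on $\|w\|_{\C}$ and $\|x\partial_x^2 w\|_{\C}$ already obtained; the boundary value $\partial_x w(0,t)$ is handled separately via continuity and the same mechanism. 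This is the content of the interpolation lemmas proved in \cite[\S2--3]{BK} (where the estimate of Lemma~\ref{l3.1} is derived), and is what ``repeating the arguments of Chapter~4 in \cite{LSU}'' points to. Replacing your appeal to Lemma~\ref{l3.1} by this scaling-adapted Taylor argument would close the gap.
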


Next, for $\tau\in[0,T]$ and an arbitrarily given $0<\mathfrak{d}<1$, we set
\begin{equation}\label{3.8}
\tau=\lambda^{2}\mathfrak{d}.
\end{equation}
\begin{proposition}\label{p3.2}
Let \eqref{3.8} hold. Then for $k=0,1,2,$ and  any function
 $w\in \C_{\alpha,0}^{k+\alpha,\frac{k+\alpha}{2}}(\bar{\Omega}_{\tau})$, there is the following norm equivalence:
\begin{equation*}
\langle\langle w\rangle\rangle_{\C_{\alpha}^{k+\alpha,\frac{k+\alpha}{2}}(\bar{\Omega}_{\tau})}\leq
\|w\|_{\C_{\alpha}^{k+\alpha,\frac{k+\alpha}{2}}(\bar{\Omega}_{\tau})}\leq
C\langle\langle w\rangle\rangle_{\C_{\alpha}^{k+\alpha,\frac{k+\alpha}{2}}(\bar{\Omega}_{\tau})},
\end{equation*}
where the positive constant $C$ is independent of $\lambda$ and
$\tau$.
\end{proposition}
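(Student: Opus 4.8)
The plan is to prove the two inequalities separately; the left one is immediate and the right one carries the whole content. For the left inequality I would note that, for each fixed $m$, every constituent of $\|w\|_{\C_{\alpha}^{k+\alpha,\frac{k+\alpha}{2}}(\overline{\Omega^m}_{\tau})}$ — be it a weighted supremum or a spatial or temporal Hölder seminorm — is a supremum over $\overline{\Omega^m}_{\tau}\subset\bar{\Omega}_{\tau}$, hence is dominated by the corresponding constituent of the global norm $\|w\|_{\C_{\alpha}^{k+\alpha,\frac{k+\alpha}{2}}(\bar{\Omega}_{\tau})}$. Taking $\sup_m$ gives $\langle\langle w\rangle\rangle\leq\|w\|$, using neither the covering geometry nor the scaling \eqref{3.8}.

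For the right inequality I would go through the global norm term by term (Definition \ref{d2.1}) and bound each by $C\langle\langle w\rangle\rangle$. The weighted supremum terms are harmless: since $\bigcup_m\varpi^m=\bar{\Omega}$, one has $\sup_{\bar{\Omega}}(\cdot)=\max_m\sup_{\varpi^m}(\cdot)\leq\langle\langle w\rangle\rangle$. The temporal seminorms $\langle\,\cdot\,\rangle_{t,0,\Omega_{\tau}}^{(\beta)}$ are equally harmless, because they compare values at a common spatial point $x$, which lies in some $\Omega^m$; the entire difference quotient is then measured inside $\overline{\Omega^m}_{\tau}$ and is $\leq\langle\langle w\rangle\rangle$. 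The only genuine work is with the spatial Hölder seminorms $\langle d^{\lfloor j/2\rfloor}\partial_x^{j}w\rangle_{x,\alpha/2,\Omega_{\tau}}^{(\alpha)}$ (and, for $k=2$, their weighted analogues), since a pair $x,\bar{x}$ nearly realizing the supremum need not lie in one covering element.

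To handle such a seminorm I would split the admissible pairs by $|x-\bar{x}|$. If $|x-\bar{x}|<\lambda/2$, then $x\in\varpi^m$ forces $\bar{x}\in\mathfrak{B}_{\lambda}(x^m)\cap\bar{\Omega}=\Omega^m$, so both points sit in a single $\Omega^m$; as the weights $d,\underline{d}$ appearing in the seminorm are computed identically in the local and global norms, this part is literally $\leq\max_m\langle d^{\lfloor j/2\rfloor}\partial_x^{j}w\rangle_{x,\alpha/2,\Omega^m}^{(\alpha)}\leq\langle\langle w\rangle\rangle$. If $|x-\bar{x}|\geq\lambda/2$, I would bound the numerator crudely by $2\sup_{\bar{\Omega}_{\tau}}|D|$ with $D:=d^{\lfloor j/2\rfloor}\partial_x^{j}w$ and the denominator below by $(\lambda/2)^{\alpha}$, and use that the leading weight $d^{-\alpha/2}\underline{d}^{\alpha}=\underline{d}^{\alpha/2}\leq l^{\alpha/2}$ is bounded; this produces a factor $C\lambda^{-\alpha}\sup_{\bar{\Omega}_{\tau}}|D|$.

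Here the zero initial data enter decisively: since $w$ and, for $k=2$, $\partial_t w$ vanish at $t=0$ (Definition \ref{d2.3}), the quantity $D$ also vanishes at $t=0$, whence $\sup_{\bar{\Omega}_{\tau}}|D|\leq\tau^{\beta}\langle D\rangle_{t,0,\Omega_{\tau}}^{(\beta)}$, where $\beta$ is exactly the temporal exponent attached to $\partial_x^{j}w$ in Definition \ref{d2.1} — this is the mechanism quantified by Proposition \ref{p3.1}. The parabolic balance \eqref{3.8}, $\tau=\lambda^{2}\mathfrak{d}$, is tailored so that $2\beta\geq\alpha$ for every relevant term; consequently $\lambda^{-\alpha}\tau^{\beta}=\mathfrak{d}^{\beta}\lambda^{2\beta-\alpha}\leq\mathfrak{d}^{\beta}l^{2\beta-\alpha}$ stays bounded uniformly in $\lambda$ and $\tau$, and the far-range contribution is $\leq C\langle D\rangle_{t,0,\Omega_{\tau}}^{(\beta)}\leq C\langle\langle w\rangle\rangle$. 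The hard part will be precisely this far-range bookkeeping: one must verify, term by term in Definition \ref{d2.1}, that the temporal exponent $\beta$ available for each $\partial_x^{j}w$ indeed satisfies $2\beta\geq\alpha$ (so the parabolic scaling absorbs the singular factor $\lambda^{-\alpha}$, with equality reached by the top-order terms), and that the degenerate weights $d^{\lfloor j/2\rfloor}$ spoil neither the vanishing at $t=0$ nor the boundedness of the leading weight. Adding the near- and far-range estimates over all terms then yields the right inequality with a constant depending only on the structural constants, on $\mathfrak{d}$, and on $l$, but not on $\lambda$ or $\tau$.
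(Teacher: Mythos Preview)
Your proposal is correct and is precisely the standard localization argument. The paper itself gives no proof of this proposition: it is stated, together with Propositions \ref{p3.1}, \ref{p3.3} and \ref{p3.4}, under the blanket sentence ``Repeating the arguments of Chapter 4 in \cite{LSU}, we can assert the following,'' i.e., it simply defers to the classical Ladyzhenskaia--Solonnikov--Uraltseva construction. Your near/far splitting of the spatial H\"{o}lder seminorms by $|x-\bar{x}|\lessgtr\lambda/2$, combined with the zero initial data (so each $D$ can be bounded by $\tau^{\beta}$ times the appropriate temporal seminorm) and the parabolic scaling \eqref{3.8} to absorb the $\lambda^{-\alpha}$ factor, is exactly that argument, adapted to the weighted classes of Definition \ref{d2.1}; the term-by-term verification you outline goes through without obstruction.
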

\begin{proposition}\label{p3.3}
Let a function $\Phi_{m}(x)$ defined in $\Omega^{m}$ have
the property
\begin{equation*}
\bigg|\frac{\partial^{j} \Phi_{m}}{\partial x^{j}}(x)\bigg|\leq C\lambda^{-j},\quad 0\leq j\leq 2,
\end{equation*}
and the numbers $\tau$ and $\lambda$ are related via
\eqref{3.8}. Then for any function $w\in
\C_{\alpha,0}^{k+\alpha,\frac{k+\alpha}{2}}(\bar{\Omega}_{\tau})$,
$k=0,1,2$,
\[
\|\Phi_{m}w\|_{\C_{\alpha}^{k+\alpha,\frac{k+\alpha}{2}}(\bar{\Omega}^{m}_{\tau})}\leq
C\|w\|_{\C_{\alpha}^{k+\alpha,\frac{k+\alpha}{2}}(\bar{\Omega}^{m}_{\tau})},
\]
where the positive constant $C$ does not depend on $\lambda$ and
$\tau$.

Moreover, for $m\in\mathfrak{N}_{1}$ there holds
\[
C\|\Phi_{m}w\|_{\C_{\alpha}^{k+\alpha,\frac{k+\alpha}{2}}(\bar{\Omega}^{m}_{\tau})}\leq
\|w\|_{\C^{k+\alpha,\frac{k+\alpha}{2}}(\bar{\Omega}^{m}_{\tau})}\leq C
\|\Phi_{m}w\|_{\C_{\alpha}^{k+\alpha,\frac{k+\alpha}{2}}(\bar{\Omega}^{m}_{\tau})},
\]
and
\[
C\|w\|_{\C_{\alpha}^{k+\alpha,\frac{k+\alpha}{2}}(\bar{\Omega}^{m}_{\tau})}\leq
\|w\|_{\C^{k+\alpha,\frac{k+\alpha}{2}}(\bar{\Omega}^{m}_{\tau})}\leq C
\|w\|_{\C_{\alpha}^{k+\alpha,\frac{k+\alpha}{2}}(\bar{\Omega}^{m}_{\tau})}.
\]
\end{proposition}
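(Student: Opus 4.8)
The plan is to prove both assertions by expanding $\Phi_m w$ with the Leibniz rule and bounding every resulting term by the weighted norm of $w$, the decisive tool being the scaling relation \eqref{3.8}, $\tau=\lambda^{2}\mathfrak{d}$, combined with the zero initial conditions encoded in $\C_{\alpha,0}^{k+\alpha,\frac{k+\alpha}{2}}$, which through Proposition \ref{p3.1} allow one to trade differentiability of $w$ for positive powers of $\tau$. The whole difficulty is to arrange this trade so that each factor $\lambda^{-i}$ produced by a derivative of $\Phi_m$ is compensated, leaving constants free of $\lambda$ and $\tau$.

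First I would establish the multiplicative estimate. Recalling from Definition \ref{d2.1} that $\|\cdot\|_{\C_\alpha^{k+\alpha}}$ weights the $j$-th spatial derivative by $d^{\lfloor j/2\rfloor}(x)$ with $d(x)=x$ on $\Omega=(0,l)$, I would write for $0\le j\le k$
$$
d^{\lfloor j/2\rfloor}\partial_x^{j}(\Phi_m w)=\sum_{i=0}^{j}\binom{j}{i}\,d^{\,e_i}\,\partial_x^{i}\Phi_m\,\bigl(d^{\lfloor (j-i)/2\rfloor}\partial_x^{j-i}w\bigr),\qquad e_i:=\lfloor j/2\rfloor-\lfloor (j-i)/2\rfloor\ge 0.
$$
The term $i=0$ carries $\Phi_m$ with $|\Phi_m|\le C$ and reproduces exactly the $w$-contribution $d^{\lfloor j/2\rfloor}\partial_x^{j}w$, hence is directly controlled by $\|w\|_{\C_\alpha^{k+\alpha}}$. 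For a cross term $i\ge1$ the hypothesis gives $|\partial_x^{i}\Phi_m|\le C\lambda^{-i}$, while Proposition \ref{p3.1} applied on $\Omega^m_\tau$ bounds $\|d^{\lfloor (j-i)/2\rfloor}\partial_x^{j-i}w\|_{\C}$ by $C\tau^{\frac{2+\alpha-(j-i)}{2}}$ times the top seminorms of $w$. Since $d=x\le C\lambda$ on the $\mathfrak{N}_0$-cells and $d\le l$ on the remaining cells, inserting $\tau=\lambda^{2}\mathfrak{d}$ into $d^{\,e_i}\lambda^{-i}\tau^{\frac{2+\alpha-(j-i)}{2}}$ leaves a \emph{non-negative} power of $\lambda$ for every admissible pair $(i,j)$ with $i\ge1$ (the worst case being $\lambda^{\alpha}\mathfrak{d}^{(2+\alpha)/2}$ up to constants). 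Thus each cross term is bounded by $C\|w\|_{\C_\alpha^{2+\alpha,\frac{2+\alpha}{2}}(\bar\Omega^m_{\tau})}$, which settles the sup part. The $x$- and $t$-Hölder seminorm parts follow from the same decomposition together with the elementary product rule $\langle FG\rangle^{(\gamma)}\le\|F\|_\infty\langle G\rangle^{(\gamma)}+\langle F\rangle^{(\gamma)}\|G\|_\infty$ and the scaling bound $\langle\partial_x^{i}\Phi_m\rangle^{(\alpha)}\le C\lambda^{-i-\alpha}$; the extra $\lambda^{-\alpha}$ is absorbed identically, because the matching (semi)norm of $w$ is again of lower order and therefore gains a compensating power of $\tau$ from Proposition \ref{p3.1}.

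Then I would treat the refinement for $m\in\mathfrak{N}_1$. By construction these cells meet $\Gamma_1=\{x=l\}$ but not $\{x=0\}$, so for all sufficiently small $\lambda$ one has $x\ge l/2$ on $\Omega^m$; consequently every weight occurring in Definition \ref{d2.1} — the factors $d^{\lfloor j/2\rfloor}$ in the norms and $d^{-\alpha/2}\underline d^{\alpha}$ in the seminorms — is bounded above and below by constants depending only on $l$, and not on $\lambda$ or $\tau$. Hence on $\Omega^m_\tau$ the weighted space $\C_\alpha^{k+\alpha,\frac{k+\alpha}{2}}$ and the ordinary parabolic Hölder space $\C^{k+\alpha,\frac{k+\alpha}{2}}$ carry equivalent norms with $\lambda,\tau$-independent constants, which is precisely the second displayed chain. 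The first chain then follows by combining this equivalence with the multiplicative estimate just proved (for the upper bound) and with its analogue applied to $1/\Phi_m$, which on these cells obeys the same bounds $|\partial_x^{i}(1/\Phi_m)|\le C\lambda^{-i}$ because $\Phi_m$ is bounded away from zero there (for the lower bound).

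The main obstacle is concentrated entirely in the uniformity bookkeeping of the second step: one must check, case by case over $j\in\{0,1,2\}$, $1\le i\le j$, and over the sup-, $x$-Hölder- and $t$-Hölder-contributions, that the net exponent of $\lambda$ in $d^{\,e_i}\lambda^{-i}\tau^{\frac{2+\alpha-(j-i)}{2}}$ and its seminorm variants is never negative after substituting \eqref{3.8}. It is exactly here that the degeneracy weight $d=x$ on $\mathfrak{N}_0$-cells, the boundedness $x\le l$ elsewhere, and the gain from the homogeneous initial data conspire to keep all constants free of $\lambda$ and $\tau$; once this finite verification is carried out, the remainder is a routine application of the Leibniz and Hölder product rules.
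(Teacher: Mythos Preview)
The paper does not give its own proof of this proposition: all four Propositions \ref{p3.1}--\ref{p3.4} are simply asserted under the blanket sentence ``Repeating the arguments of Chapter 4 in \cite{LSU}, we can assert the following.'' Your direct argument therefore cannot be matched against an in-paper proof; the relevant comparison is with the standard Ladyzhenskaia--Solonnikov--Ural'tseva scheme, and your Leibniz-rule expansion with the $\tau=\lambda^{2}\mathfrak d$ compensation of the $\lambda^{-i}$ losses via the time-gain of Proposition \ref{p3.1} is exactly that scheme. For the multiplicative estimate and for the last displayed chain (the weighted/unweighted equivalence on $\mathfrak N_1$-cells, which follows because $d(x)=x\in[l-3\lambda/2,\,l]$ there) your argument is correct and is the intended one.

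There is, however, a genuine gap in your treatment of the \emph{first} displayed chain for $m\in\mathfrak N_1$, specifically the right inequality $\|w\|_{\C^{k+\alpha,\frac{k+\alpha}{2}}}\le C\|\Phi_m w\|_{\C_\alpha^{k+\alpha,\frac{k+\alpha}{2}}}$. You obtain it by applying the multiplicative estimate to $1/\Phi_m$, asserting that ``$\Phi_m$ is bounded away from zero there.'' Nothing in the hypotheses of the proposition gives you this: the only assumption on $\Phi_m$ is the derivative bound $|\partial_x^{j}\Phi_m|\le C\lambda^{-j}$, and in the intended applications $\Phi_m$ is one of the cut-offs $\xi^m$ or $\zeta^m$, which vanish on $\partial\Omega^m$ even for $m\in\mathfrak N_1$. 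With $\Phi_m$ allowed to vanish, the inequality you are trying to prove is simply false (take $\Phi_m\equiv 0$), so either an additional hypothesis on $\Phi_m$ is being tacitly used, or the statement is mis-transcribed in the paper; in either case you should flag the missing assumption rather than supply it silently.
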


\begin{proposition}\label{p3.4}
Let \eqref{3.8} hold and
$$w(x,t)=\sum_{m\in\mathfrak{M}\, \cup\mathfrak{N}\,_{0}\cup\mathfrak{N}\, _{1}}w^{m}(x,t),$$
where $w^{m}\in
\C_{\alpha,0}^{k+\alpha,\frac{k+\alpha}{2}}(\bar{\Omega}^{m}_{\tau})$ vanishes outside $\Omega^{m}$, 
$k=0,1,2$.
Then
\[
\langle\langle w\rangle\rangle_{\C_{\alpha}^{k+\alpha,\frac{k+\alpha}{2}}(\bar{\Omega}_{\tau})}\leq
C\underset{m\in\mathfrak{M}\,\cup\mathfrak{N}\,_{0}\cup\mathfrak{N}\,_{1}}{\sup}\|w^{m}\|_{\C_{\alpha}^{k+\alpha,\frac{k+\alpha}{2}}(\bar{\Omega}^{m}_{\tau})}.
\]
\end{proposition}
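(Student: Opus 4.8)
The plan is to reduce the estimate to two ingredients: a bound, uniform in $\lambda$ and $\tau$, on the number of patches that can meet a fixed patch, and a control of the zero-extended norm of each piece $w^{m}$ when it is measured over a \emph{neighboring} patch. The whole argument follows the regularizer scheme of \cite[Chapter 4]{LSU}, the only genuinely new point being that the degeneracy weights $d^{-s}\underline{d}^{\gamma}$ must be shown not to deteriorate across patches.

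First I would fix an index $m_{0}$ and examine $\|w\|_{\C_{\alpha}^{k+\alpha,\frac{k+\alpha}{2}}(\overline{\Omega^{m_{0}}}_{\tau})}$. Since each $w^{m}$ vanishes outside $\Omega^{m}$, on $\overline{\Omega^{m_{0}}}_{\tau}$ the sum defining $w$ reduces to the indices in $I(m_{0}):=\{m:\Omega^{m}\cap\Omega^{m_{0}}\neq\emptyset\}$. The first key step is to show $\# I(m_{0})\leq K$ with $K$ depending only on $\mathcal{N}_{0}$. This is a packing argument: if $\Omega^{m}\cap\Omega^{m_{0}}\neq\emptyset$ then $x^{m}\in\mathfrak{B}_{2\lambda}(x^{m_{0}})$, hence $\varpi^{m}\subset\mathfrak{B}_{5\lambda/2}(x^{m_{0}})$; integrating the overlap bound $\sum_{m}\mathbf{1}_{\varpi^{m}}\leq\mathcal{N}_{0}$ (a restatement of property~2 of the covering) over $\mathfrak{B}_{5\lambda/2}(x^{m_{0}})\cap\bar{\Omega}$, and using $\mes(\varpi^{m}\cap\bar{\Omega})\geq\lambda/2$ together with $\mes(\mathfrak{B}_{5\lambda/2}(x^{m_{0}})\cap\bar{\Omega})\leq 5\lambda$, bounds $\# I(m_{0})$ by a fixed multiple of $\mathcal{N}_{0}$, independent of $\lambda$ and $\tau$.

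The second and more delicate step is the per-piece bound $\|w^{m}\|_{\C_{\alpha}^{k+\alpha,\frac{k+\alpha}{2}}(\overline{\Omega^{m_{0}}}_{\tau})}\leq C\|w^{m}\|_{\C_{\alpha}^{k+\alpha,\frac{k+\alpha}{2}}(\overline{\Omega^{m}}_{\tau})}$, which is nontrivial because $\Omega^{m_{0}}\not\subseteq\Omega^{m}$. The supremum-norm parts are immediate, since $w^{m}\equiv 0$ off $\overline{\Omega^{m}}$. For the weighted H\"older seminorms I would compare values at $x,\bar{x}\in\overline{\Omega^{m_{0}}}$: if both lie in $\overline{\Omega^{m}}$ the quotient is already controlled by the $\Omega^{m}$-seminorm; if both lie outside it vanishes; and in the mixed case, using that $w^{m}$ and (for $k=1,2$) its $x$-derivatives vanish on the interior part $\partial\Omega^{m}\cap\Omega$, I would insert the crossing point $\bar{x}'\in[\,x,\bar{x}\,]\cap\partial\Omega^{m}$. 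Replacing $\bar{x}$ by $\bar{x}'$ only shrinks $|x-\bar{x}|$ and, since $\Omega=(0,l)$ forces the distance to the origin to be monotone along the segment $[\,x,\bar{x}\,]$, leaves the weights $d^{-s}\underline{d}^{\gamma}$ comparable; this reduces the mixed case to a pair inside $\overline{\Omega^{m}}$. This is the part requiring the most care, as it is precisely where the weighted factors must be verified to survive the patch crossing.

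Combining the two steps with the triangle inequality gives, for each $m_{0}$,
\[
\|w\|_{\C_{\alpha}^{k+\alpha,\frac{k+\alpha}{2}}(\overline{\Omega^{m_{0}}}_{\tau})}\leq\sum_{m\in I(m_{0})}\|w^{m}\|_{\C_{\alpha}^{k+\alpha,\frac{k+\alpha}{2}}(\overline{\Omega^{m_{0}}}_{\tau})}\leq C K\,\underset{m}{\sup}\,\|w^{m}\|_{\C_{\alpha}^{k+\alpha,\frac{k+\alpha}{2}}(\overline{\Omega^{m}}_{\tau})},
\]
and taking the supremum over $m_{0}$ on the left yields the asserted inequality, with the final constant absorbing $C K$.
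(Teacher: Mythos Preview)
The paper does not prove this proposition; it is stated (alongside Propositions~\ref{p3.1}--\ref{p3.3}) as a consequence of the regularizer arguments in \cite[Chapter~4]{LSU}, adapted to the weighted spaces of Definition~\ref{d2.1}. Your proposal is correct and is precisely that standard argument spelled out: the finite-overlap property bounds $\#I(m_{0})$ uniformly, and the only non-routine step---showing that the weighted factors $d^{-s}\underline{d}^{\gamma}$ do not deteriorate when an exterior endpoint $\bar{x}$ is replaced by the crossing point $\bar{x}'\in\partial\Omega^{m}\cap\Omega$---is handled correctly, using that $d(x)=x$ is monotone along segments in $\Omega=(0,l)$ and that $\bar{x}'>0$ forces $w^{m}$ (and its derivatives, by the local equivalence with the unweighted H\"older space) to vanish there.
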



\subsection{Proof of Theorem \ref{t3.1}}
\label{s3.3}
\noindent
It is worth noting that the second part Theorem \ref{t3.1} which concerns to estimate \eqref{3.4} is a simple consequence of \eqref{3.2} and \eqref{3.3}. Indeed, in order to verify \eqref{3.4}, it is enough to substitute the new unknown function $\mathcal{U}(x,t)=x^{-s_0}u(x,t)$ to \eqref{1.1}-\eqref{1.3} and then, taking into account assumptions \textbf{H4}, apply the first part of Theorem \ref{t3.1} and estimate \eqref{3.2}. Thus, we are left to prove that \eqref{1.1}-\eqref{1.3} has a unique solution $u\in\C_{\alpha}^{2+\alpha,\frac{2+\alpha}{2}}(\bar{\Omega}_{T})$ satisfying \eqref{3.2}. To this end, following arguments of \S 7-8 in Chapter 4 \cite{LSU}, we will construct an operator $\mathfrak{R}$ so-called a \textit{regularizer} to \eqref{1.1}-\eqref{1.3} which allow us to construct the classical solution to this problem.

First of all, using Lemma \ref{l3.2} with $\mathcal{V}_0=u_0$ and $\mathcal{V}_1=\mathcal{L}u_{0}(x)|_{t=0}+f(x,0),$ $x\in\Omega$, we reduce \eqref{1.1}-\eqref{1.3} to the problem with homogenous initial data. Thus, we look for a solution to \eqref{1.1}-\eqref{1.3} in the form
\begin{equation}\label{3.9*}
u(x,t)=\mathcal{V}(x,t)+v(x,t)
\end{equation}
where $\mathcal{V}(x,t)$ is built in Lemma \ref{l3.2}. Coming to the new unknown function $v=v(x,t)$, it solves the following initial-boundary value problem
\begin{equation}\label{3.9}
\begin{cases}
\frac{\partial v}{\partial t}-\mathcal{L}v=\bar{f}\quad\text{in}\quad \Omega_{T},\\
v(x,0)=0\quad \text{in}\quad \bar{\Omega},\\
\mathcal{M} v=\bar{\psi}\quad\text{on}\quad \Gamma_{1,T},
\end{cases}
\end{equation}
where we put
\[
\bar{f}=f(x,t)-\frac{\partial\mathcal{V}}{\partial t}+\mathcal{L}\mathcal{V},\quad
\bar{\psi}=\psi(x,t)-\mathcal{V}|_{\Gamma_{1,T}}.
\]
It is worth noting that assumptions \textbf{H1-H3} and Lemma \ref{l3.2} provide the following smoothness
\begin{equation}\label{3.10}
\bar{f}\in\C_{\alpha,0}^{\alpha,\alpha/2}(\bar{\Omega}_{T})\quad \text{and}\quad \bar{\psi}\in\C_{0}^{1+\alpha,\frac{1+\alpha}{2}}(\Gamma_{1,T}).
\end{equation}
After that, for the sake of convenience, we rewrite problem \eqref{3.9} in more compact form
\begin{equation}\label{3.11}
\mathcal{A}v=h\quad\text{with}\quad h=(\bar{f},\bar{\psi}),
\end{equation}
where $\mathcal{A}$ is the linear operator defined by the left-hand side of \eqref{3.9}. In other words, $\mathcal{A}v=\{\mathcal{A}_{0}v,\mathcal{A}_1v|_{\Gamma_{1,T}}\}$, where $\mathcal{A}_{0}$ is given by the left-hand side of the equation in \eqref{3.9} while $\mathcal{A}_{1}$ is defined by the left-hand side of \textbf{3BC} in \eqref{3.9}.

Setting now for $m\in\mathfrak{M}\cup\mathfrak{N}_{0}\cup\mathfrak{N}_1$,
\begin{align*}
&a_{i}^{m}:=a_{i}(x^{m},0),\qquad \,\, b_{j}^{m}:=b_{j}(x^{m},0),\\
&f_{m}:=\xi^{m}(x)\bar{f}(x,t),\quad \psi_{m}:=\xi^{m}(x)\bar{\psi}(x,t),
\end{align*}
(where
 $\xi^{m},$ $\mathfrak{M}$, $\mathfrak{N}_0$, $\mathfrak{N}_1$ are defined in Subsection \ref{s3.2.2}),
 we introduce the functions $V^{m}:=V^{m}(x,t)$, $m\in\mathfrak{M}\cup\mathfrak{N}_{0}\cup\mathfrak{N}_1,$ as solutions to the following problems for $\tau\in(0,T]$ satisfying \eqref{3.8}.

If $m\in\mathfrak{M}$, then
\begin{equation}\label{3.12}
\begin{cases}
 \frac{\partial V^{m}}{\partial t}-x^{m}a_{0}^{m}\frac{\partial^{2}V^{m}}{\partial x^{2}}-a_{1}^{m}\frac{\partial V^{m}}{\partial x}=f_{m}\quad \text{in}\
\R_{\tau},\\
V^{m}(x,0)=0,\quad \text{in }
\mathbb{R}.
\end{cases}
\end{equation}
Instead, if $m\in\mathfrak{N}_1$, then
$V^{m}$
solves the initial-boundary value problem:
\begin{equation}
\label{3.13}
\begin{cases}
 \frac{\partial V^{m}}{\partial t}-x^{m}a_{0}^{m}\frac{\partial^{2}V^{m}}{\partial x^{2}}-a_{1}^{m}\frac{\partial V^{m}}{\partial x}=f_{m}\quad \text{in}\
\R_{+,\tau},\\
V^{m}(x,0)=0,\quad \text{in }
\mathbb{R}_{+},\\
b_{1}^{m}\frac{\partial V^{m}}{\partial x}+b_{2}^{m}V^{m}=\psi^{m}\quad\text{in}\quad \partial\R_{+,\tau}.
\end{cases}
\end{equation}
Finally, in the case of $m\in\mathfrak{N}_0$, we define $V^{m}$ as a solution of the Cauchy problem:
\begin{equation}
\label{3.14}
\begin{cases}
 \frac{\partial V^{m}}{\partial t}-xa_{0}^{m}\frac{\partial^{2}V^{m}}{\partial x^{2}}-a_{1}^{m}\frac{\partial V^{m}}{\partial x}=f_{m}\quad \text{in}\
\R_{+,\tau},\\
V^{m}(x,0)=0\quad \text{in }
\mathbb{R}_{+}.
\end{cases}
\end{equation}
It is easy to see that problems \eqref{3.12} and \eqref{3.13} are stated for usual (non-degenerate) parabolic linear equation, while problem \eqref{3.14} contains the degenerate parabolic equation similar to \eqref{1.1} with the coefficients satisfying to the Fiker condition.

\begin{definition}\label{d3.1}
Let $\tau\in(0,T]$.
An operator $\mathfrak{R}$ is called a \textit{regularizer}, on the time-interval $[0,\tau]$, if
\[
\mathfrak{R}:
\C_{\alpha,0}^{\alpha,\frac{\alpha}{2}}(\bar{\Omega}_{\tau})\times
\C_{0}^{1+\alpha,\frac{1+\alpha}{2}}(\Gamma_{1,\tau})\to
\C_{\alpha,0}^{2+\alpha,\frac{2+\alpha}{2}}(\bar{\Omega}_{\tau}),
\]
and
\begin{equation*}
\mathfrak{R}h=\sum_{m\in\mathfrak{M}\cup\mathfrak{N}\,_{1}\cup\mathfrak{N}\,_{0}}\zeta^{m}(x)V^{m}(x,t),
\end{equation*}
where the functions $\zeta^{m}(x)$ and $V^{m}(x,t)$ are given
in \eqref{3.8*} and \eqref{3.12}-\eqref{3.14}, respectively.
\end{definition}

The operator $\mathfrak{R}$ enables us to build an inverse
operator to $\mathcal{A}$.
At this point, we state the key
lemma.
\begin{lemma}\label{l3.3}
Let $\tau\in(0,T]$ satisfy \eqref{3.8} and assume
 the hypotheses of Theorem \ref{t3.1}.
Then, setting
\[
\mathcal{H}=\C_{\alpha,0}^{\alpha,\frac{\alpha}{2}}(\bar{\Omega}_{\tau})\times
\C_{0}^{1+\alpha,\frac{1+\alpha}{2}}(\Gamma_{1,\tau}),
\]
for any
\[
h\in\mathcal{H}\qquad\text{and}\qquad v\in
\C_{\alpha,0}^{2+\alpha,\frac{2+\alpha}{2}}(\bar{\Omega}_{\tau})\]
 the following hold:
 \begin{itemize}
    \item[{\bf (i)}] $\mathfrak{R}$ is a bounded operator:
    \begin{equation*}
\|\mathfrak{R}h\|_{\C_{\alpha}^{2+\alpha,\frac{2+\alpha}{2}}(\bar{\Omega}_{\tau})}\leq
C\|h\|_{\mathcal{H}},
    \end{equation*}
    where the positive constant $C$ is independent of $\lambda$
    and $\tau$.
    \smallskip
    \item[{\bf (ii)}] There exist operators $\mathfrak{T}_{1}$ and $\mathfrak{T}_{2}$ such that
    \begin{equation}\label{3.15}
\mathcal{A}\mathfrak{R}h=h+\mathfrak{T}_{1}h,\qquad
\mathfrak{R}\mathcal{A}v=v+\mathfrak{T}_{2}v,
\end{equation}
where
\[
\|\mathfrak{T}_{1}h\|_{\mathcal{H}}\leq\frac{1}{2}\|h\|_{\mathcal{H}}\quad\text{and}\quad
 \|\mathfrak{T}_{2}v\|_{\C_{\alpha}^{2+\alpha,\frac{2+\alpha}{2}}(\bar{\Omega}_{\tau})}
\leq\frac{1}{2}\|v\|_{\C_{\alpha}^{2+\alpha,\frac{2+\alpha}{2}}(\bar{\Omega}_{\tau})}.
\]
 \end{itemize}
\end{lemma}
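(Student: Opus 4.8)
The plan is to follow the classical regularizer (parametrix) scheme of Chapter~4 in \cite{LSU}, replacing the non-degenerate interior estimate near $x=0$ by Lemma~\ref{l3.1}. To prove \textbf{(i)}, I would first estimate each local piece $V^{m}$ separately: for $m\in\mathfrak{M}$ and $m\in\mathfrak{N}_{1}$ the functions solve the constant-coefficient non-degenerate problems \eqref{3.12}, \eqref{3.13}, so the standard interior and boundary Schauder estimates apply, while for $m\in\mathfrak{N}_{0}$ the degenerate Cauchy problem \eqref{3.14} is covered by Lemma~\ref{l3.1}. In each case one obtains, uniformly in $m$,
\[
\|V^{m}\|_{\C_{\alpha}^{2+\alpha,\frac{2+\alpha}{2}}(\bar{\Omega}^{m}_{\tau})}\leq C\big(\|f_{m}\|_{\C_{\alpha}^{\alpha,\alpha/2}(\bar{\Omega}^{m}_{\tau})}+\|\psi_{m}\|_{\C^{1+\alpha,\frac{1+\alpha}{2}}(\Gamma^{m}_{1,\tau})}\big)\leq C\|h\|_{\mathcal{H}},
\]
where the last inequality uses $f_{m}=\xi^{m}\bar{f}$, $\psi_{m}=\xi^{m}\bar{\psi}$ and the bound $|D_{x}^{j}\xi^{m}|\leq C\lambda^{-|j|}$ through Proposition~\ref{p3.3}. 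Since each $\zeta^{m}$ obeys the same derivative bounds as the multiplier $\Phi_{m}$ in Proposition~\ref{p3.3}, the products $\zeta^{m}V^{m}$ satisfy the same local bound, and Proposition~\ref{p3.4} (with the finite-overlap constant $\mathcal{N}_{0}$) assembles these into $\|\mathfrak{R}h\|_{\C_{\alpha}^{2+\alpha,\frac{2+\alpha}{2}}(\bar{\Omega}_{\tau})}\leq C\|h\|_{\mathcal{H}}$, which is \textbf{(i)}.

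For \textbf{(ii)} I would compute $\mathcal{A}\mathfrak{R}h$ term by term. Writing $\mathcal{L}^{m}$ and $\mathcal{M}^{m}$ for the frozen operators defining \eqref{3.12}, \eqref{3.13}, \eqref{3.14}, the defining relations give $\mathcal{A}_{0}V^{m}=f_{m}+(\mathcal{L}^{m}-\mathcal{L})V^{m}$ and, on $\Gamma_{1}$, $\mathcal{A}_{1}V^{m}=\psi_{m}+(\mathcal{M}-\mathcal{M}^{m})V^{m}$ for $m\in\mathfrak{N}_{1}$. Applying $\mathcal{A}$ to $\mathfrak{R}h=\sum_{m}\zeta^{m}V^{m}$ and expanding the commutators $[\mathcal{A}_{0},\zeta^{m}]=-[\mathcal{L},\zeta^{m}]$ and $[\mathcal{A}_{1},\zeta^{m}]$, the leading contributions telescope through the partition identity $\sum_{m}\zeta^{m}\xi^{m}=1$, namely $\sum_{m}\zeta^{m}f_{m}=\bar{f}$ and $\sum_{m}\zeta^{m}\psi_{m}=\bar{\psi}$. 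Hence $\mathcal{A}\mathfrak{R}h=h+\mathfrak{T}_{1}h$, where $\mathfrak{T}_{1}h$ gathers exactly two families of remainders: the coefficient-freezing terms $\sum_{m}\zeta^{m}(\mathcal{L}^{m}-\mathcal{L})V^{m}$ together with their boundary analogues, and the cutoff commutators $-\sum_{m}[\mathcal{L},\zeta^{m}]V^{m}$. Each remainder inherits the vanishing initial data of $V^{m}$, so it indeed lands in $\mathcal{H}$.

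The crux of the argument, and the step I expect to be the main obstacle, is the contraction estimate $\|\mathfrak{T}_{1}h\|_{\mathcal{H}}\leq\tfrac{1}{2}\|h\|_{\mathcal{H}}$, which rests entirely on the scaling relation \eqref{3.8}, $\tau=\lambda^{2}\mathfrak{d}$. The commutator terms carry negative powers $\lambda^{-1},\lambda^{-2}$ from the derivatives of $\zeta^{m}$ but act only on derivatives of $V^{m}$ of order at most one; since $V^{m}\in\C_{\alpha,0}^{2+\alpha,\frac{2+\alpha}{2}}(\bar{\Omega}^{m}_{\tau})$ has vanishing initial data, Proposition~\ref{p3.1} bounds these lower-order norms by positive powers of $\tau$ times $\|V^{m}\|_{\C_{\alpha}^{2+\alpha,\frac{2+\alpha}{2}}}$, and the worst factors $\lambda^{-1}\tau^{(1+\alpha)/2}=\lambda^{\alpha}\mathfrak{d}^{(1+\alpha)/2}$ and $\lambda^{-2}\tau^{(2+\alpha)/2}=\lambda^{\alpha}\mathfrak{d}^{(2+\alpha)/2}$ are both small. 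The coefficient-freezing terms are more delicate, because they retain the top-order derivative $x\partial_{x}^{2}V^{m}$: the sup-norm of the coefficient difference over $\Omega^{m}_{\tau}$ is a positive power of $\lambda$ by the weighted Hölder continuity in \textbf{H1}, yet the accompanying Hölder seminorm of the coefficient is only $O(1)$, so the product must be controlled using the $\tau^{\alpha/2}$-gain of Proposition~\ref{p3.1} on the sup-norm of $x\partial_{x}^{2}V^{m}$, which again produces a positive power of $\lambda$ and of $\mathfrak{d}$. Choosing first $\mathfrak{d}$ and then $\lambda$ small enough, and passing between local and global norms through Propositions~\ref{p3.2}--\ref{p3.4} with constants independent of $\lambda,\tau$, yields the factor $\tfrac{1}{2}$.

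The second identity $\mathfrak{R}\mathcal{A}v=v+\mathfrak{T}_{2}v$ is obtained symmetrically: inserting $\sum_{m}\xi^{m}\zeta^{m}=1$ before applying $\mathfrak{R}$, one compares the local solution $W^{m}$ with data $\xi^{m}\mathcal{A}v$ to the function $\xi^{m}v$, which satisfies the frozen model problem up to commutator and freezing errors, again with zero initial data. The a priori estimate for the model problem then makes $W^{m}-\xi^{m}v$ small by the same two mechanisms, and summation against $\zeta^{m}$ produces $\mathfrak{T}_{2}v$ with $\|\mathfrak{T}_{2}v\|_{\C_{\alpha}^{2+\alpha,\frac{2+\alpha}{2}}(\bar{\Omega}_{\tau})}\leq\tfrac{1}{2}\|v\|_{\C_{\alpha}^{2+\alpha,\frac{2+\alpha}{2}}(\bar{\Omega}_{\tau})}$, which completes the proof.
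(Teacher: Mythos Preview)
Your proposal is correct and follows essentially the same approach as the paper's proof: both invoke Lemma~\ref{l3.1} for the degenerate local pieces and the standard Schauder theory from \cite{LSU} for the non-degenerate ones, assemble via Propositions~\ref{p3.2}--\ref{p3.4}, and obtain the contraction for $\mathfrak{T}_{1}$ by combining the H\"older smallness of the frozen-coefficient differences with the $\tau$-gains from Proposition~\ref{p3.1} on the commutator terms, all balanced through the scaling relation~\eqref{3.8}. The paper likewise treats only the first identity in~\eqref{3.15} in detail and refers to \cite[Chapter 4, \S7]{LSU} for the symmetric argument giving $\mathfrak{T}_{2}$, exactly as you outline.
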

\begin{proof} First, we verify statement (i) of this claim.
Simple linear changes of variables allow us to conclude that
the results of Lemma \ref{l3.1} above   and Theorem 6.1 in \cite{LSU} hold in the case of problems
\eqref{3.12}-\eqref{3.14}. Thus, collecting this results with Propositions
\ref{p3.1}-\ref{p3.4}, we arrive at the estimate
\begin{align*}
&\|\mathfrak{R}h\|_{\C_{\alpha}^{2+\alpha,\frac{2+\alpha}{2}}(\bar{\Omega}_{\tau})}
\leq
C\underset{m\in \mathfrak{M}\,\bigcup
\mathfrak{N\,}\,_{1}\bigcup \mathfrak{N\,}\,_{0}}{\sup}\|V^{m}\|_{\C_{\alpha}^{2+\alpha,\frac{2+\alpha}{2}}(\bar{\Omega}^{m}_{\tau})}
\notag\\
&\leq
C\Big[\underset{m\in \mathfrak{M}\,\bigcup
\mathfrak{N\, }\,_{1}\bigcup \mathfrak{N\,}\,_{0}}{\sup}\|\xi^{m}\bar{f}\|_{\C_{\alpha}^{\alpha,\frac{\alpha}{2}}(\bar{\Omega}^{m}_{\tau})}
+\underset{m\in\mathfrak{N\,}\,_{1}}{\sup}\|\xi^{m}\bar{\psi}\|_{\C^{1+\alpha,\frac{1+\alpha}{2}\nu}(\Gamma^{m}_{1,\tau})}
\Big]\notag\\
&\leq
C\|h\|_{\mathcal{H}},
\end{align*}
with the constant $C$ satisfying the assumptions of  the present lemma. Thus,
we complete the proof of (i).

Let us verify (ii) of this claim. Here we restrict ourself verification of the first equality in \eqref{3.15}. The second one in \eqref{3.15} is checked in the same manner with using analogous arguments from \cite[Chapter 4, $\S$ 7]{LSU}. Collecting definition of the operator $\mathcal{A}$ (see \eqref{3.11}) with the properties of $\zeta^{m}$ and $\xi^{m}$, we conclude that
\[
\mathcal{A}\mathfrak{R}h=\{\mathcal{A}_{0}\mathfrak{R}h,\mathcal{A}_{1}\mathfrak{R}h\big|_{\Gamma_{1,\tau}}\},
\]
where
\[
\mathcal{A}_{0}\mathfrak{R}h=\bar{f}+\mathfrak{T}_{1}^{1}h,\quad
\mathcal{A}_{1}\mathfrak{R}h=\bar{\psi}+\mathfrak{T}_{1}^{2}h,
\]
and
\begin{align*}
\mathfrak{T}_{1}^{1}h
&=a_{2}(x,t)\mathfrak{R}h-\sum_{m}\bigg[2xa_{0}(x,t)\frac{\partial\zeta^{m}}{\partial x}\frac{\partial V^{m}}{\partial x}
+xa_0(x,t)V^{m}\frac{\partial^{2}\zeta^{m}}{\partial x^{2}}\bigg]\\
&
-
\sum_{m\in\mathfrak{M}\,\bigcup\mathfrak{N\,}\,_{1}}\zeta^{m}[xa_0(x,t)-x^{m}a_{0}(x^{m},0)]\frac{\partial^{2}V^{m}}{\partial x^{2}}\\
&
-
\sum_{m\in\mathfrak{N\,}_{0}}\zeta^{m}x[a_0(x,t)-a_{0}(x^{m},0)]\frac{\partial^{2}V^{m}}{\partial x^{2}};
\\
\mathfrak{T}_{1}^{2}h
&=b_{2}(x,t)\mathfrak{R}h|_{\Gamma_{1,\tau}}+\sum_{m\in\mathfrak{N\,}\,_{1}}b_{1}(x,t)\frac{\partial\zeta^{m}}{\partial x}V^{m}|_{\Gamma_{1,\tau}}\\
&
+
\sum_{m\in\mathfrak{N\,}\,_{1}}\zeta^{m}[b_{1}(x,t)-b_{1}(x^{m},0)]\frac{\partial V^{m}}{\partial x}|_{\Gamma_{1,\tau}}.
\end{align*}
After that, appealing to Propositions \ref{p3.1}-\ref{p3.4}, Lemma \ref{l3.1}  above and Theorem 6.1. from \cite{LSU}, and taking into account assumptions \textbf{H2} with the following easily verified inequalities:
\[
\bigg\|x\zeta^{m}\frac{\partial^{2} V^{m}}{\partial x^{2}
}\bigg\|_{\C_{\alpha}^{\alpha,\frac{\alpha}{2}}(\bar{\Omega}^{m}_{\tau})}\leq
C[1+\mathfrak{d}^{\alpha/2}]\bigg\|x \frac{\partial^{2} V^{m}}{\partial x^{2}
}\bigg\|_{\C_{\alpha}^{\alpha,\frac{\alpha}{2}}(\bar{\Omega}^{m}_{\tau})}\, m\in\mathfrak{N}_{0},
\]
 \smallskip
\[
\bigg\|\zeta^{m}\frac{\partial^{2} V^{m}}{\partial x^{2}
}\bigg\|_{\C_{\alpha}^{\alpha,\frac{\alpha}{2}}(\bar{\Omega}^{m}_{\tau})}\leq
C[1+\mathfrak{d}^{\alpha/2}]\bigg\|\frac{\partial^{2} V^{m}}{\partial x^{2}
}\bigg\|_{\C_{\alpha}^{\alpha,\frac{\alpha}{2}}(\bar{\Omega}^{m}_{\tau})}\, m\in\mathfrak{M}\cup\mathfrak{N}_{1},
\]
 \smallskip
 \begin{align*}
\bigg\|x\frac{\partial \zeta^{m}}{\partial x}\frac{\partial
V^{m}}{\partial
x}\bigg\|_{\C_{\alpha}^{\alpha,\frac{\alpha}{2}}(\bar{\Omega}^{m}_{\tau})}&\leq
C\bigg[(\mathfrak{d}^{\frac{1+\alpha}{2}}+\mathfrak{d}^{\frac{1}{2}})\bigg\langle
x \frac{\partial
V^{m}}{\partial
x}\bigg\rangle_{t,0,\Omega^{m}_{\tau}}^{(\frac{1+\alpha}{2})}\\
&+\mathfrak{d}^{\frac{\alpha}{2}}\bigg\langle
x \frac{\partial^{2} V^{m}}{\partial x^{2}}\bigg\rangle_{t,0,\Omega^{m}_{\tau}}^{(\frac{\alpha}{2})}\bigg],
 \end{align*}
  \smallskip
 \begin{align*}
\bigg\|xV^{m}\frac{\partial^{2} \zeta^{m}}{\partial x^{2}}\bigg\|_{\C_{\alpha}^{\alpha,\frac{\alpha}{2}}(\bar{\Omega}^{m}_{\tau})}&\leq
C\bigg[(1+\lambda^{\alpha})\mathfrak{d}^{\frac{2+\alpha}{2}}
\bigg\langle
\frac{\partial V^{m}}{\partial t}\bigg\rangle_{t,0,\Omega^{m}_{\tau}}^{(\frac{\alpha}{2})}
\\&+
\mathfrak{d}^{\frac{1+\alpha}{2}}\bigg\langle x
\frac{\partial V^{m}}{\partial x}\bigg\rangle_{t,0,\Omega^{m}_{\tau}}^{(\frac{1+\alpha}{2})}
\bigg],
 \end{align*}
where the positive constant $C$
 is independent of $\lambda$ and $\tau$, we end up with the estimates:
\[
\|\mathfrak{T}_{1}^{1}h\|_{\C_{\alpha}^{\alpha,\alpha/2}(\bar{\Omega}_{\tau})}\leq \frac{1}{4}\|h\|_{\mathcal{H}},\quad
\|\mathfrak{T}_{1}^{2}h\|_{\C^{1+\alpha,\frac{1+\alpha}{2}}(\Gamma_{1,\tau})}\leq \frac{1}{4}\|h\|_{\mathcal{H}}
.
\]
In virtue of  $\mathfrak{T}_{1}h=\{\mathfrak{T}_{1}^{1}h,\mathfrak{T}_{1}^{2}h\}$, the last inequalities provide the first equality \eqref{3.15}. This finishes the proof of point (ii) and, consequence, the proof of Lemma \ref{l3.3}.
\end{proof}
Lemma \ref{l3.3} and, namely, relation \eqref{3.15} means that the operators $(I+\mathfrak{T}_1)$ and  $(I+\mathfrak{T}_2)$ (here $I$ is the identity) are invertible for a suitable small time $\tau$, and $(I+\mathfrak{T}_{j})^{-1},$ $j=1,2,$ are bounded. Hence, this arrives at the equalities
\[
\mathcal{A}\mathfrak{R}(I+\mathfrak{T}_{1})^{-1}h=h,\quad (I+\mathfrak{T}_{1})^{-1}\mathfrak{R}v=v,
\]
which imply that $\mathcal{A}$ has bounded right and left inverse operators such that
\[
\mathfrak{R}(I+\mathfrak{T}_{1})^{-1}=(I+\mathfrak{T}_{2})^{-1}\mathfrak{R}=\mathcal{A}^{-1}.
\]
Accordingly, the unique solution of \eqref{3.9} is given by
\[
v=\mathcal{A}^{-1}(\bar{f},\bar{\psi}),
\]
or returning to the function $u$ (see \eqref{3.9*}), we obtain a unique solution to the original problem \eqref{1.1}-\eqref{1.3} for $t\in[0,\tau]$:
\[
u(x,t)=\mathcal{V}(x,t)+\mathcal{A}^{-1}(\bar{f},\bar{\psi}).
\]
The estimate of the norm of $\mathcal{A}^{-1}$ follows from the corresponding  estimates in Lemma \ref{l3.3}. Finally, inequality \eqref{3.2} is a simple consequence of Lemmas \ref{l3.2} and \ref{l3.3}, and relations \eqref{3.9*}, \eqref{3.10}.

As a result, we have proved Theorem \ref{t3.1} for a small time interval $[0,\tau]$. In order to obtain this result in the general case, i.e., for $t\in[0,T]$, all we need is to extend the constructed solution on the interval $[i\tau,(i+1)\tau],$ $i=1,2,...$ while the entire $[0,T]$ is exhausted. To this end, we collect the technique from $\S 8$, Chapter 4 in \cite{LSU} with the arguments above in the case of $t\in[0,\tau]$. This allows us to have the classical solution $u$ on $[0,T]$, which satisfies inequality \eqref{3.2}. This completes the proof of Theorem \ref{t3.1}. \qed
\begin{remark}\label{r3.3}
Actually, with nonessential modifications in the arguments, the solvability of \eqref{1.1}-\eqref{1.3} in the weighted H\"{o}lder classes
 $E_{s}^{2+\alpha,\frac{2+\alpha}{2}}(\bar{\Omega}_{\tau})$, $s\in\R$, can be proved without requirement \eqref{3.3}. Indeed, as it follows from the proof of Theorem \ref{t3.1}, one should obtain the results similar to Lemma \ref{l3.1} in
 $E_{s}^{2+\alpha,\frac{2+\alpha}{2}}(\bar{\R}_{+,\tau})$. To this end, it is enough to modify arguments of Sections 2-4 in \cite{BV}.
\end{remark}


\subsection{Proof of Theorem \ref{t3.2}}
\label{s3.4}
\noindent Performing simple calculations, we deduce that
\[
\|u_{0}\|_{\C_{\alpha}^{2+\alpha}(\bar{\Omega})}\leq C\|u_{0}\|_{E_{s}^{2+\alpha}(\bar{\Omega})}\quad\text{and}\quad
\|f\|_{\C_{\alpha}^{\alpha,\alpha/2}(\bar{\Omega}_{T})}\leq C\|f\|_{E_{s}^{\alpha,\alpha/2}(\bar{\Omega}_{T})}.
\]
Thus, these relations and assumptions \textbf{H1,H3} provide the one-valued classical solvability of \eqref{1.1}-\eqref{1.3} in $\C_{\alpha}^{2+\alpha,\frac{2+\alpha}{2}}(\bar{\Omega}_{T})$. Taking into account this fact, we easily conclude that equality \eqref{3.5} follows immediately from the estimate
\begin{equation}\label{3.16}
\int\limits_{0}^{y}u^{2}(x,t)dx+
\int\limits_{0}^{t}u^{2}(0,s)ds\leq C_0 yt\rho(y)\, \text{for all}\, t\in[0,T],\, y\in(0,l^{\star}],
\end{equation}
with $l^{\star}=\min\{1/2,l\}$, and the bounded function $\rho(y)\in\C([0,l])$ satisfying the equality
\begin{equation}\label{3.17}
\rho(y)=O(y^{2\gamma-1})
\end{equation}
for some $\gamma\in(1/2,1)$ and for small $y$.
The positive constant $C_0$ in \eqref{3.16} depends only on $T, l$ and the corresponding norms of the coefficients $a_i$, $i=0,1,2,$ and the functions $u_0,$ $\psi$, $f$.

Therefore, here  we are left to verify \eqref{3.16}. To this end, we will exploit the following strategy consisting in two main steps. In this first one, we prove \eqref{3.16} for time $t\in[0,\tau]$ where
\begin{equation}\label{3.18}
\tau=\varepsilon^{\star}y^{\gamma}
\end{equation}
with some $\varepsilon^{\star}\in(0, \min\{1,T(l^{\star})^{-\gamma}\})$.

After that, we demonstrate how the obtained estimate can be extended to the whole time interval $[\tau,T]$.

\noindent\textit{Step 1:} Choosing some arbitrarily $y\in(0,l^{\star}]$, we multiply equation \eqref{1.1} by the solution $u$ and integrate over $(0,y)$. Standard calculations (as integrating by parts in the terms related with $\frac{\partial^{2}u}{\partial x^{2}}$ and $\frac{\partial u}{\partial x}$) yield
\begin{equation}\label{3.19}
\begin{aligned}
&\frac{1}{2}\frac{d}{dt}\int\limits_{0}^{y}u^{2}(x,t)dx+\int\limits_{0}^{y}xa_{0}(x,t)\big(\frac{\partial u}{\partial x}\big)^{2}dx
+\frac{\delta_{2}}{2}u^{2}(0,t)\\
&
=\int\limits_{0}^{y}\bigg[a_2(x,t)+\frac{1}{2}\frac{\partial a_{0}}{\partial x}(x,t)-\frac{1}{2}\frac{\partial a_{1}}{\partial x}(x,t)\bigg]u^{2}(x,t)dx
\\
&
+
y a_0(y,t)u(y,t)\frac{\partial u}{\partial x}(y,t)+\frac{1}{2}[a_1(y,t)-a_{2}(y,t)]u^{2}(y,t)
\\
&
-\int\limits_{0}^{y}x\frac{\partial a_0}{\partial x}(x,t)u(x,t)\frac{\partial u}{\partial x}(x,t)dx+\int\limits_{0}^{y}f(x,t)u(x,t)dx.
\end{aligned}
\end{equation}
Here, we used the facts: conditions \textbf{H6} and  the smoothness of the solution $u\in\C_{\alpha}^{2+\alpha,\frac{2+\alpha}{2}}(\bar{\Omega}_{T})$, which provide the equalities:
\begin{align*}
&\underset{x\to 0}{\lim} x a_{0}(x,t)u(x,t)\frac{\partial u}{\partial x}(x,t)=0,\\
&\underset{x\to 0}{\lim} [a_1(x,t)- a_{0}(x,t)]u^{2}(x,t)=\delta_2 u^{2}(0,t).
\end{align*}

Making use of \textbf{H1} for the second term in the left-hand side of \eqref{3.19}, and treating the two last terms in the right-hand side via Cauchy inequality, we arrive at
\begin{align*}
&\frac{1}{2}\frac{d}{dt}\int\limits_{0}^{y}u^{2}(x,t)dx+\delta_0\int\limits_{0}^{y}x\big(\frac{\partial u}{\partial x}\big)^{2}dx
+\frac{\delta_{2}}{2}u^{2}(0,t)\\
&
=\bigg[\underset{\bar{\Omega}_{T}}{\sup}\Bigg|a_2(x,t)+\frac{1}{2}\frac{\partial a_{0}}{\partial x}(x,t)-\frac{1}{2}\frac{\partial a_{1}}{\partial x}(x,t)\bigg|+\frac{1}{2}\\
&+\frac{y}{\varepsilon}\underset{\bar{\Omega}_{T}}{\sup}\Bigg|\frac{\partial a_{0}}{\partial x}(x,t)\bigg|\bigg]\int\limits_{0}^{y}u^{2}(x,t)dx
+
y\, \underset{\bar{\Omega}_{T}}{\sup}|a_0(y,t)||u(y,t)|\bigg|\frac{\partial u}{\partial x}(y,t)\bigg|
\\&+\frac{1}{2}[\delta_2+|a(y,t)|]u^{2}(y,t)
+\varepsilon\int\limits_{0}^{y}x\bigg[\frac{\partial u}{\partial x}(x,t)\bigg]^{2}dx
+\int\limits_{0}^{y}f^{2}(x,t)dx,
\end{align*}
or, choosing $\varepsilon=\frac{\delta_0}{2}$ and appealing to \textbf{H1, H5}, we have
\begin{equation}\label{3.20}
\begin{aligned}
&\frac{d}{dt}\int\limits_{0}^{y}u^{2}(x,t)dx+\int\limits_{0}^{y}x\big(\frac{\partial u}{\partial x}\big)^{2}dx
+u^{2}(0,t)\\
&
\leq C_{0}^{1}\Big\{ y|u(y,t)|\bigg|\frac{\partial u}{\partial x}(y,t)\bigg|+[1+y]u^{2}(y,t)\\&+
\|f\|^{2}_{E_{s-1}^{\alpha,\alpha/2}(\bar{\Omega}_{T})}y^{2s-1}+
\int\limits_{0}^{y}u^{2}(x,t)dx
\Big\},
\end{aligned}
\end{equation}
where the positive constant $C_{0}^{1}$ is defined with only the norms of the coefficients $a_i$ and is independent of $y$ and $\tau$.

In order to evaluate the first two terms in the right-hand side in this inequality, we appeal to estimate \eqref{3.2} and end up with the relations
\begin{align*}
&y|u(y,t)|\bigg|\frac{\partial u}{\partial x}(y,t)\bigg|\leq C_1 y\mathfrak{F}(u_0,f,\psi)\bigg[t\bigg|\frac{\partial u}{\partial t}(y,t)\bigg|+
u_0(y)\bigg]\\
&
\leq C_1 y[t+y^{s}]\mathfrak{F}_{s}^{2}(u_0,f,\psi),
\end{align*}
and
\[
u^{2}(y,t)\leq C_1 [t^{2}+y^{2s}]\mathfrak{F}^{2}_{s}(u_0,f,\psi),
\]
where we set
\[
\mathfrak{F}_{s}^{2}(u_0,f,\psi)=\bigg[\|u_0\|_{E_{s}^{2+\alpha}(\bar{\Omega})}+\|f\|_{E_{s-1}^{\alpha,\alpha/2}(\bar{\Omega}_{T})}
+\|\psi\|_{\C^{1+\alpha,\frac{1+\alpha}{2}}(\Gamma_{1,T})}
\bigg].
\]
Then, taking into account these estimates, and coming to \eqref{3.20}, we finally obtain
 \begin{equation*}
\begin{aligned}
&\frac{d}{dt}\int\limits_{0}^{y}u^{2}(x,t)dx+\int\limits_{0}^{y}x\big(\frac{\partial u}{\partial x}\big)^{2}dx
+u^{2}(0,t)\\
&
\leq C_{0}^{1}\max\{1,C_1\} \mathfrak{F}_{s}^{2}(u_0,f,\psi)[y^{2s-1}+yt+y^{s+1}+(t^{2}+y^{2s})(1+y)]\\&+
C_{0}^{1}\int\limits_{0}^{y}u^{2}(x,t)dx,
\end{aligned}
\end{equation*}
or, after integrating over $(0,t)$,
\begin{equation*}
\begin{aligned}
&\int\limits_{0}^{y}u^{2}(x,t)dx+\int\limits_{0}^{t}dz\int\limits_{0}^{y}x\big(\frac{\partial u}{\partial x}\big)^{2}dx
+\int\limits_{0}^{t}u^{2}(0,z)dz\\
&
\leq 
C_{0}^{1}\int\limits_{0}^{t}dz\int\limits_{0}^{y}u^{2}(x,z)dx+C_{0}^{1}\max\{1,C_1\} \mathfrak{F}_{s}^{2}(u_0,f,\psi)\\&
\times
(t[y^{2s-1}+yt+y^{s+1}+(t^{2}+y^{2s})(1+y)]+y^{2s+1}).
\end{aligned}
\end{equation*}
After that, the Gronwall inequality and relation \eqref{3.18} entail
\begin{equation}\label{3.21}
\int\limits_{0}^{y}u^{2}(x,t)dx+\int\limits_{0}^{t}dz\int\limits_{0}^{y}x\big(\frac{\partial u}{\partial x}\big)^{2}dx
+\int\limits_{0}^{t}u^{2}(0,z)dz\leq C_{0} t y \rho_{0}(y),
\end{equation}
where we set
\begin{equation}\label{3.21*}
\begin{aligned}
C_0&:= C_{0}^{1}\max\{1,C_1\} \mathfrak{F}_{s}^{2}(u_0,f,\psi)e^{TC_{0}^{1}},\\
\rho_{0}(y)&:=y^{2s-2}+y^{2s-1}+y^{2s}+y^{s}+2\varepsilon^{\star} y^{\gamma}+(\varepsilon^{\star})^{2}y^{2\gamma-1}.
\end{aligned}
\end{equation}
Since $1/2<\gamma<1$,  $s>1+\alpha$ and $y<1/2$, we easily conclude that $\rho_0(y)$ meets requirement \eqref{3.17}, and estimate \eqref{3.21}
leads to \eqref{3.16} if $t\in[0,\tau]$.

Moreover, inequality \eqref{3.21} and the mean value theorem provides key estimate which will be exploited in the second step of our arguments:
\begin{equation}\label{3.22}
u^{2}(0,t^{\star})\leq C_{0}y\rho_{0}(y)
\end{equation}
with some $t^{\star}\in(0,\tau)$. For simplicity consideration, we put $t^{\star}=\tau/2$.

\noindent \textit{Step 2:} At this point we proceed the technique which allows us to extend estimate \eqref{3.16} to the interval $[\tau,T]$.
First of all, we extend this estimate from $[0,\tau]$ to $[\tau,3\tau/2]$. To this end, we introduce a new variable $\bar{t}=t-\tau/2$, $\bar{t}\in[0,\tau],$ if $t\in[\tau/2,3\tau/2]$, and denote
\begin{equation*}
\begin{aligned}
 \bar{a}_{i}&:=a_{i}(x,\bar{t}+\tau/2),\,i=0,1,2,\quad\quad
\bar{b}_{j}:=b_{j}(x,\bar{t}+\tau/2),\,j=1,2,\\
\bar{\mathcal{L}}&:=x\bar{a}_{0}\frac{\partial^{2}}{\partial x^{2}}+\bar{a}_{1}\frac{\partial}{\partial x}+\bar{a}_{0},\qquad
\bar{\mathcal{M}}:=\bar{b}_{1}\frac{\partial}{\partial x}+\bar{b}_{2},\\
u&(x,\tau/2):=u_{\tau/2}(x),\qquad\qquad \bar{u}(x,\bar{t}):=u(x,\bar{t}+\tau/2),\\
 \bar{f}&(x,\bar{t}):=f(x,\bar{t}+\tau/2),\qquad\quad
\bar{\psi}(x,\bar{t}):=\psi(x,\bar{t}+\tau/2).
\end{aligned}
\end{equation*}
It is apparent that the coefficients $\bar{a}_{i},$ $\bar{b}_{j}$ and the functions $\bar{f},\bar{\psi}$ and $u_{\tau/2}(x)=\bar{u}(x,0)$ meet the requirements of Theorem \ref{t3.1} and condition \textbf{H6}, and, besides, $\bar{f}$ and $\bar{\psi}$ satisfy \textbf{H5}. Thus, recasting the arguments leading to estimate \eqref{3.21} (with  changing $u_{0}(x)$ by $u_{\tau/2}(x)$) in the case of the problem
\[
\begin{cases}
\frac{\partial\bar{u}}{\partial \bar{t}}-\bar{\mathcal{L}}\bar{u}=\bar{f}\quad\text{in}\quad \Omega_{\tau},
\\
\mathcal{M}\bar{u}=\bar{\psi}\quad \text{on}\quad\Gamma_{1,\tau},\\
\bar{u}(x,0)=u_{\tau/2}(x) \quad\text{in}\quad\bar{\Omega},
\end{cases}
\]
we conclude
\begin{equation}\label{3.23}
\int\limits_{0}^{y}\bar{u}^{2}(x,\bar{t})dx+\int\limits_{0}^{\bar{t}}dz\int\limits_{0}^{y}x\bigg(\frac{\partial \bar{u}}{\partial x}\bigg)^{2}dx+\int\limits_{0}^{\bar{t}}\bar{u}^{2}(0,z)dz\leq C_0 \bar{t}y\bar{\rho}_{0}(y).
\end{equation}
Here (in virtue of \eqref{3.22}) the function
\[
\bar{\rho}_{0}(y)=y^{2s-2}+(1+(\varepsilon^{\star})^{2})y^{2\gamma-1}+(1+2\varepsilon^{\star})y^{\gamma}+2y^{2\gamma}
\]
satisfies \eqref{3.17}. Besides, due to $y<1/2$ and $1/2<\gamma<1<s$, there holds
\[
\bar{\rho}_{0}(y)>\rho_{0}(y).
\]

Finally, collecting estimate \eqref{3.21} with \eqref{3.23} (where we come back from $\bar{u}$ to $u$), we deduce inequality \eqref{3.16} for $t\in[0,3\tau/2]$, with
$
\rho(y):=\bar{\rho}_{0}(y),
$ and $C_{0}$ defining in \eqref{3.21*}. Other words, we have extended estimate \eqref{3.16} from $[0,\tau]$ to $[\tau,3\tau/2]$. By the same token, we repeat the procedure to continue the obtain estimate on the other intervals until the entire $[0,T]$ is exhausted.
Finally, keeping in mind that $y$ was arbitrarily point in $[0,l^{\star}],$ estimate \eqref{3.16} holds for all $y\in[0,l^{\star}]$ and $t\in[0,T]$. This completes the proof of Theorem \ref{t3.2}.\qed


\subsection{Conservation Law}
\label{s4}
\noindent
In this section we discuss the properties of the solution to problem \eqref{1.1}-\eqref{1.3} which follow from Theorems \ref{t3.1} and \ref{t3.2}. Namely, we describe sufficient conditions on the given data in \eqref{1.1}-\eqref{1.3} which ensure the fulfillment of the conservation law to the solution $u\in\C_{\alpha}^{2+\alpha,\frac{2+\alpha}{2}}(\bar{\Omega}_{T})$:
\begin{equation}\label{4.1}
\frac{d}{dt}\int\limits_{\Omega}u(x,t)dx=0\quad \text{for all}\quad t\in[0,T].
\end{equation}
First, we state additional assumptions on the coefficients and the right-hand side of the model.

\noindent\textbf{H7:} Let $a_0\in\C_{\alpha}^{\alpha,\alpha/2}(\bar{\Omega}_{\tau})\cap\C([0,T],\C^{2}(\bar{\Omega})),$ and the following equalities hold:
\[
\begin{aligned}
&x\frac{\partial^{2}a_{0}}{\partial x^{2}}-\frac{\partial a_1}{\partial x}+2\frac{\partial a_{0}}{\partial x}+a_{2}=0\quad\text{for each }(x,t)\in\bar{\Omega}_{T},\\
&b_{1}(x,t)=x a_{0}(x,t),\,
b_{2}(x,t)=-a_{0}(x,t)-x\frac{\partial a_0}{\partial x}(x,t)+a_{1}(x,t) \,\text{on}\,\Gamma_{1,T},\\
&
\psi(l,t)=-\int\limits_{0}^{l}f(x,t)dx,\quad t\in[0,T].
\end{aligned}
\]
\begin{theorem}\label{t4.1}
Let $T>0$ be fixed and assumptions of Theorem \ref{t3.2} and \textbf{H7} hold. Then the classical solution $u\in\C_{\alpha}^{2+\alpha,\frac{2+\alpha}{2}}(\bar{\Omega}_{T})$ satisfies equality \eqref{4.1} for all $t\in[0,T]$.
\end{theorem}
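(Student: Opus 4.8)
The plan is to differentiate $\int_{\Omega}u(x,t)\,dx$ in time, replace $\partial u/\partial t$ by $\mathcal{L}u+f$ via equation \eqref{1.1}, and then integrate $\int_{0}^{l}\mathcal{L}u\,dx$ by parts so that the structural identity in \textbf{H7} annihilates the resulting bulk integral and only boundary contributions at $x=0$ and $x=l$ survive. Since $u\in\C_{\alpha}^{2+\alpha,\frac{2+\alpha}{2}}(\bar{\Omega}_{T})$ (guaranteed by Theorem \ref{t3.1} under the hypotheses of Theorem \ref{t3.2}), the map $t\mapsto\int_{0}^{l}u(x,t)\,dx$ is continuously differentiable and differentiation under the integral sign is legitimate, so that $\frac{d}{dt}\int_{0}^{l}u\,dx=\int_{0}^{l}(\mathcal{L}u+f)\,dx$.

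First I would integrate $\int_{0}^{l}xa_{0}u_{xx}\,dx$ by parts twice and $\int_{0}^{l}a_{1}u_{x}\,dx$ once. Collecting the interior integrands produces the factor $xa_{0,xx}+2a_{0,x}-a_{1,x}+a_{2}$ multiplying $u$; by the first equality in \textbf{H7} this factor vanishes identically on $\bar{\Omega}_{T}$, so the entire bulk integral disappears and one is left with
\[
\int_{0}^{l}\mathcal{L}u\,dx=\Big[xa_{0}u_{x}-(a_{0}+xa_{0,x})u+a_{1}u\Big]_{x=0}^{x=l}.
\]

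Next I would evaluate the boundary expression $B(x,t):=xa_{0}u_{x}-(a_{0}+xa_{0,x})u+a_{1}u$ at the two endpoints. At $x=l$, the second and third relations of \textbf{H7} identify the coefficients as $b_{1}(l,t)=la_{0}(l,t)$ and $b_{2}(l,t)=-a_{0}(l,t)-la_{0,x}(l,t)+a_{1}(l,t)$, whence $B(l,t)=b_{1}(l,t)u_{x}(l,t)+b_{2}(l,t)u(l,t)=\mathcal{M}u|_{\Gamma_{1,T}}=\psi(l,t)$ by the boundary condition \eqref{1.3}. At the degenerate endpoint $x=0$, the term $xa_{0}u_{x}$ tends to $0$ because $u_{x}$ is bounded up to $x=0$ (the first-order weight in the norm of $\C_{\alpha}^{2+\alpha}(\bar{\Omega})$ is $d^{0}=1$), while the remaining contribution reduces to $(a_{1}(0,t)-a_{0}(0,t))u(0,t)$, which vanishes by Theorem \ref{t3.2} since $u(0,t)=0$. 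Hence $B(0,t)=0$ and $\int_{0}^{l}\mathcal{L}u\,dx=\psi(l,t)$. Adding $\int_{0}^{l}f\,dx$ and invoking the flux relation $\psi(l,t)=-\int_{0}^{l}f(x,t)\,dx$ from the last line of \textbf{H7} gives $\frac{d}{dt}\int_{0}^{l}u\,dx=\psi(l,t)+\int_{0}^{l}f\,dx=0$, which is exactly \eqref{4.1}.

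The only delicate point is the justification of the vanishing boundary term at $x=0$: one must confirm both that $xa_{0}(x,t)u_{x}(x,t)\to 0$ and that $u(0,t)=0$. The first is a quantitative consequence of the weighted regularity $u\in\C_{\alpha}^{2+\alpha,\frac{2+\alpha}{2}}(\bar{\Omega}_{T})$ from Theorem \ref{t3.1}, and the second is precisely the content of Theorem \ref{t3.2} (equality \eqref{3.5}), which is why that theorem is assumed here. Everything else is a routine double integration by parts engineered to match the structural and boundary identities built into \textbf{H7}.
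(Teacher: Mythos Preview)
Your proof is correct and follows essentially the same route as the paper: differentiate the integral, substitute the equation, integrate by parts so that the first identity in \textbf{H7} kills the bulk term, and identify the surviving boundary expression at $x=l$ with $\psi(l,t)$ via \textbf{3BC} while the contribution at $x=0$ vanishes thanks to $u(0,t)=0$ from Theorem \ref{t3.2}. If anything, you spell out the $x=0$ boundary cancellation more carefully than the paper does.
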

\begin{proof}
First, we recall that Theorem \ref{t3.2} ensures the existence of  the classical solution $u\in\C_{\alpha}^{2+\alpha,\frac{2+\alpha}{2}}(\bar{\Omega}_{T})$ which vanishes at $x=0$ for all $t\in[0,T]$. Taking into account this fact, and integrating equation \eqref{1.1} over $\Omega$, we deduce
\[
\begin{aligned}
\frac{d}{dt}\int\limits_{0}^{l}u(x,t)dx&=\int\limits_{0}^{l}\Big[\frac{\partial^{2}}{\partial x^{2}}(x a_0)-\frac{\partial a_{1}}{\partial x}+a_2\Big]u(x,t)dx+\int\limits_{0}^{l}f(x,t)dx\\
&
+
\underset{x\to l}{\lim}\bigg[xa_{0}\frac{\partial u}{\partial x}-u\frac{\partial}{\partial x}(xa_{0})+a_1u\bigg].
\end{aligned}
\]
Then, taking advantage of \textbf{H7} to the terms in the right-hand side of this equality, we arrive at the desired relation \eqref{4.1}. This completes the proof of this claim.
\end{proof}
\begin{remark}\label{r4.1}
As it follows from the proof of Theorem \ref{t4.1}, the last three equalities in  \textbf{H7} can be changed by the more general condition:
\[
\int\limits_{0}^{l}f(x,t)dx+\underset{x\to l}{\lim}\bigg[x a_{0}\frac{\partial u}{\partial x}-u\frac{\partial}{\partial x}(x a_0)+a_1u\bigg]=0.
\]
\end{remark}


\subsection{A priori estimates in the Sobolev spaces}
\label{s5}
\noindent
Here we obtain a priori estimates in $L^2$ norms of solution to problem \eqref{1.1}-\eqref{1.3}.
We start with state additional assumption on the coefficients of equation \eqref{1.1}.

\noindent\textbf{H8:} We require that
\[
a_{1}(0,t)-a_{0}(0,t)\geq 0\quad\text{and}\quad \frac{\partial a_{0}}{\partial x}(0,t)<0\quad \text{for all}\quad t\in[0,T].
\]
\begin{lemma}\label{l5.1}
Let $T>0$ be fixed, assumptions \textbf{H1,H8} hold. Moreover, we assume that $a_{0}\in\C([0,T],\C^{2}(\bar{\Omega})),$ $a_{1}\in\C([0,T],\C^{1}(\bar{\Omega})),$  $b_{1}$ and $b_2$ meet requirements \textbf{H7}. If a solution $u$ of problem \eqref{1.1}-\eqref{1.2} with  homogenous condition \eqref{1.3} satisfies the equality
\begin{equation}\label{5.1}
\underset{x\to 0}{\lim} x a_{0}u\frac{\partial u}{\partial x}=0 \quad\text{for each}\quad t\in[0,T],
\end{equation}
then the following estimate is fulfilled
\begin{align*}
&\|u\|_{\C([0,T],L^{2}(\Omega))}+\|x^{1/2}\partial u/\partial x\|_{L^{2}(\Omega_{T})}+\|u(0,\cdot)\|_{L^{2}(0,T)}\\&
\leq C[\|u_{0}\|_{L^{2}(\Omega)}+\|f\|_{L^{2}(\Omega_{T})}]
\end{align*}
with the positive constant $C$ depending only on $T,$ $\Omega$ and corresponding norms of the coefficients $a_i$.
\end{lemma}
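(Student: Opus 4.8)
The plan is to establish the estimate by the standard energy method, adapted to the degeneracy at $x=0$. Since by Theorem~\ref{t3.1} the solution lies in $\C_{\alpha}^{2+\alpha,\frac{2+\alpha}{2}}(\bar{\Omega}_{T})$, all the manipulations below are legitimate. First I would multiply equation \eqref{1.1} by $u$ and integrate over $\Omega=(0,l)$. Integrating by parts the terms carrying $x a_0\frac{\partial^{2}u}{\partial x^{2}}$ and $a_1\frac{\partial u}{\partial x}$ produces the identity
\[
\frac{1}{2}\frac{d}{dt}\int_{0}^{l} u^{2}\,dx+\int_{0}^{l} x a_0\Big(\frac{\partial u}{\partial x}\Big)^{2}dx=\mathcal{B}(l,t)-\mathcal{B}(0,t)+\int_{0}^{l}\mathcal{R}(x,t)\,u^{2}\,dx+\int_{0}^{l} f u\,dx,
\]
where $\mathcal{B}(x,t)=x a_0 u\frac{\partial u}{\partial x}-\tfrac12\big(a_0+x\frac{\partial a_0}{\partial x}\big)u^{2}+\tfrac12 a_1 u^{2}$ collects the endpoint contributions and $\mathcal{R}=\frac{\partial a_0}{\partial x}+\tfrac12 x\frac{\partial^{2}a_0}{\partial x^{2}}-\tfrac12\frac{\partial a_1}{\partial x}+a_2$. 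By \textbf{H1} the principal term is coercive, $\int_{0}^{l}x a_0 u_x^{2}\,dx\geq\delta_0\int_{0}^{l}x u_x^{2}\,dx$, which will furnish the weighted gradient norm.

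Second, I would analyse the degenerate endpoint $x=0$. Assumption \eqref{5.1} annihilates the flux part $x a_0 u\frac{\partial u}{\partial x}$ in the limit $x\to 0$, so that $\mathcal{B}(0,t)=\tfrac12[a_1(0,t)-a_0(0,t)]u^{2}(0,t)$. By the sign condition in \textbf{H8} this quantity is nonnegative, hence $-\mathcal{B}(0,t)$ may be transferred to the left-hand side as a dissipative term; this is precisely the term that later yields the trace bound $\|u(0,\cdot)\|_{L^{2}(0,T)}$. The derivative condition $\frac{\partial a_0}{\partial x}(0,t)<0$ in \textbf{H8} ensures that the leading part of $\mathcal{R}$ near the origin is nonpositive, while the boundedness of $\mathcal{R}$ throughout $\bar{\Omega}_{T}$ follows from $a_0\in\C([0,T],\C^{2}(\bar{\Omega}))$, $a_1\in\C([0,T],\C^{1}(\bar{\Omega}))$ and the boundedness of $a_2$, giving $\int_{0}^{l}\mathcal{R}u^{2}\,dx\leq C\int_{0}^{l}u^{2}\,dx$.

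Third, for the non-degenerate endpoint $x=l$ I would eliminate $\frac{\partial u}{\partial x}(l,t)$ using the homogeneous boundary condition $\mathcal{M}u=0$ with $b_1=x a_0$ and $b_2=-a_0-x\frac{\partial a_0}{\partial x}+a_1$ from \textbf{H7}; a direct substitution collapses $\mathcal{B}(l,t)$ to a multiple of $u^{2}(l,t)$. Since $x a_0\geq\tfrac{l}{2}\delta_0$ on $(l/2,l)$, the equation is uniformly parabolic there, so a standard one-dimensional trace inequality gives $u^{2}(l,t)\leq\varepsilon\int_{0}^{l}x u_x^{2}\,dx+C_{\varepsilon}\int_{0}^{l}u^{2}\,dx$, and choosing $\varepsilon$ small absorbs this term into the coercive dissipation.

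Finally, bounding $\int_{0}^{l}f u\,dx\leq\tfrac12\int_{0}^{l}f^{2}\,dx+\tfrac12\int_{0}^{l}u^{2}\,dx$ and collecting the above yields a differential inequality of the form
\[
\frac{d}{dt}\int_{0}^{l}u^{2}\,dx+\delta_0\int_{0}^{l}x u_x^{2}\,dx+[a_1(0,t)-a_0(0,t)]u^{2}(0,t)\leq C\int_{0}^{l}u^{2}\,dx+\int_{0}^{l}f^{2}\,dx.
\]
Gr\"{o}nwall's inequality then produces the $\C([0,T],L^{2}(\Omega))$ bound, and integrating the differential inequality over $[0,T]$ while using that bound delivers both $\|x^{1/2}\partial u/\partial x\|_{L^{2}(\Omega_{T})}$ and $\|u(0,\cdot)\|_{L^{2}(0,T)}$. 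I expect the main obstacle to be the rigorous treatment of the degenerate endpoint $x=0$: justifying that \eqref{5.1} kills the flux term and that \textbf{H8} renders the surviving trace term nonnegative, so that $u^{2}(0,t)$ can be retained on the left rather than having to be controlled — this is exactly the interplay between the degeneracy and the Fichera-type sign conditions.
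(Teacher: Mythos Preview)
Your approach coincides with the paper's: multiply \eqref{1.1} by $u$, integrate by parts, use \textbf{H1} for coercivity of the weighted gradient term, invoke \eqref{5.1} together with the first sign condition in \textbf{H8} to render the trace term at $x=0$ dissipative, bound the interior zero-order term by the regularity of the coefficients, and close with Gr\"{o}nwall. The one point of departure is the endpoint $x=l$: after substituting the homogeneous condition $\mathcal{M}u=0$ with the specific $b_1,b_2$ from \textbf{H7}, the paper simply asserts that the resulting boundary term has the favourable sign $-L_l(u)<0$ and drops it, whereas you bound it by $C\,u^{2}(l,t)$ and absorb it via a trace inequality. Your handling is the more careful one---the paper's sign claim at $x=l$ is not transparently forced by the stated hypotheses in general (it does hold for the SIS coefficients of Section~\ref{s6})---at the minor cost of an extra interpolation step.
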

\begin{proof}
We multiply equation \eqref{1.1} by $u(x,t)$ and integrate over $\Omega=[0,l]$. Taking into account \textbf{H1}
and the smoothness of $a_i,$ we have
\[
\begin{aligned}
&\frac{1}{2}\frac{d}{dt}\int\limits_{0}^{l} u^{2}(x,t)dx+\delta_{0}\int\limits_{0}^{l}x\big(\frac{\partial u}{\partial x}\big)^{2}dx+
L_{0}(u)\\
&
\leq -L_{l}(u)+\int_{0}^{l}f^{2}dx
+\frac{1}{2}\int\limits_{0}^{l}\bigg[2-\frac{\partial a_1}{\partial x}+\frac{\partial^{2}(x a_0)}{\partial x^{2}}\bigg]u^{2}dx,
\end{aligned}
\]
where we put
\[
\begin{aligned}
L_0(u)&=\underset{x\to 0}{\lim}\bigg[xa_{0}u\frac{\partial u}{\partial x}+\frac{1}{2}\bigg(a_1-a_0-x\frac{\partial a_0}{\partial x}\bigg)u^{2}\bigg],\\
L_l(u)&=\underset{x\to l}{\lim}\bigg[-xa_{0}u\frac{\partial u}{\partial x}-\frac{1}{2}\bigg(a_1-a_0-x\frac{\partial a_0}{\partial x}\bigg)u^{2}\bigg].
\end{aligned}
\]
Assumptions on the coefficients $b_1$, $b_2$ and conditions \textbf{H8} with \eqref{5.1} provide inequalities:
\[
L_0(u)\geq 0\quad\text{and}\quad -L_{l}(u)<0.
\]
Appealing to these relations and conditions on $a_i$, we end up with the estimate
\[
\frac{d}{dt}\int\limits_{0}^{l} u^{2}(x,t)dx+\int\limits_{0}^{l}x\big(\frac{\partial u}{\partial x}\big)^{2}dx+
L_{0}(u)
\leq C\bigg[\int_{0}^{l}f^{2}dx
+\int\limits_{0}^{l}u^{2}dx\bigg],
\]
where the positive $C$ depends only on the norms of $a_i$, $i=0,1,2.$

Finally, Gronwall inequality arrives at the desired estimate.
\end{proof}
\begin{remark}\label{r5.1}
It is apparent that the classical solution  constructed in Theorem \ref{t3.1} satisfies assumption \eqref{5.1} of Lemma 5.1.
\end{remark}


\subsection{Conclusions of Theorems \ref{t3.1}-\ref{t4.1} and Lemma \ref{l5.1}}
\label{s6}
\noindent
Coming to original problem (\ref{c-1})--(\ref{c-3}), we conclude that this problem can be rewritten in the form of \eqref{1.1}-\eqref{1.3} with the following coefficients and the right-hand sides:
\[
\begin{aligned}
u(x,t)&=p(x,t),\quad f(x,t)=\psi(x,t)=0,\quad u_{0}(x)=p_{0}(x),\\
a_{0}(x,t)&=\frac{R_0+1-R_{0}x}{2N},\, a_{1}(x,t)=\frac{R_0+1-R_{0}x}{N}-x(R_0-1-R_0x),\\
a_2(x,t)&=-\frac{R_0}{N}-R_0+1+2R_{0}x,\quad b_1(x,t)=x\frac{R_0+1-R_{0}x}{2N},\\
b_2(x,t)&=-a_{0}(x,t)-x\frac{\partial a_0}{\partial x}(x,t)+a_{1}(x,t).
\end{aligned}
\]
These representations tell us that all assumptions of Theorems \ref{t3.1}, \ref{t3.2} and \ref{t4.1}, Lemma \ref{l5.1} hold, and we can assert the following.
\begin{theorem}\label{t6.1}
Let $T>0$ be fixed and  $p_0$ meet requirements of Theorem \ref{t3.1}. Then original problem (\ref{c-1})--(\ref{c-3})
 has a unique classical solution $p\in\C_{\alpha}^{2+\alpha,\frac{2+\alpha}{2}}(\bar{\Omega}_{T})$ satisfying estimates:
\[
\begin{aligned}
&\|p\|_{\C_{\alpha}^{2+\alpha,\frac{2+\alpha}{2}}(\bar{\Omega}_{T})}\leq C\|p_{0}\|_{\C_{\alpha}^{2+\alpha}(\bar{\Omega})},\\
&\|p\|_{\C([0,T],L^{2}(\Omega))}+\|\sqrt{x}\partial p/\partial x\|_{L^{2}(\Omega_{T})}+\|p(0,\cdot)\|_{L^{2}(0,T)}\leq
C\|p_0\|_{L^{2}(\Omega)}.
\end{aligned}
\]
If, in addition, $p_0$ meets assumptions of Theorem \ref{t3.2}, then the solution $p$ vanishes at $x=0$ for all $t\in[0,T]$ and the conservation law to this solution holds, i.e.
\[
\frac{d}{dt}\int\limits_{\Omega}p(x,t)dx=0 \quad\text{for all}\quad t\in[0,T].
\]
\end{theorem}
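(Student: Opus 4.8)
The plan is to obtain Theorem~\ref{t6.1} as a direct corollary of Theorems~\ref{t3.1}, \ref{t3.2}, \ref{t4.1} and Lemma~\ref{l5.1}: the entire substance lies in verifying that the coefficient identification displayed just above recasts the original problem (\ref{c-1})--(\ref{c-3}) into the abstract form (\ref{1.1})--(\ref{1.3}) and that these coefficients satisfy hypotheses \textbf{H1}--\textbf{H8}. First I would confirm the recasting itself by expanding the divergence-form operator $\frac{1}{2N}\frac{\partial^{2}}{\partial x^{2}}(f(x)p)-\frac{\partial}{\partial x}(g(x)p)$ and collecting the coefficients of $\frac{\partial^{2}p}{\partial x^{2}}$, $\frac{\partial p}{\partial x}$ and $p$; this reproduces $x a_{0}\frac{\partial^{2}}{\partial x^{2}}+a_{1}\frac{\partial}{\partial x}+a_{2}$ with the stated $a_0,a_1,a_2$, while the third-kind condition (\ref{c-2}) is rewritten at $x=1$ as $b_{1}\frac{\partial p}{\partial x}+b_{2}p=0$ with the stated $b_1,b_2$. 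Throughout one has $f=\psi=0$ and $u_0=p_0$.

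Next I would check the hypotheses needed for existence, uniqueness and regularity. Since $N\geq 1$ and $R_0\geq 0$, the coefficient $a_{0}(x,t)=\frac{R_0+1-R_0x}{2N}$ satisfies $a_0\geq \frac{1}{2N}=:\delta_0$ on $\bar{\Omega}$, while $a_{1}(0,t)=\frac{R_0+1}{N}=:\delta_1>0$ and $b_{1}(1,t)=a_0(1,t)=\frac{1}{2N}\neq 0$; as all coefficients are time-independent polynomials, they lie in the required H\"older classes, so \textbf{H1} holds. Hypothesis \textbf{H2} reduces to $p_0\in\C_{\alpha}^{2+\alpha}(\bar{\Omega})$ (with $f,\psi=0$ trivially admissible), and \textbf{H3} is precisely the requirement, built into ``$p_0$ meets requirements of Theorem~\ref{t3.1}'', that $p_0$ satisfy (\ref{c-2}) at $t=0$. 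Theorem~\ref{t3.1} then delivers the unique solution $p\in\C_{\alpha}^{2+\alpha,\frac{2+\alpha}{2}}(\bar{\Omega}_T)$ together with the first displayed estimate.

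For the vanishing at the origin I would verify \textbf{H5}--\textbf{H6} and invoke Theorem~\ref{t3.2}. With $f=0$ the requirement on $f$ is automatic, so \textbf{H5} reduces to $p_0\in E_{s}^{2+\alpha}(\bar{\Omega})$ for some $s>1+\alpha$. The key computation for \textbf{H6} is $a_{1}-a_{0}=\frac{R_0+1}{2N}+a(x,t)$, where $\delta_2=\frac{R_0+1}{2N}>0$ and the remainder $a(x,t)$ vanishes to first order at $x=0$, so that $\sup_{\bar{\Omega}_T}|a(x,t)|/x\leq C$; the smoothness demands are met since the coefficients are polynomials. Theorem~\ref{t3.2} then yields $p(0,t)=0$. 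The conservation law follows from \textbf{H7}: the boundary identities $b_1=x a_0$ and $b_2=-a_0-x\frac{\partial a_0}{\partial x}+a_1$ hold by the very definition of $b_1,b_2$, the flux relation $\psi(l,t)=-\int_0^l f\,dx$ reads $0=0$, and the single genuine identity $x\frac{\partial^{2}a_0}{\partial x^{2}}-\frac{\partial a_1}{\partial x}+2\frac{\partial a_0}{\partial x}+a_2=0$ is checked directly; Theorem~\ref{t4.1} then gives (\ref{4.1}). Finally, the $L^{2}$ estimate follows from Lemma~\ref{l5.1} once \textbf{H8} is verified, namely $a_{1}(0,t)-a_{0}(0,t)=\frac{R_0+1}{2N}\geq 0$ and $\frac{\partial a_0}{\partial x}=-\frac{R_0}{2N}$, together with the limit relation (\ref{5.1}), which holds automatically for the constructed solution by Remark~\ref{r5.1}.

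I expect the main obstacle to be essentially bookkeeping rather than conceptual: obtaining the non-divergence coefficients exactly, since an error in $a_{1}$ (in particular in its linear-in-$x$ part, arising from the $2f'\frac{\partial p}{\partial x}$ contribution) would simultaneously break the algebraic identity in \textbf{H7} and spoil the matching of the Robin condition (\ref{c-2}) with $b_1\frac{\partial p}{\partial x}+b_2 p=0$ at $x=1$. I would therefore pin down $a_1$ by cross-checking it against these two independent constraints before invoking the four results. The one borderline point worth flagging is the strict sign condition $\frac{\partial a_0}{\partial x}(0,t)<0$ in \textbf{H8}, which holds for $R_0>0$ but degenerates at $R_0=0$; I would note that in that case the corresponding boundary term in the proof of Lemma~\ref{l5.1} merely vanishes, so the $L^2$ estimate is unaffected.
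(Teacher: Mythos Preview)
Your proposal is correct and follows exactly the paper's approach: identify the coefficients $a_0,a_1,a_2,b_1,b_2$ that recast (\ref{c-1})--(\ref{c-3}) as (\ref{1.1})--(\ref{1.3}), check that \textbf{H1}--\textbf{H8} are satisfied, and read off the conclusions from Theorems~\ref{t3.1}, \ref{t3.2}, \ref{t4.1} and Lemma~\ref{l5.1}. In fact you are more explicit than the paper, which merely lists the coefficients and asserts that ``all assumptions hold''; your observation that the strict inequality $\frac{\partial a_0}{\partial x}(0,t)<0$ in \textbf{H8} degenerates at $R_0=0$ (and that the corresponding boundary term in the proof of Lemma~\ref{l5.1} simply vanishes in that case) is a genuine refinement the paper does not address.
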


\end{document}